\newtheorem{thm}{Theorem}[section]%
\newtheorem{cor}[thm]{Corollary}%
\newtheorem{prop}[thm]{Proposition}%
\newtheorem{lem}[thm]{Lemma}%
\newtheorem{defi}[thm]{Definition}%
\newtheorem{rem}[thm]{Remark}%
\newtheorem{asss}[thm]{Assertion}
\newcommand{\dE}{\mathbb{E}}
\newcommand{\dR}{\mathbb{R}}
\newcommand{\cA}{\mathcal{A}}
\newcommand{\cF}{\mathcal{F}}
\newcommand{\cH}{\mathcal{H}}
\newcommand{\cV}{\mathcal{V}}
\newcommand{\ind}{\mathds{1}}
\title{Comparison of the global dynamics for two chemostat-like models: random temporal variation versus spatial heterogeneity.}
\author{G. Lagasquie, S. Madec}
\begin{document}

\maketitle

\begin{abstract}
This article is dedicated to the study and comparison of two chemostat-like
competition models in a heterogeneous environment. The first model is a probabilistic model where we build a PDMP simulating
the effect of the temporal heterogeneity of an environment over the species in competition. Its study uses classical tools in this 
field. The second model is a gradostat-like model simulating the effect of the spatial heterogeneity of an environment over the same 
species. Despite the fact that the nature of the two models is very different, we will see that their long time behavior is globally very similar.
We define for both model quantities called invasion rates which model the growth rate of a species when
it is near to extinction. We show that the signs of these invasion rates essentially determine the long time behavior for both systems. 
In particular, we exhibit a new example of bistability between a coexistence steady state and a semi-trivial steady state. 
\end{abstract}


\section{Introduction}

The model of chemostat is a standard model for the evolution and the competition of several species for a single resource in an open environment. Its studies 
as well as that of its many variants has been widely explored since fifty years. One can read Smith and Waltman's book \cite{sw} and recent survey \cite{WADE201664} which 
give a view over the complexity and variability of this research domain. 
There are numerous applications for the chemostat.
For example, in population biology, the chemostat serves as a first
approach for the study of natural systems  . In industrial microbiology, the chemostat offers an economical production of micro-organisms.

Under various assumptions, the chemostat is known to satisfy the {\it principle of exclusive competition} which states that when 
several species compete for the same (single) resource, only one species survives, the one which makes ``best'' use of the resource (\cite{chem1,chem2,WX97,AM1980}). Though some 
natural observations and laboratory experiences support the principle of exclusive competition \cite{Hansen1491,grover2001}, the observed population diversity within 
some natural ecosystems seems to exclude it \cite{plancton1,plancton2}. In order to take account of the biological complexity without excluding the specificity of the 
chemostat, various models has been introduced (\cite{Loreau1,droop1973,Rapaport2008} for more examples).

The observed biodiversity could first be explained by the temporal fluctuations of the environment. This idea has been explored in the 
ecology literature (see for example \cite{eco1,eco2}). Applied to the chemostat, this idea gave \cite{detereco} where
the authors study the general gradostat with a periodic resource input. However, temporal fluctuations of an environment are most 
likely random. From this assumption comes the idea of studying an environment fluctuating randomly between a finite number of environments.
In \cite{lv}, the authors gives a complete study for a two-species Lotka-Volterra model of competition where the species evolve in an 
environment changing randomly between two environments and prove that coexistence is possible.


In order to take account of the biological 
complexity without excluding the specificity of the chemostat, Lovitt and Wimpenny introduced the gradostat model  which 
consists in the concatenation of various chemostats where the 
adjacent vessels are connected in both directions, \cite{grad1,grover2001}. The resource output occurs in the first and last chemostats of the chain and those 
in between exchange their contents.

The case where two species evolve in two interconnected chemostats is understood in various cases \cite{JSTW,SW2}.
See also \cite{STW,grad3,rapaport2010effects,grad5,grad4}  for more references on the general gradostat. 
The spatial heterogeneity has been also studies with partial differential equations models, see for instance \cite{sten1,sten2,HsuWalt1993} 

Some other chemostat-like model has been 
introduced to take account of the  temporal heterogeneity.  See
\cite{LENAS199461,Butler1985,detereco} with non autonomous deterministic model and in 
 \cite{CAMPILLO20112676,WANG2018341} with  stochastic models. 

In this article, we consider two species $u$ and $v$ competing for a single resource $R$. 
In a chemostat $\varepsilon$, we denote $\delta$ the common dilution rate for each species and the dilution rate of 
the resource, $R_0$ the constant input concentration of the resource in the vessel. For each species $w\in\{u,v\}$, let $f_w(R)$ be the consumption function. Thus, the {\it per capita} growth rate of the species $w$ is $f_w(R)-\delta$. Note that according to the models, $f_w$ can have different expressions. We 
choose here the most common expression for $f_w$ which is Monod's one:
\[
 f_w(R) = \frac{a_w R}{b_w +R}.
\]
where $a_w$ is the maximum growth rate for the species $w$ and $b_w$ is 'half-velocity constant' of the species $w$. 

Note $U(t)$, $V(t)$ and $R(t)$ the concentrations of the species $u$, $v$ and the resource $R$. 
The evolution of these different concentrations in the simple chemostat $\varepsilon$ is given by the equations:
\begin{equation}\label{eq:chem_simple}
  \left\{
\begin{aligned}
 &\dot{R}(t) = \delta (R_0 - R(t)) - U(t) f_u(R(t)) - V(t) f_v(R(t)) \\
 &\dot{U}(t) = U(t)\left(f_u(R(t))-\delta\right)\\
 &\dot{V}(t) = V(t)\left(f_v(R(t))-\delta\right)\\					
\end{aligned}
\right.
\end{equation}
together with the initial conditions $U(0)>0,\;V(0)>0\;R(0)\geq 0$.	
Let 
\[
 R_w^* = \begin{cases} 
	\frac{b_w \delta}{a_w - \delta} \text{ if $a_w>\delta$}\\
	+\infty \text{ if $a_w\leq\delta$},  
	\end{cases}
\]
be the concentration of resource satisfying $f_w(R_w) = \delta$ (if possible). This quantity $R_w^*$ can be interpreted as the minimal 
concentration of resource needed by the species $w$ to have its population growing. The species which needs the less resource to survive in the 
environment is the best competitor.

It is well known that the simple chemostat satisfies the principle of exclusive competition : only the best competitor 
survives. The following theorem\footnote{There exists various modification of the theorem \ref{thm:thmintro}. In 
particular, it is proven in \cite{AM1980} that the competitive exclusion principle holds true for general increasing consumption function $f_w$ verifying $f_w(0)=0$ and same dilution rates. It is yet unknown if the CEP holds true if the assumption on the dilution rates is relaxed. See \cite{WX97} for one of the last advance on this topic.} illustrates this 
statement
(see \cite{chem1,chem2}).

\begin{thm}[Competitive Exclusion Principle (CEP)]\label{thm:thmintro}
Suppose that $R_u^*<R_0$ ($u$ is able to survive) and $R_u^*<R_v^*$ ($u$ is the best competitor). The solutions of \eqref{eq:chem_simple} satisfy:
$$\underset{t\rightarrow +\infty}{\lim} (R(t),U(t),V(t)) = (R_u^*,R_0-R_u^*,0).$$
\end{thm}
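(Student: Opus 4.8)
The plan is to use the conservation structure of \eqref{eq:chem_simple} to collapse the three‑dimensional system onto a planar one, and then to run a complete phase‑plane analysis of the reduced system. First I would record the elementary facts: solutions exist for all $t\ge0$, remain bounded, and satisfy $U(t)>0$, $V(t)>0$ (the faces $\{U=0\}$ and $\{V=0\}$ are invariant by uniqueness) and $R(t)\ge0$ (since $\dot R=\delta R_0>0$ whenever $R=0$). Setting $S(t)=R(t)+U(t)+V(t)$ and using that the dilution rate $\delta$ is common to all three equations, one gets $\dot S=\delta(R_0-S)$, hence $S(t)\to R_0$ exponentially fast. Thus \eqref{eq:chem_simple} is asymptotically autonomous, its limiting system being obtained by substituting $R=R_0-U-V$:
\begin{equation}\label{eq:reduced}
\dot U=U\bigl(f_u(R_0-U-V)-\delta\bigr),\qquad \dot V=V\bigl(f_v(R_0-U-V)-\delta\bigr),
\end{equation}
to be studied on the triangle $\mathcal{T}=\{U\ge0,\ V\ge0,\ U+V\le R_0\}$.

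Next I would analyse \eqref{eq:reduced} in the plane. Its equilibria in $\mathcal{T}$ are $(0,0)$, the point $E_u=(R_0-R_u^*,0)$ (which lies strictly inside the edge $\{V=0\}$ because $R_u^*<R_0$), and, only when $R_v^*<R_0$, the point $E_v=(0,R_0-R_v^*)$; there is no interior equilibrium, since $f_u(R)=f_v(R)=\delta$ would force $R_u^*=R=R_v^*$, contradicting $R_u^*<R_v^*$. To rule out periodic orbits I would use the Dulac function $\varphi(U,V)=1/(UV)$ on the interior of $\mathcal{T}$: writing $R=R_0-U-V$,
\[
\frac{\partial}{\partial U}\!\left(\frac{f_u(R)-\delta}{V}\right)+\frac{\partial}{\partial V}\!\left(\frac{f_v(R)-\delta}{U}\right)=-\frac{f_u'(R)}{V}-\frac{f_v'(R)}{U}<0,
\]
since $f_u,f_v$ are increasing. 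On the edge $\{U+V=R_0\}$ one has $\frac{d}{dt}(U+V)=-\delta(U+V)<0$, so $\mathcal{T}$ is positively invariant and orbits enter its interior. By the Poincaré–Bendixson theorem every orbit of \eqref{eq:reduced} converges to one of the three equilibria. Linearising, $(0,0)$ has the eigenvalue $f_u(R_0)-\delta>0$ and (when it exists) $E_v$ has the eigenvalue $f_u(R_v^*)-\delta>0$ (because $R_v^*>R_u^*$), so neither can attract an orbit starting in $\{U>0,V>0\}$; while $E_u$ is a stable node, its eigenvalues being $-(R_0-R_u^*)f_u'(R_u^*)<0$ and $f_v(R_u^*)-\delta<0$. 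Hence every orbit of \eqref{eq:reduced} with $U(0),V(0)>0$ converges to $E_u$.

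Finally I would transfer this back to \eqref{eq:chem_simple}. Because $S(t)\to R_0$, the $\omega$‑limit set of a solution with $U(0),V(0)>0$ lies in $\{R+U+V=R_0\}$ and, in the $(U,V)$‑coordinates, is a nonempty compact connected set that is invariant and internally chain transitive for the limiting flow \eqref{eq:reduced}; since that flow has finitely many equilibria, no periodic orbit and no cycle of heteroclinic orbits (the only non‑sink equilibria being $(0,0)$ and $E_v$), the $\omega$‑limit set must be a single equilibrium, and the instability computed above excludes $(0,0)$ and $E_v$ exactly as before. Therefore $(U(t),V(t))\to E_u=(R_0-R_u^*,0)$, and then $R(t)=S(t)-U(t)-V(t)\to R_0-(R_0-R_u^*)=R_u^*$.

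The only genuinely delicate point is this last step — the rigorous passage from the planar limiting system to the full three‑dimensional one — which requires the theory of asymptotically autonomous semiflows (Markus, Thieme) and a verification that the instability of $(0,0)$ and of $E_v$ persists at the level of chain‑transitive limit sets, so that convergence to a boundary saddle is impossible. The remaining ingredients — positivity and boundedness, the Dulac computation, and the three linearisations — are routine. When $R_v^*\ge R_0$ (in particular when $a_v\le\delta$ and $R_v^*=+\infty$) the argument simplifies: $E_v$ does not exist and $\dot V\le0$ on $\mathcal{T}$, so $V(t)\to0$ follows almost at once.
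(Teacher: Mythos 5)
The paper does not actually prove Theorem \ref{thm:thmintro}: it is recalled as a classical result with a pointer to \cite{chem1,chem2}, and the only ingredient the paper itself uses is the reduction of Remark \ref{rem:chgt_var} (via $\Sigma\to R_0$ and Appendix F of \cite{sw}). Your argument is correct and is essentially the standard two-species phase-plane proof one finds in Smith--Waltman: conservation of $\Sigma$, passage to the limiting planar system, a Dulac function $1/(UV)$ to kill closed orbits, Poincar\'e--Bendixson, linearisation at the three boundary equilibria, and the Markus--Thieme machinery to lift the conclusion back to the asymptotically autonomous system. Two small points deserve a word more than you give them: at $(0,0)$ and at $E_v$ it is not enough that one eigenvalue is positive --- you must note that the (local) stable manifolds of these saddles are contained in the invariant coordinate axes, so they cannot be reached from the open quadrant; and the exclusion of heteroclinic cycles in the Poincar\'e--Bendixson alternative also follows from the Dulac inequality (Green's theorem on the region a separatrix polygon would bound). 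The genuine trade-off versus the cited references is that your route is intrinsically planar: Hsu-type proofs in \cite{chem1,chem2} use an explicit Lyapunov function and therefore handle $n$ competitors at once, whereas Poincar\'e--Bendixson caps you at two species. For the two-species statement as written, your proof is complete modulo the asymptotically autonomous step you correctly flag.
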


\begin{rem}\label{rem:chgt_var}
Let us write:
\[
\Sigma(t) = R(t) + U(t) + V(t).
\]
Considering that the dilution rate is the same for every species and the substrat, it is easy to see that $\Sigma$ satisfies the 
differential equation:
\[
\dot{\Sigma}(t) = \delta(R_0 - \Sigma(t)).
\]
It comes that $\Sigma(t)=R_0+e^{-\delta t}(\Sigma(0)-R_0) \underset{t\rightarrow+\infty}{\longrightarrow} R_0$. 

 Using that $\Sigma(t)\to R_0$, it is classical (see the appendix F in \cite{sw}) that the asymptotic dynamics of the system \eqref{eq:chem_simple} is given by the dynamics of
the reducted system
 \begin{equation}\label{eq:chem_simple_reducted}
  \left\{
\begin{aligned}
 &\dot{U}(t) = U(t)\left(f_u(R_0-U(t)-V(t))-\delta\right)\\
 &\dot{V}(t) = V(t)\left(f_v(R_0-U(t)-V(t))-\delta\right)\\					
\end{aligned}
\right.
\end{equation}

Hence, assuming that the dilution rates are the same for every
species and the resource is a very strong hypothesis allows  to do the variable change $R(t) = R_0-U(t)-V(t)$. This is the key ingredient in \cite{AM1980} to prove the CEP for general increasing consumption functions and same dilution rates.
\end{rem}

In this paper, we consider two chemostats $\varepsilon^1$ and $\varepsilon^2$.
For $j\in\{1,2\}$, the parameters of the chemostat $\varepsilon^j$ are denoted $(R_0^{j},\delta^j,a_u^j,a_v^j,b_u^j,b_v^j)$. In all the article, the subscripts of a parameter or a variable make always reference to the species and the exponents make always reference to the environment. For a species $w\in\{u,v\}$, we set $\overline{w}\in\{u,v\}\setminus\{w\}$ the other species. With these two chemostats, we build two competition models.

The first model is a probabilistic one. In this model the chemostat where the two species and the resource evolve is alternating randomly
between $\varepsilon^1$ and $\varepsilon^2$. Assuming that the species and resource lives in $\varepsilon^1$ at $t=0$,
we wait a random exponential time of parameter $\lambda^1$ before switching the chemostat to $\varepsilon^2$. Then, we wait an other independent random exponential time of parameter $\lambda^2$ before switching back to $\varepsilon^1$, and so on.

The goal here is to model time
variations of the environment the species and resource evolve in. Mathematically, we build here a random process which study is totally different
from the gradostat model. In \cite{lv}, the authors study a similar process for a Lotka-Volterra competition model and we claim that it is possible
to adapt their techniques to the slightly more difficult chemostat switching competition model.

The second model is  a gradostat-like model where the two chemostats 
$\varepsilon^1$ and $\varepsilon^2$ are connected and trade their 
content at 
a certain rate $\lambda$. 
Mathematically, this model is a system of $3\times2$ differential equations which modelizes spatial heterogeneity in a 
biosystem (see \cite{grad1} for some mathematical results on the behavior of such system).

The goal of this article is to compare the long time behavior of the dynamics of these two different systems. {\em For each model }we give a 
mathematical definition for what we will call {\it the invasion rate of the species}, noted $\Lambda_w$ for the species $w$ in the 
probabilistic case\footnote{ In the deterministic case the invasion rate of the species $w$ is note $\Gamma_w$. However, we only refer to 
$\Lambda_w$ in this introduction.}. Given the mathematical difference between the two models, the definition of these invasion rates is different for each model.
However, we show that for each model, the sign of $\Lambda_u$ and $\Lambda_v$ essentially determines the 
state of the system at the equilibrium, and thus the long time dynamics. The precise results are state in the section \ref{section:temporal} for the probabilistic model 
and in the section \ref{section:spatial} for the deterministic model.

We show (under an additional assumption for the probabilistic case) that, if $\Lambda_u\Lambda_v> 0$, then for any positive initial condition only the two following behavior can happen for the two models. 

\begin{itemize}
 
 \item If $\Lambda_u<0$ and $\Lambda_v<0$ there is extinction of either species $u$ or species $v$  This configuration will be called
 the bi-stability.
 \item If $\Lambda_u>0$ and $\Lambda_v>0$ there is persistence of both species (persistence means that  $\liminf\limits_{t\to+\infty} U(t)>0$ and $\liminf\limits_{t\to+\infty} V(t)>0$).
\end{itemize}
In contrast, when $\Lambda_u \Lambda_v<0$, the possibilities for the long time dynamics are not exactly the same for the two models. For instance, if $\Lambda_u>0$ 
and $\Lambda_v<0$. Then in the probabilistic model their is always extinction of species $v$  but for the deterministic model there is 
either

 \begin{itemize} 		

\item Extinction of species $v$ (for almost all initial condition in $\Omega$).

\item Extinction of species $v$ or coexistence (depending on the initial condition in $\Omega$).

\end{itemize}

Consequently, comparing the two models will be essentially done by comparing the evolution of these 
invasion rates according to the parameter $\lambda$. An analytical and a numerical comparison of these invasion rates is done in 
section \ref{section:ccl}. In particular, we show, for the two models, that even if the two environments are favorable to the same 
species, then the two species may coexist or, worse, the other species is the only survivor.

For a more fluid reading , the technical proofs are postponed to section \ref{section:proofs}.
\section{Random temporal variation : model and main results.}\label{section:temporal}

\subsection{The probabilistic model :  a PDMP system}\label{subsection:temporalmodel}
As stated before, we pick two environments $\varepsilon^1$ and $\varepsilon^2$ and we model the environmental
variation of a biosystem by randomly switching the chemostat the two species and the resource evolve in.
This idea and its mathematical resolution has been introduced in \cite{lv}. In this previous article, the authors exhibit counterintuitive 
phenomenon on the behavior of a two-species Lotka-Volterra model of competition where 
the environment switches between two environments that are both favorable to the same species. Indeed, they show that 
coexistence of the two species or extinction of the species favored by the two environments can occur.

We consider the stochastic process $(R_t,U_{t},V_{t})$ defined by the system of differential equations:
\begin{equation}\label{eq:chem_pdmp}
  \left\{
\begin{aligned}
 &\dot{R_t} = \delta^{I_t} (R_0^{I_t} - R_t) - U_{t} f_u^{I_t}(R_t) - V_{t} f_v^{I_t}(R_t) \\
 &\dot{U_{t}} = U_{t}(f_u^{I_t}(R_t)-\delta^{I_t})\\
 &\dot{V_{t}} = V_{t}(f_v^{I_t}(R_t)-\delta^{I_t})
\end{aligned}
\right.
\end{equation}
where $(I_t)$ is a continuous time Markov chain on the space of states $E=\{1,2\}$. We note $\lambda^1$ and $\lambda^2$ the jump rates.
Starting from the state $j$, we wait an exponential time of parameter $\lambda^j$ before jumping to the state $\overline{j}$. The invariant measure
of $(I_t)$ is $\frac{\lambda^2}{\lambda^1+\lambda^2}\Delta^1 + \frac{\lambda^1}{\lambda^1+\lambda^2}\Delta^2$ (where $\Delta^j$ 
is the Dirac measure in $j$). 

Let us note the jump rates: $\lambda^1 = s\lambda$ and $\lambda^2 = (1-s)\lambda$ with $s \in (0,1)$ and $\lambda>0$. Parameter $s$ (respectively $1-s$) can be seen
as the proportion of time the jump process $(I_t)$ spends in state $2$ (respectively $1$). The parameter $\lambda$ will be seen as the global switch rate of $(I_t)$.

The process $(Z_t) = (R_t,U_{t},V_{t},I_t)$ is what we call a Piecewise Deterministic Markov Process (PDMP) as introduced
by Davis in \cite{davis}. 

Let us call:
\[
K = \{(r,u,v)\in \dR_+^{3} \text{ }, \quad \frac{\min(R_0^1,R_0^2)}{2} \leq r+u+v \leq 2\max(R_0^1,R_0^2) \},
\]
and 
\[
M = K\times\{1,2\}.
\]
According to remark \ref{rem:chgt_var}, $Z_t$ will reach $M$ for any initial condition $Z_0 \in \dR_+^{n+1}\times\{1,2\}$. We can then assume 
that $Z_0\in M$ and, as a consequence, $M$ is as the state space of the process $(Z_t)$.

We will call the extinction set of species $w$ the set:
\[
 M_{0,w} = \{ (r,u,v,i) \in M\text{ },\quad w = 0\},
\]
and the extinction set:
\[
 M_0 = \underset{w}{\bigcup} M_{0,w}.
\]
It is clear that the process $(Z_t)$ leaves invariant all the extinction set and the interior set $M\setminus M_0$.

In order to describe the behavior of the process $(Z_t)$ when $Z_0 \in M\setminus M_0$, \cite{lv} suggests to study the invasion rates 
of species $w$ defined as:
\[
 \Lambda_w = \int \left(f_w^{1}(R)-\delta^{1}\right) d\mu(R,1) + \int \left(f_w^{2}(R)-\delta^{2}\right) d \mu(R,2),
\]
where $\mu$ is an invariant probability measure of $(Z_t)$ on $M_{0,w}$.

\begin{rem}
The idea behind the definition of the invasion rate $\Lambda_u$ (same for $\Lambda_v$) is the following. From (\ref{eq:chem_pdmp}) comes:
\begin{align*}
& \frac{\dot{U_t}}{U_t} = f_u^{I_t}(R_t)-\delta^{I_t} = \cA(Z_t)\\
& \int \frac{\dot{U_t}}{U_t} ds = \int \cA(Z_s)ds\\
& \frac{1}{t} \log U_t = \frac{1}{t} \int \cA(Z_s)ds.
\end{align*}
Formally, the ergodic theorem allows to write:
\[
 \frac{1}{t} \log U_t \rightarrow \int \cA(z) d\mu(z),
\]
where $\mu$ is an invariant measure for the process $(Z_t)$. If $\mu$ is an invariant measure of $(Z_t)$ on $M_{0,u}$, we define 
$\Lambda_u = \int \cA(z) d\mu(z)$. By Feller continuity (see \cite{mal1}) it comes that $\Lambda_u$ can be seen as the exponential 
growth rate of $U_t$ when $U_t$ is close to zero.
\end{rem}

As stated in this previous remark, $\Lambda_w$ can be seen as the exponential growth rate of the concentration of the species $w$ when its concentration is close to zero. If 
$\Lambda_w > 0$, the concentration of $w$ tends to increase from low values and if $\Lambda_w<0$, the concentration of $w$ tends to decrease from low values.

\subsection{Dynamics of the PDMP model}\label{subsection:temporalresults}

We are interested in the long time behavior of the concentration of the species $u$ and $v$. In \cite{lv}, the authors show that the signs of the invasion rates
characterizes the long time behavior of the randomly switched Lotka-Volterra model of competition. It is expected to have the same result in the chemostat
case. We expect the two following behavior for the concentration of the species $u$ and $v$:
\begin{itemize}
 \item Species $w\in\{u,v\}$ goes to extinction if $W_t \rightarrow 0$ almost surely for any initial condition $Z_0 \in M\backslash M_0$.
 \item We have coexistence of the two species when the two species do not go to extinction for any initial condition $Z_0 \in M\backslash M_0$.
In this case, any invariant measure of $(Z_t)$ is supported by $M\backslash M_0$.
\end{itemize}

We will not give the proofs for theorem \ref{thm:intro_n1} and theorem \ref{thm:intro_n2} since it follows to a few details the same 
path as in \cite{lv}. Note that these proofs uses some renewal theory arguments coupled with the analytic properties of the invasion rates.

\subsubsection{Long time behavior when only one species is introduced}
Assume that species $\overline{w}$ is not in the system ($\overline{W_t} = 0$). Then, the process $Z_t = (R_t,W_t,I_t)$ satisfies:
\begin{equation}
  \left\{
\begin{aligned}
 &\dot{R_t} = \delta^{I_t} (R_0^{I_t} - R_t) -  U_{t} f_u^{I_t}(R_t)\\
 &\dot{W_{t}} = W_{t}(f_w^{I_t}(R_t)-\delta^{I_t})
\end{aligned}
\right.
\end{equation}
In order to emphasize the fact that species $\overline{w}$ is absent of the system, let us define:

\[
 \Lambda_w^0 = \int \left(f_w^{1}(R)-\delta^{1}\right) d\mu_w^0(R,1) + \int \left(f_w^{2}(R)-\delta^{2}\right) d \mu_w^0(R,2),
\]
where $\mu_w^0$ will be proven (see Section 4 and 5) to be the unique invariant measure of the process ($Z_t)$ restricted to $M_{0,w}$.



The first result which is similar to the main result in \cite{lv} is the following:

\begin{thm}\label{thm:intro_n1}
The sign of the invasion rate $\Lambda_w^0$ characterizes the evolution of the species $w$:
   \begin{enumerate}
     \item If $\Lambda_w^0<0$ species $w$ goes to extinction: $W_t \rightarrow 0$ almost surely.
     \item If $\Lambda_w^0>0$ species $w$ perpetuates.
   \end{enumerate}
\end{thm}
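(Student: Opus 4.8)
The plan is to use the conservation-law structure of the chemostat, exactly as in Remark \ref{rem:chgt_var}, to reduce the boundary dynamics to a scalar random differential equation driven by an autonomous ergodic process, and then to run the usual dichotomy: a comparison plus ergodic-theorem argument for the extinction case, and a ``repelling boundary'' argument for persistence, following the scheme of \cite{lv}.

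\emph{Step 1: reduction.} Put $\Sigma_t = R_t + W_t$. Because the two equations of the reduced system share the dilution rate $\delta^{I_t}$, the consumption terms cancel and $\Sigma_t$ solves the \emph{autonomous} switched linear equation $\dot\Sigma_t = \delta^{I_t}(R_0^{I_t}-\Sigma_t)$. Hence $(\Sigma_t,I_t)$ is a PDMP on a compact interval; being a contracting switched flow it admits a unique invariant probability measure, which is the image of $\mu_w^0$, and by Feller continuity its empirical occupation measures converge almost surely to $\mu_w^0$ for every initial condition (this is the boundary ergodicity established in Sections 4--5). Substituting $R_t = \Sigma_t - W_t$, the equation for the species becomes the scalar random ODE
\[
 \dot W_t = W_t\,\big(f_w^{I_t}(\Sigma_t - W_t)-\delta^{I_t}\big),
\]
with $(\Sigma_t,I_t)$ an exogenous ergodic driver. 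Everything now reduces to this one-dimensional logistic-type equation.

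\emph{Step 2: extinction when $\Lambda_w^0<0$.} Since $f_w^i$ is increasing, $f_w^{I_t}(\Sigma_t - W_t)\le f_w^{I_t}(\Sigma_t)$, so integrating the logarithmic derivative of $W_t$,
\[
 \frac{1}{t}\log\frac{W_t}{W_0} \;\le\; \frac{1}{t}\int_0^t\big(f_w^{I_s}(\Sigma_s)-\delta^{I_s}\big)\,ds \;\longrightarrow\; \Lambda_w^0 < 0 ,
\]
almost surely as $t\to\infty$, by the ergodic theorem applied to $(\Sigma_t,I_t)$. Therefore $\limsup_t \frac{1}{t}\log W_t \le \Lambda_w^0<0$ and $W_t\to 0$ exponentially fast, almost surely, for every $Z_0\in M\setminus M_{0,w}$ (the case $W_0=0$ being trivial).

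\emph{Step 3: persistence when $\Lambda_w^0>0$, and the main obstacle.} Now $\mu_w^0$ is the unique invariant measure carried by $M_{0,w}$, and it is ``unstable'' because its external Lyapunov exponent $\Lambda_w^0$ is positive. I would (a) build a Lyapunov function $V=W^{-p}$, $p>0$ small, with $\cL V\le -cV$ on a neighbourhood of $M_{0,w}$: using the Lipschitz bound $f_w^{I_t}(\Sigma_t - W_t)\ge f_w^{I_t}(\Sigma_t)-LW_t$, the time-averaged growth rate of $\log W_t$ near the boundary is $\ge \Lambda_w^0-o(1)>0$; (b) verify accessibility of the interior, i.e.\ controllability of the switched system --- by a suitable choice of switch times one steers $(R,W)$ into a region where $W$ is bounded below --- together with the weak bracket condition ensuring the interior invariant measure is unique with a density; (c) conclude by the standard persistence theory for PDMPs, using Feller continuity (see \cite{mal1}) and the renewal scheme of \cite{lv}: any invariant measure charging $M_{0,w}$ would coincide with $\mu_w^0$ there and hence force $\int(f_w^i-\delta^i)\,d\mu = \Lambda_w^0>0$, contradicting $\limsup_t\frac{1}{t}\log W_t\le 0$ (which holds since $W_t$ is bounded); thus every invariant measure lives in $M\setminus M_{0,w}$ and $\dP_{Z_0}(W_t\not\to 0)=1$. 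Equivalently, one tracks the excursions of $W_t$ near $0$: a low excursion takes place while $R_t$ is close to the attractor of the boundary flow, so $W_t$ rebounds at the ergodic rate $\approx\Lambda_w^0>0$, and a Borel--Cantelli/renewal estimate forbids accumulation at $0$. The genuinely delicate points are concentrated here: obtaining a \emph{uniform, quantitative} lower bound on the boundary Lyapunov exponent even though $\Sigma_t$ need not have equilibrated during a short low excursion (handled through Feller continuity and uniform ergodic estimates for the boundary flow), and checking the accessibility and bracket hypotheses that make the interior invariant measure unique and attracting. Steps 1 and 2 are essentially routine once the conservation law $\dot\Sigma_t=\delta^{I_t}(R_0^{I_t}-\Sigma_t)$ is exploited.
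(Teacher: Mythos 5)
Your proposal matches the paper's intended argument: the paper gives no proof of Theorem \ref{thm:intro_n1}, stating only that it follows, up to details, the renewal-theoretic scheme of \cite{lv} combined with the analytic properties of the invasion rates and the boundary ergodicity of Sections 4--5, which is precisely the strategy you outline. Moreover, your Step 2 --- the conservation-law comparison $f_w^{I_s}(\Sigma_s-W_s)\le f_w^{I_s}(\Sigma_s)$ giving $\limsup_{t}\frac{1}{t}\log W_t\le \Lambda_w^0<0$ via unique ergodicity of $(\Sigma_t,I_t)$ --- is a complete and correct proof of the extinction half, while your persistence half is, like the paper's, an outline of the standard Lyapunov-exponent/accessibility machinery with the delicate points correctly identified.
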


\subsubsection{Long time behavior when two species are introduced}

Now, we assume that \underline{$R_0^1=R_0^2=R_0$}. According to remark \ref{rem:chgt_var},
the sum $\Sigma_t\to R_0$ as $t\to+\infty$. As a consequence, the long-time behavior of $(Z_t)$ is obtained by
assuming that $\Sigma_t = R_0$ in \eqref{eq:chem_pdmp}. Moreover, recall that the invasion rates are defined by:
\[
 \Lambda_w = \int \left(f_w^{1}(R)-\delta^{1}\right) d\mu_w(R,1) + \int \left(f_w^{2}(R)-\delta^{2}\right) d \mu_w(R,2),
\]
where $\mu_w$ is an invariant measure of $(Z_t)$ restricted to $M_{0,w}$.

\begin{asss}
Denote $(H_w)$ the assertion which is true if and only if one of the following assertion is true :
\begin{itemize}
 \item (i) $\exists j \in \{1,2\}$ such that $\varepsilon^j$ is unfavorable to the species $w$.
 \item (ii) $\exists s\in(0,1)$ such that the averaged chemostat $\varepsilon_s$ is unfavorable to the species $w$ (see the following remark \ref{rem:chem_moyen} for a precise definition of the averaged chemostat).
\end{itemize}
\end{asss}

\begin{rem}\label{rem:chem_moyen}
Formally, let  $\varepsilon_s=(1-s)\varepsilon^1 + s \varepsilon^2$ the averaging of the two chemostats $\varepsilon^1$ and $\varepsilon^2$. 
The associated differential system modelizing the behavior of the different
concentrations in $\varepsilon_s$ is given by:
\begin{equation}
 \left\{
\begin{aligned}
 &\dot{R} = \overline{\delta} (\overline{R_0} - R) -U \overline{f_u}(R)-V \overline{f_v}(R)\\
 &\dot{U} = U(\overline{f_w}(R)-\overline{\delta})\\
&\dot{V} = V(\overline{f_w}(R)-\overline{\delta})\\
\end{aligned}
\right.
\end{equation}
Where $\overline{\delta} = (1-s)\delta^1 + s\delta^2$, $\overline{f_w} = (1-s)f_w^1 + sf_w^2$ and:
\[
 \overline{R_0} = \frac{(1-s)\delta^1R_0^1+s\delta^2R_0^2}{\overline{\delta}}.
\]

Despite the fact that the averaged consumption functions  $\overline{f_w}$ are not Monod functions in general, they are increasing functions verifying $\overline{f_w}(0)=0$. Thus the PEC holds for $\varepsilon_s$. In this sense we can defined the best competitor in $\varepsilon_s$. The averaged chemostat $\varepsilon_s$ is saied to be unfavorable to a species $w\in\{u,v\}$ if $w$ is not the best competitor in $\varepsilon_s$, that is if $W(t)\to 0$ as $t\to +\infty$. 
\end{rem}

Once again, the signs of the invasion rates  $\Lambda_u$, $\Lambda_v$ fully describe the long time behavior of the process:

\begin{thm}\label{thm:intro_n2}
The sign of the invasion rates $\Lambda_u$, $\Lambda_v$ characterizes the evolution of the species:
  \begin{enumerate}
     \item If $\Lambda_u>0$ and $\Lambda_v<0$ and $(H_v)$ is true then species $v$ goes to extinction.
     \item If $\Lambda_u<0$ and $(H_u)$ is true and $\Lambda_v>0$ then species $u$ goes to extinction.
     \item If $\Lambda_u<0$ and $\Lambda_v<0$ then of one the species goes to extinction. We say that it is a situation of exclusive bistability..
     \item If $\Lambda_u>0$ and $\Lambda_v>0$ then there is coexistence of both species.
  \end{enumerate}
\end{thm}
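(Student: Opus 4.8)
The plan is to reduce the PDMP~\eqref{eq:chem_pdmp} to a planar one and then to adapt the proof strategy of~\cite{lv}; the four assertions will then drop out of a sign analysis of the invasion rates carried by the extinction set. Since $R_0^1=R_0^2=R_0$, Remark~\ref{rem:chgt_var} applies verbatim — the switching does not enter the equation for $\Sigma_t$ — so $\Sigma_t\to R_0$ and the asymptotic dynamics of~\eqref{eq:chem_pdmp} is that of the switched analogue of~\eqref{eq:chem_simple_reducted}, obtained by setting $R_t=R_0-U_t-V_t$: a PDMP $(U_t,V_t,I_t)$ on the compact triangle $\mathcal{T}=\{u\ge0,\ v\ge0,\ u+v\le R_0\}$, driven by two smooth vector fields $F^1,F^2$ and the chain $(I_t)$. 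The triangle is absorbing (on the edge $u+v=R_0$ one has $R=0$, hence $\dot U,\dot V<0$), its two faces $M_{0,u}=\{u=0\}$ and $M_{0,v}=\{v=0\}$ are invariant, and for each fixed environment the vector field is competitive, so the whole setting of~\cite{lv} is in force.

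First I would record the structural facts needed to run the machinery. The Feller property and the existence of a compact absorbing set are immediate. The crucial ingredient is a H\"ormander-type accessibility statement: from any point of $\mathrm{int}\,\mathcal{T}$, and separately from any point of a given face, the set of points reachable along concatenations of pieces of the flows of $F^1,F^2$ has nonempty interior, and there is a point accessible from everywhere. As in~\cite{lv} one proves this by computing the Lie bracket $[F^1,F^2]$ and exploiting the strict monotonicity and concavity of the Monod functions $f_w$ to see that $F^1,F^2,[F^1,F^2]$ are linearly independent off a negligible set. This yields uniqueness of the invariant probability measure on $\mathrm{int}\,\mathcal{T}$ whenever one exists, and uniqueness of the non-trivial invariant measure on each face; on each face the dynamics is the one-species switched chemostat of Theorem~\ref{thm:intro_n1}, analysed via renewal theory, whose ergodic invariant measures are the trivial one $\mu_w^0$ (concentrated at $U=V=0$, $R=R_0$) and, when $\Lambda_w^0>0$, a unique persistent measure $\nu_w$. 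I expect this bracket computation to be the main technical obstacle.

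Next I would extract the sign dictionary. By construction the transverse Lyapunov exponent of the absent species is $\Lambda_w^0$ at $\mu_w^0$ and $\Lambda_{\overline w}$ at $\nu_w$; moreover $R\le R_0$ on $\mathcal{T}$ and $f_w$ is increasing, so $\Lambda_w\le\Lambda_w^0$, in particular $\Lambda_w>0\Rightarrow\Lambda_w^0>0$. \emph{Assertion~(4).} If $\Lambda_u,\Lambda_v>0$ then $\Lambda_u^0,\Lambda_v^0>0$ too, so \emph{every} ergodic invariant measure on $M_0$ has a strictly positive transverse exponent for at least one species; $M_0$ is then a repeller and the stochastic persistence criterion used in~\cite{lv} produces an invariant probability measure on $\mathrm{int}\,\mathcal{T}$ — unique by accessibility — and gives $\liminf_{t\to+\infty}U_t>0$, $\liminf_{t\to+\infty}V_t>0$ almost surely. \emph{Assertions~(1) and~(2)} are symmetric; for~(1), $\Lambda_u>0$ makes $M_{0,u}$ non-attracting and puts the attracting persistent measure $\nu_u$ on $M_{0,v}$, while $\Lambda_v<0$ makes $v$'s transverse exponent along $\nu_u$ negative, so the exclusion part of the argument of~\cite{lv} forces $V_t\to0$ almost surely. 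The hypothesis $(H_v)$ is exactly the non-degeneracy condition under which this passage from ``$\Lambda_v<0$'' to ``almost-sure extinction'' is legitimate: it rules out the degenerate regime in which both environments and all their averages favour $v$ (where $v$ clearly cannot die out) and is what guarantees that no coexistence invariant measure sits in $\mathrm{int}\,\mathcal{T}$.

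\emph{Assertion~(3).} When $\Lambda_u,\Lambda_v<0$, the hypotheses $(H_u)$ and $(H_v)$ ensure that both persistent measures $\nu_u,\nu_v$ are well defined (equivalently $\Lambda_u^0,\Lambda_v^0>0$) and that $\mathrm{int}\,\mathcal{T}$ carries no invariant probability measure; since the transverse exponents $\Lambda_v$ along $\nu_u$ and $\Lambda_u$ along $\nu_v$ are both negative, both faces are attracting. A stopping-time argument as in~\cite{lv} then shows that, almost surely, the trajectory is eventually trapped in a neighbourhood of one of the two faces and converges to the corresponding one-species regime, each alternative occurring with positive probability depending on $Z_0$ — exclusive bistability. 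This last step, ruling out a trajectory that oscillates between neighbourhoods of $M_{0,u}$ and $M_{0,v}$ forever, is the second delicate point after the bracket computation: it rests on each face being a local attractor together with the accessibility statement, which together yield a uniform lower bound, over bounded time windows, for the probability of absorption near one face, whence a Borel--Cantelli conclusion.
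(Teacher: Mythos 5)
Your sketch is essentially the approach the paper intends: the authors give no proof of Theorem \ref{thm:intro_n2} at all, stating only that it ``follows to a few details the same path as in \cite{lv}'' via renewal theory and the analytic properties of the invasion rates, and your reduction to the triangle $\{u\ge 0, v\ge 0, u+v\le R_0\}$, the accessibility/bracket condition, the identification of the invasion rates as transverse Lyapunov exponents of the boundary invariant measures, and the persistence/exclusion/bistability trichotomy are exactly the \cite{lv} machinery being invoked. Two small caveats: your explanation of the role of $(H_w)$ is a plausible guess but is not what the paper hints at (Remark \ref{rem:mono2} ties $(H_w)$ to the monotonicity of $\lambda\mapsto\Lambda_w$, i.e.\ it is a substitute for an unproven analytic property used to exclude interior invariant measures, in the spirit of Proposition \ref{prop:prop_convexity}); and in case 3 you invoke $(H_u)$ and $(H_v)$, which are not hypotheses of that case --- what you actually need there, namely $\Lambda_u^0>0$ and $\Lambda_v^0>0$ so that both persistent boundary measures $\nu_u,\nu_v$ exist, is already implicit in the definition of $\Lambda_u$ and $\Lambda_v$.
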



See section 4 for a numerical investigation over the signs of these invasion rates. We show numerically that for any couple of signs 
$(x,y) \in\{+,-\}$ there exists pair of chemostats $\varepsilon^1$, $\varepsilon^2$  such that $(Sign(\Lambda_u),Sign(\Lambda_v)) = (x,y)$.

Moreover, $\varepsilon^1$ and $\varepsilon^2$ may be chosen both favorable to $u$ ($R_u^j<R_v^j$ for $j=1,2$) or both favorable to $v$ ($R_u^j>R_v^j$ for $j=1,2$) or one favorable to $u$ and the other to $v$ ( ($R_u^1-R_v^1)(R_u^2-R_v^2)<0$ for $j=1,2$).

In particular, it is possible to pick chemostats $\varepsilon^1$ and $\varepsilon^2$ both favorable to the species $u$ such that for some values of the switching rate $\lambda$, 
$\Lambda_u <0$: switching between two environments favorable to species $u$ can surprisingly make it disappear (see figure \ref{fig:fig2}-a).

\section{Spatial heterogeneity : model and main results}\label{section:spatial}
\subsection{The deterministic model : a gradostat-like system}\label{subsection:spatialmodel}
The gradostat model is obtained by connecting the two chemostats $\varepsilon^1$ and $\varepsilon^2$ and allowing them to trade their content.

Note $\cV^j$ the volume of the chemostat $\varepsilon^j$ and $Q$ the volumetric flow rate between the two vessels and $U^j(t)$ the concentration
of the species $u$ in the chemostat $\varepsilon^j$. It comes:
\[
 \left\{
\begin{aligned}
 &\dot{(U^1\cV^1)}(t) = -QU^1(t) + QU^2(t)\\
 &\dot{(U^2\cV^2)}(t) = QU^1(t) - QU^2(t).
\end{aligned}
 \right.
\]
Which implies the following differential equations on the concentrations:
\begin{equation}\label{eq:transfert}
 \left\{
\begin{aligned}
 &\dot{U^1}(t) = -\frac{Q}{\cV^1}U^1(t) + \frac{Q}{\cV^1}U^2(t)\\
 &\dot{U^2}(t) = \frac{Q}{\cV^2}U^1(t) - \frac{Q}{\cV^2}U^2(t).
\end{aligned}
 \right.
\end{equation}
We will denote $\lambda^j = \frac{Q}{\cV^j}$. Similarly, we denote $V^j(t)$ the concentration of the species $v$ in the chemostat 
$j$ and $R^j(t)$ the concentration of the resource in the chemostat $j$. We will also denote $\{j,\overline{j}\} = \{1,2\}$. 

The evolution of the gradostat is described by the following 
system of  differential equations:
\begin{equation}\label{eq:chem_Deter}
 \left\{
\begin{aligned}
 &\dot{R^j}(t) = \delta^j (R_0^{j} - R^j(t)) - U^j(t) f_u^j(R^j(t)) - V^j(t) f_v^j(R^j(t)) \;\;+{\lambda^j (R^{\overline{j}}(t)-R^j(t))}\\
 &\dot{U^j}(t) = U^j(t)(f_u^j(R^j(t))-\delta^j)\phantom{++++++++++++++}+{\lambda^j (U^{\overline{j}}(t)-U^j(t))}\\
 &\dot{V^j}(t) = V^j(t)(f_v^j(R^j(t))-\delta^j)\phantom{++++++++++++++}+{\lambda^j (V^{\overline{j}}(t)-V^j(t))}.
\end{aligned}
\right.
\end{equation}
The part with $\lambda^j$ in factor comes from the transfer equation \eqref{eq:transfert} and the other part comes from the chemostat equation
\eqref{eq:chem_simple}.

Let us write $R(t) = \begin{pmatrix}
 R^1(t)\\
 R^2(t)
\end{pmatrix}$, $U(t) = \begin{pmatrix}
 U^1(t)\\
 U^2(t)
\end{pmatrix}$, $V(t) = \begin{pmatrix}
 V^1(t)\\
 V^2(t)
\end{pmatrix}$, $R_0 = \begin{pmatrix}
 R_0^{1}\\
 R_0^{2}
\end{pmatrix}$, $\delta = \begin{pmatrix}
 \delta^1\\
 \delta^2
\end{pmatrix}$ and $f_w(R) = \begin{pmatrix}
 f_w^1(R^1)\\
 f_w^2(R^2)
\end{pmatrix}$. Moreover, set $\lambda^1 = s\lambda$ and $\lambda^2 = (1-s)\lambda$ with $\lambda >0$ and $s\in(0,1)$ and $K = \begin{pmatrix}
 -s & s\\
1-s & s-1
\end{pmatrix}$. By convention $\begin{pmatrix}
 w\\
 x
\end{pmatrix} \begin{pmatrix}
 y\\
 z
\end{pmatrix} = \begin{pmatrix}
 wy\\
 xz
\end{pmatrix}$. With this notations, the system \eqref{eq:chem_Deter} reads shortly:
\begin{equation}\label{eq:chem_deter_r}
 \left\{
\begin{aligned}
 &\dot{R}(t) = \delta (R_0 - R(t)) - U(t) f_u(R(t))+\lambda K R(t)\\
 &\dot{U}(t) = U(t)(f_u(R(t))-\delta)\qquad\qquad\;+\lambda K U(t)\\
 &\dot{V}(t) = V(t)(f_v(R(t))-\delta)\qquad\qquad\;+\lambda K V(t).
\end{aligned}
\right.
\end{equation}
We consider initial value will be taken in the set $(\dR_+^*\times\dR_+^*)^3$.

Set $\Sigma^j(t) = R^j(t) + U^j(t) + V^j(t)$. The vector $\Sigma(t) = \begin{pmatrix}
                                                                        \Sigma^1(t)\\
                                                                        \Sigma^2(t)
                                                                       \end{pmatrix}$ 
satisfies the linear differential system:
\[
  \dot{\Sigma}(t) = \left( \lambda K - \Delta \right) \Sigma(t) + \delta R_0,
\]
where $\Delta = \begin{pmatrix}
                 \delta^1 & 0\\
                 0 & \delta^2\\
                \end{pmatrix}$.

The matrix $\lambda K - \Delta$ has two real negative eigenvalues.
Hence we may set $\Sigma=\begin{pmatrix}\Sigma^1\\\Sigma^2\end{pmatrix}:=(\Delta-\lambda K)^{-1}(\delta R_0)$ and we have

$$\lim_{t\to+\infty}\Sigma(t)=\Sigma$$


Since every trajectory is asymptotic to its omega limit set, it is important to study the system on this set. 

As a consequence, in all the following
our attention will be focused on the system:
\begin{equation}\label{eq:chem_deter}
 \left\{
\begin{aligned}
 &\dot{U}(t) = U(t)(f_u(\Sigma-U(t)-V(t))-\delta)+\lambda K U(t)\\
 &\dot{V}(t) = V(t)(f_v(\Sigma-U(t)-V(t))-\delta)+\lambda K V(t).
\end{aligned}
\right.
\end{equation}
With initial condition in the set $(\dR_+^{*}\times \dR_+^{*})^2$.
The appendix $F$ of \cite{sw} shows that the long time dynamics of \eqref{eq:chem_Deter} is completly given by the dynamics of \eqref{eq:chem_deter}.

\subsection{Dynamics of the gradostat like model}\label{subsection:spatialresults}

We are interested in the long time behavior of the solution of this differential system. It is proven in \cite{sw,JSTW}, using strongly the monotonicity of the system, that
any solution of \eqref{eq:chem_deter} converges to a stationary equilibrium when the consumption functions $f_w^j$ do not depend on the vessel
$\varepsilon^j$. Their proofs are mainly based on the study of the existence and stability of stationary solutions and on general results about monotone system due to Hirsch (see the appendix B and C in \cite{sw} and the references therein). 

This strategy is still working in the case of vessel-dependent consumption function $f_w^j$, the main additional difficulty being that the 
structure of the stationary solutions is richer when the functions $f_w^j$ do depend on $j$.  We do a 
complete description of the stationary solution detailled in section \ref{section:proofs}. This description relies on the construction of different
functions defined on the interval $[0,R_0^1]$ which intersections in a certain domain of the plane $[0,R_0^1]\times[0,R_0^2]$ gives the existence and stability
of stationary solutions for \eqref{eq:chem_deter}.
 
The main idea of the construction of these functions is the following:
\begin{enumerate}
\item If the species $w$ survives at the equilibrium, then $0$ is the principal eigenvalue of the matrix $A_w(R)=f_w(R)-\delta+\lambda K$ which implies that $R=(R^1,R^2)$ belongs to the graph of a function $F_w$.
\item If the  species $w$  survives (without competition) then $W=R_0-R$ is the principal eigenfunction of $A_w(R)$ and then $R=(R^1,R^2)$ belongs to the graph of a function $g_w$.
\end{enumerate}

In section \ref{section:proofs}, it is show how the relative position of the four curves  $R^2=g_w(R^1)$ and $R^2=F_w(R^1)$ ($w\in\{u,v\}$) give a graphical understanding of the existence of the steady states and their stability. See the  figure 
\ref{fig:caspossibles}.


\subsubsection{Long time behavior when only one species is introduced}
Assume that $\overline{w}$ is not in the system ($\overline{W}(t) = 0$). In this particular case, it is possible to study 
the behavior of the system. Without competition, the differential equation describing the evolution of the system is:
\begin{equation}\label{eq:chem_deter_1}
\dot{W}(t) = W(t)(f_w(\Sigma-W(t))-\delta)+\lambda K W(t)
\end{equation}
with initial condition$W(0)\in\dR_+^*\times\dR_+^*$.

It can be proven like in \cite{sw} that any trajectory of this previous differential equation goes to a stationary point. Let us call $E_0 =(0,0)$, 
$E_0$ is the trivial stationary point of the system (\ref{eq:chem_deter_1}) and its linear stability gives the characterizes of the solutions of
\eqref{eq:chem_deter_1}:

\begin{thm}\label{thm:semitrivial} The global dynamics of the system \eqref{eq:chem_deter_1} is as follows.
\begin{itemize}
 \item If $E_0$ is linearly stable, then it is the only stationary point and any 
trajectory is attracted by $E_0$ for any initial condition in $\dR_+^{*,2}$.
 \item If $E_0$ is linearly unstable, then there exists a unique stationary point $E_w = (W^1,W^2)\in\dR_+^*\times\dR_+^*$. Moreover
$E_w$ is a global attractor for the system \eqref{eq:chem_deter_1} in $\dR_+^*\times\dR_+^*$.
\end{itemize}
\end{thm}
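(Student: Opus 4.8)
The plan is to treat \eqref{eq:chem_deter_1} as a strongly monotone (competitive-type, but here cooperative after the standard change of order) dynamical system on the order interval $[0,\Sigma^1]\times[0,\Sigma^2]$ and to combine the existence/uniqueness of a positive equilibrium with monotone convergence. First I would check that the nonnegative octant is positively invariant and that the region $\{0\le W^j\le \Sigma^j\}$ is a compact attracting box: the $\lambda K$ term is the generator of a conservative exchange so it does not create mass, and the chemostat terms push $W^j$ below $\Sigma^j$ exactly as in Remark~\ref{rem:chgt_var}; hence all trajectories enter and stay in this box, and the $\omega$-limit sets live there. Next I would observe that on this box the vector field of \eqref{eq:chem_deter_1} is cooperative and irreducible (the off-diagonal coupling coefficients $\lambda s$ and $\lambda(1-s)$ are strictly positive, and the reaction part is diagonal), so by Hirsch's theorem (appendix C of \cite{sw}) the flow is strongly monotone and almost every trajectory converges to an equilibrium; moreover, because the box is a sublattice with a unique structure of equilibria to be determined, we can upgrade "almost every" to "every" once we know the equilibrium set.

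The core of the argument is the equilibrium analysis, which is where the hypothesis about linear stability of $E_0$ enters. A positive equilibrium $E_w=(W^1,W^2)$ of \eqref{eq:chem_deter_1} is characterized by the fact that $0$ is the principal (Perron--Frobenius) eigenvalue of the matrix $A_w(R)=\mathrm{diag}(f_w(R))-\Delta+\lambda K$ evaluated at $R=\Sigma-W$, with $W>0$ the corresponding principal eigenvector; this is precisely the reduction announced before the theorem, putting $R$ on the graph of $F_w$ and on the graph of $g_w$. The function $R^j\mapsto$ (principal eigenvalue of $A_w(R)$) is, by Perron--Frobenius theory, strictly increasing in each $R^j$ (increasing a diagonal entry of an irreducible Metzler matrix strictly increases the spectral abscissa) and it is continuous; its value at $R=0$ is the spectral abscissa that governs the linear stability of $E_0$ (indeed the linearization of \eqref{eq:chem_deter_1} at $E_0$ is exactly $A_w(\Sigma)$... more precisely $\mathrm{diag}(f_w(\Sigma))-\Delta+\lambda K$), and its value at $R=\Sigma$ is $-\min_j\delta^j<0$. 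So: if $E_0$ is linearly unstable, i.e. the spectral abscissa of $A_w(\Sigma)$ is positive, the monotonicity and continuity of the principal eigenvalue along the segment from $R=\Sigma$ (corresponding to $W=0$) inward forces a unique $R^\star$ with principal eigenvalue $0$, giving a unique positive equilibrium $E_w$; if $E_0$ is linearly stable, no such crossing occurs and $E_0$ is the only equilibrium in the box.

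Finally I would assemble the dichotomy. In the stable case, $E_0$ is the unique equilibrium, the flow is strongly monotone with a compact attracting box, so every trajectory's $\omega$-limit set is an equilibrium, hence $\{E_0\}$, giving global attraction. In the unstable case, $E_0$ is a repeller transversally (its unstable manifold enters the open octant) while $E_w$ is the unique positive equilibrium; a standard monotone-dynamics squeezing argument then applies: pick $\varepsilon>0$ small so that $\varepsilon e$ (with $e$ the positive principal eigenvector of the linearization at $E_0$) satisfies $\varepsilon e \ll E_w$ and the vector field points strictly inward-and-up at $\varepsilon e$; then the trajectory from $\varepsilon e$ increases monotonically to an equilibrium, which must be $E_w$, and similarly the trajectory from a large comparison point $\bar W\gg E_w$ in the box decreases to $E_w$. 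Any positive initial condition is eventually trapped between such sub- and super-solutions (using that it enters the box and that the flow pushes it off $M_0$), so by the comparison principle it converges to $E_w$. The step I expect to be the main obstacle is the global (not just local) uniqueness of the positive equilibrium, i.e.\ ruling out extra equilibria on the curves $F_w$, $g_w$: this requires the strict monotonicity of the principal eigenvalue in each coordinate together with a genuine verification that the two graphs meet exactly once in the relevant sub-box, which is the content of the curve-intersection picture (Figure~\ref{fig:caspossibles}) developed in Section~\ref{section:proofs}; everything else is a routine application of Hirsch's monotone convergence machinery.
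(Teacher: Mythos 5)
Your proposal is correct and follows essentially the same route as the paper: the paper treats \eqref{eq:chem_deter_1} as a cooperative irreducible system on an attracting box and defers the monotone-convergence machinery to \cite{sw}, while existence and uniqueness of the positive equilibrium are obtained exactly as you anticipate, via the intersection of the decreasing curve $R^2=F_w(R^1)$ (zero principal eigenvalue of $A_w(R)$) with the increasing curve $R^2=g_w(R^1)$ (eigenvector consistency), cf.\ Proposition \ref{prop:survival}. Two small imprecisions only: the one-species system is cooperative directly (no change of order is needed; that device is only for the two-species system), and you have swapped $R=0$ and $R=\Sigma$ when evaluating the principal eigenvalue at the endpoints --- the linearization at $E_0$ is $A_w(\Sigma)$, as your own parenthesis says, while it is at $R=0$ (i.e.\ $W=\Sigma$) that the spectral abscissa is $\le -\min_j\delta^j<0$. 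Finally, ``monotonicity along the segment from $R=\Sigma$ inward'' should be monotonicity of the principal eigenvalue along the graph of $g_w$: the zero set of the principal eigenvalue in the $(R^1,R^2)$-plane is a curve, not a point, which is precisely why the second constraint $R^2=g_w(R^1)$ is needed; since $g_w$ is increasing and the eigenvalue is increasing in each diagonal entry, this closes the uniqueness step you flagged as the main obstacle.
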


Note that a stationary point for equation \eqref{eq:chem_deter_1} satisfies the equation:
\[
 \cF_w(W) = W(f_w(\Sigma-W)-\delta)+\lambda K W =0.
\]
The jacobian matrix of $\cF_w$ taken at $E_0$ is:
\begin{equation}\label{eq:jaco}
 A_w = \begin{pmatrix}
      f_w^1(\Sigma^1)-\delta^1-\lambda^1 & \lambda^1\\
      \lambda^2 & f_w^2(\Sigma^2)-\delta^2-\lambda^2
     \end{pmatrix}.
\end{equation}
We define the invasion rate $\Gamma_w^0$ of the species as the maximum eigenvalue of the matrix $A_w$:
\begin{equation}\label{eq:gaga}
 \Gamma_w^0 = \frac{1}{2} \left( f_w^1(\Sigma^1)-\delta^1 + f_w^2(\Sigma^2)-\delta^2 -\lambda^1 - \lambda^2 + \sqrt{\left( f_w^1(\Sigma^1)-\delta^1 - f_w^2(\Sigma^2)+\delta^2 \right)^2 + 4\lambda^1\lambda^2} \right)
\end{equation}

Theorem \ref{thm:semitrivial} yields:

\begin{cor}\label{cor:gammau}
The sign of $\Gamma_w^0$ characterizes the behavior of the system \eqref{eq:chem_deter_1}:
\begin{itemize}
 \item If $\Gamma_w^0 <0$ there is extinction of the species $w$: $\lim\limits_{t\to +\infty}W(t)=0$.
 \item If $\Gamma_w^0 >0$  there is persistence of the species $w$. More precisly:  $\lim\limits_{t\to +\infty}W(t)=E_w\in\dR_+^*\times\dR_+^*$.
\end{itemize}

\end{cor}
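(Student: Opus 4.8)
The statement is essentially a translation of Theorem~\ref{thm:semitrivial} in terms of the explicit quantity $\Gamma_w^0$, so the plan is to make the link between the linear stability of $E_0$ and the sign of $\Gamma_w^0$ precise and then quote the theorem. First I would observe that the Jacobian $A_w$ of $\cF_w$ at $E_0$, computed in \eqref{eq:jaco}, is an irreducible Metzler matrix: its off-diagonal entries $\lambda^1=s\lambda$ and $\lambda^2=(1-s)\lambda$ are strictly positive. By Perron--Frobenius theory for Metzler matrices the eigenvalue of $A_w$ with largest real part is real, simple, and carries a strictly positive eigenvector; this is the principal eigenvalue appearing in the discussion around Theorem~\ref{thm:semitrivial}, and for a $2\times 2$ matrix it is given by the closed formula \eqref{eq:gaga}. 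In particular the discriminant $(f_w^1(\Sigma^1)-\delta^1-f_w^2(\Sigma^2)+\delta^2)^2+4\lambda^1\lambda^2$ is strictly positive, so both eigenvalues of $A_w$ are real and $\Gamma_w^0$ is simply the larger of the two.

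Next I would record the resulting equivalence. Since the two eigenvalues of $A_w$ are real, $E_0$ is linearly stable if and only if its larger eigenvalue $\Gamma_w^0$ is negative, and linearly unstable if and only if $\Gamma_w^0>0$; the borderline case $\Gamma_w^0=0$ is not considered here. Feeding this into the dichotomy of Theorem~\ref{thm:semitrivial} then gives exactly the two alternatives of the corollary: if $\Gamma_w^0<0$ then $E_0$ is the unique stationary point of \eqref{eq:chem_deter_1} and every trajectory starting in $\dR_+^{*}\times\dR_+^{*}$ is attracted by it, i.e. $\lim_{t\to+\infty}W(t)=0$; if $\Gamma_w^0>0$ then the positive stationary point $E_w\in\dR_+^{*}\times\dR_+^{*}$ exists and is a global attractor in $\dR_+^{*}\times\dR_+^{*}$, i.e. $\lim_{t\to+\infty}W(t)=E_w$.

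There is no genuinely hard step, this being a corollary of Theorem~\ref{thm:semitrivial}. The only point deserving a line of care is the identification of the spectral abscissa of $A_w$ with the explicit expression $\Gamma_w^0$ of \eqref{eq:gaga} — that is where the positivity of the off-diagonal entries (for the Metzler/Perron--Frobenius structure) and of the discriminant (ensuring real eigenvalues, so that ``maximum eigenvalue'' is unambiguous and coincides with the spectral abscissa) are used; once this is in place the two regimes of the corollary follow by directly invoking the two bullets of Theorem~\ref{thm:semitrivial}.
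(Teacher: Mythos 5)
Your proposal is correct and matches the paper's (essentially implicit) argument: the paper derives the corollary directly from Theorem~\ref{thm:semitrivial} by identifying $\Gamma_w^0$ of \eqref{eq:gaga} with the dominant eigenvalue of the Jacobian $A_w$ at $E_0$, exactly as you do. Your added care about the Metzler structure and the strictly positive discriminant (so that both eigenvalues are real and the stability dichotomy is governed by the sign of $\Gamma_w^0$ alone) is the right justification and is consistent with the paper's Perron--Frobenius remarks elsewhere.
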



%
%

\subsubsection{Long time behavior when two species are introduced}%
For sake of comparison with the probabilistic case, we set 
$R_0 = R_0^1 = R_0^2$ even if computations are possible when these two quantities are different. The system  \eqref{eq:chem_deter} being strongly monotone,  the theorem C.9 from Hirsch  \cite{sw} implies that for almost all initial condition, the solutions tends to a stationary point.
Thus, the study of the existence and stability of these solutions is crucial in the understanding of the 
long-time behavior of the solutions of

From $R_0^1 = R_0^2$,  a stationary solution of \eqref{eq:chem_deter} 
satisfyies:
\begin{equation}\label{eq:station1}
 \cH(U,V) = 0 \Leftrightarrow \left\{
\begin{aligned}
 &U(f_u(R_0-U-V)-\delta)+\lambda K U = 0\\
 &V(f_v(R_0-U-V)-\delta)+\lambda K V = 0.
\end{aligned}
\right.
\end{equation}

Set $E_0 = (0,0,0,0)$. $E_0$ is the trivial stationary equilibrium. The jacobian matrix of $\cH$ at $E_0$ reads:
\[
 dH(E_0) = \begin{pmatrix}
  A_u & 0\\
  0 & A_v
 \end{pmatrix}
\]
where $A_w$ is defined in \eqref{eq:jaco}.

If both $A_u$ and $A_v$ have negative eigenvalues then $E_0$ is a locally attractive stationary point, and there are no 
other stationary equilibrium points.

If $A_u$ has at least one positive eigenvalue, then $E_0$ is not locally attractive. As a consequence, theorem \ref{thm:semitrivial} from
the previous subsection gives the existence of a unique semi-trivial stationary equilibrium $E_u=(U,0)$. Likewise, if $A_v$ has at least
one positive eigenvalue, we define $E_v = (0,V)$ as the other semi-trivial stationary equilibrium.

Moreover, arguments similar to the ones in  \cite{sw} yield
\begin{prop}\label{prop:global}
\begin{itemize}
\item  If $E_u$ and $E_v$ does not exists, then $E_0$ is a global attractor.
\item  Let $\{w,\overline{w}\}=\{u,v\}$.  If $E_w$ exists and $E_{\overline{w}}$ does not exists, then $E_w$ is a global attractor.
\end{itemize}
\end{prop}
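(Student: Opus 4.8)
The plan is to deduce Proposition~\ref{prop:global} from the one-species result Theorem~\ref{thm:semitrivial} (and its Corollary~\ref{cor:gammau}) by a comparison argument, in the spirit of the analogous statements in \cite{sw}. The starting point is that, since each Monod function $f_w^j$ is increasing and the positive orthant is forward invariant for \eqref{eq:chem_deter}, one has $f_u(\Sigma-U(t)-V(t))\le f_u(\Sigma-U(t))$ componentwise, so the $U$-component of any solution of \eqref{eq:chem_deter} satisfies the differential inequality
\[
\dot U(t)\ \le\ U(t)\bigl(f_u(\Sigma-U(t))-\delta\bigr)+\lambda K U(t)\ =:\ \cF_u(U(t)),
\]
and $\cF_u$ — which is exactly the right-hand side of \eqref{eq:chem_deter_1} for $w=u$ — is cooperative, its only off-diagonal entries being $\lambda^1,\lambda^2\ge 0$. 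By the comparison theorem for cooperative systems (see the monotone-systems appendix of \cite{sw}), if $W$ solves \eqref{eq:chem_deter_1} with $W(0)\ge U(0)$ then $0\le U(t)\le W(t)$ for all $t$; symmetrically for $V$. For the first bullet, the absence of $E_u$ and $E_v$ means $A_u,A_v$ have only negative eigenvalues, i.e.\ $\Gamma_u^0<0$ and $\Gamma_v^0<0$; Corollary~\ref{cor:gammau} then gives $W(t)\to 0$ for both single-species systems, and the comparison forces $U(t)\to 0$, $V(t)\to 0$, so $E_0$ is a global attractor.

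For the second bullet take $w=u$, so $\Gamma_u^0>0$ (hence $E_u$ exists) and $\Gamma_v^0<0$ (hence $E_v$ does not). Comparing $V$ with \eqref{eq:chem_deter_1} for $v$ and using $\Gamma_v^0<0$ gives $V(t)\to 0$, as above. The upper estimate for $U$ is immediate: comparing $U$ with \eqref{eq:chem_deter_1} for $u$, whose global attractor on $\dR_+^*\times\dR_+^*$ is $E_u$ by Theorem~\ref{thm:semitrivial}, gives $\limsup_{t}U(t)\le E_u$ componentwise. For the lower estimate one perturbs: given small $\epsilon>0$, choose $T_\epsilon$ with $V(t)\le\epsilon\ind$ for $t\ge T_\epsilon$; then for $t\ge T_\epsilon$,
\[
\dot U(t)\ \ge\ U(t)\bigl(f_u(\Sigma-\epsilon\ind-U(t))-\delta\bigr)+\lambda K U(t)\ =:\ \cF_u^{\epsilon}(U(t)),
\]
where $\cF_u^{\epsilon}$ is again cooperative and is the field of \eqref{eq:chem_deter_1} with $\Sigma$ replaced by $\Sigma-\epsilon\ind$. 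Its trivial equilibrium has linearization $A_u$ with each $\Sigma^j$ replaced by $\Sigma^j-\epsilon$, whose largest eigenvalue tends to $\Gamma_u^0>0$ as $\epsilon\to 0$ by continuity of \eqref{eq:gaga}; hence for $\epsilon$ small Theorem~\ref{thm:semitrivial} provides a unique positive equilibrium $E_u^{\epsilon}$ of $\cF_u^{\epsilon}$, globally attracting on $\dR_+^*\times\dR_+^*$, with $E_u^{\epsilon}\to E_u$ as $\epsilon\to 0$ by uniqueness and continuous dependence. Comparing $U$ from time $T_\epsilon$ with the solution of $\dot W=\cF_u^{\epsilon}(W)$ issued from $W(T_\epsilon)=U(T_\epsilon)\in\dR_+^*\times\dR_+^*$ gives $\liminf_{t}U(t)\ge E_u^{\epsilon}$; letting $\epsilon\to 0$ yields $\liminf_{t}U(t)\ge E_u$, and together with the upper estimate, $U(t)\to E_u=E_w$.

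The step I expect to be the crux is this lower estimate: one must check that the $\epsilon$-perturbed one-species system genuinely falls under the ``$E_0$ unstable'' branch of Theorem~\ref{thm:semitrivial} and that its positive equilibrium depends continuously on $\epsilon$ (here the uniqueness clause of Theorem~\ref{thm:semitrivial} is essential), together with the harmless auxiliary checks that $\Sigma-\epsilon\ind$ still has positive components and that the region where the $\Sigma$-reduction is valid stays invariant. Everything else — forward invariance of the orthant, the cooperative comparison theorem, continuity of the eigenvalue \eqref{eq:gaga} — is routine. An alternative route would be to invoke the strong monotonicity of \eqref{eq:chem_deter} in the competitive order together with the fact that $E_0$ and $E_w$ are then the only equilibria, but the squeezing argument above is more self-contained and closely parallels the treatment in \cite{sw}.
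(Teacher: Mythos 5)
Your proof is correct and follows the approach the paper intends: the paper gives no argument for Proposition \ref{prop:global} beyond deferring to \cite{sw}, and your combination of the cooperative comparison principle with the one-species result of Theorem \ref{thm:semitrivial}, together with the $\epsilon$-perturbation squeeze for the lower bound on $U$, is exactly the standard Smith--Waltman-style argument. The points you flag (forward invariance of the region where the $\Sigma$-reduction is valid, continuity of $E_u^{\epsilon}$ in $\epsilon$ via monotonicity in the field and the uniqueness clause of Theorem \ref{thm:semitrivial}) are indeed the only places requiring care, and they go through.
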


Hence, the most interesting case holds when both $E_u$ and $E_v$ exists.
In that case, it is possible to have  coexistence stationary solutions which may be stable or unstable. 



 

Define the following matrix:
\begin{equation}\label{Mw}
 M_{\overline{w}}(R_w) = \begin{pmatrix}
        f_{\overline{w}}^1(R_w^1) -\delta^1-\lambda^1 & \lambda^1 \\
        \lambda^2 & f_{\overline{w}}^2(R_w^2) -\delta^2-\lambda^2
       \end{pmatrix}.
\end{equation}
We show in section \ref{subsection:graphical} that  the stability of the semi-trivial equilibrium $E_w$  is given by the sign of the eigenvalues of $M_{\overline{w}}(R_w)$.

%

\begin{defi}\label{defi:gamma}
Let $\Gamma_w$ be the maximum eigenvalue of the matrix $M_w(R_{\overline{w}})$. We call $\Gamma_w$ the invasion rates
of the species $w$. 
\end{defi}
\begin{rem}
Let us explain the designation ``invasion rate'' for $\Gamma_u$.
If $\Gamma_u>0$, it means that the semi-trivial equilibrium $E_u=(U^1,U^2,0,0)$ is unstable. Consequently, according to previous remark,
it means that $(0,0)$ is un unstable equilibrium for the differential system:
\[
 \dot{V}(t) = V(t)\left(f_v(R_0-U-V(t))-\delta \right) +\lambda K V(t).
\]
Hence, if $V(0)$ is small enough, then $t\mapsto V(t)$  is increasing on $(0,\tau)$. In other words, $v$ invade the environment.
At the contrary, if $\Gamma_u<0$, the semi-trivial equilibrium $E_u$ is stable and from a small initial value $V(0)$, 
$V(t)\to (0,0)$.
\end{rem}

The signs of the invasion rates $\Gamma_w$ give the stability of the semi-trivial equilibrium $E_{\overline{w}}$ but determine also the existence and stability for coexistence stationary equilibrium. In section \ref{subsection:graphical} we give a full
characterization of the stationary solution and their stability.


Moreover, we can checked (see \cite{sw} appendix B), that the system \eqref{eq:chem_deter} has a  monotonic structure\footnote{  with respect to the order $(x_1,x_2,x_3,x_4)\leq_K (y_1,y_2,y_3,y_4)$ iff 
$x_1\leq y_1$, $x_2\leq y_2$ and $x_3\geq y_3$, $x_4\geq y_4$, see \cite{sw}).}. 
This monotonic structure is a very strong property which reduces the possibilities for the global dynamics of the system. In particular,
for almost every initial condition, the trajectory of the solutions of \eqref{eq:chem_deter} goes to a stationary equilibrium (see \cite{sw},
appendix C). Hence, using the result from the section \ref{section:proofs} and the same arguments that the ones stated in \cite{sw}, 
we obtain theorem \ref{thm:eq2} which describes the possible dynamics of \eqref{eq:chem_deter}. 


\begin{thm}\label{thm:eq2}
Assume that the two semi-trivial stationary equilibrium $E_u$ and $E_v$ exist. 
\begin{enumerate}
 \item If $\Gamma_v>0$ and $\Gamma_u>0$, then the solutions of \eqref{eq:chem_deter} go to the unique coexistence equilibrium $E^*$ which is 
linearly stable for almost every initial condition. 
 \item If $\Gamma_v<0$ and  $\Gamma_u<0$, then there exists an unstable coexistence solution $E_{cu}$. Moreover, the solutions of 
\eqref{eq:chem_deter} go either to $E_u$ of to $E_v$ (for almost every initial condition) depending on the location of the initial 
value according to the basin of attraction of the two semi-trivial equilibrium. We say that it is a situation of exclusive bistability.
 \item Let $\{w,\overline{w}\}=\{u,v\}$ and suppose that $\Gamma_{\overline{w}}<0$ and $\Gamma_w>0$. Then either :
 \begin{enumerate}
  \item There is not coexistence stationary equilibrium. In that case, any solution of \eqref{eq:chem_deter} converges to $E_w$ for almost every initial condition. 
  \item There exist two coexistence stationary equilibrium : one stable $E_{cs}$ and one unstable $E_{cu}$. Any trajectory of 
 \eqref{eq:chem_deter} go either to $E_{cs}$ or to $E_w$ (for almost every initial condition) depending on the location of the 
 initial value according to the basin of attraction of the two stable equilibria. We say that it is a situation of odd bistability.
 \end{enumerate} 
\end{enumerate}
\end{thm}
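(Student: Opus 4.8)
Run the standard monotone–dynamical–systems program for two–species competition, exactly as in \cite{sw,JSTW}; the only genuinely new ingredient is the refined description of the steady states carried out in Section~\ref{section:proofs}.

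\textbf{Step 1 (monotone framework and reduction).} After the reduction to \eqref{eq:chem_deter}, the semiflow is dissipative — every orbit issued from $(\dR_+^{*}\times\dR_+^{*})^2$ is eventually trapped in a fixed compact set since $U^j+V^j\le\Sigma^j$ asymptotically — and it is strongly monotone for the competitive order $\leq_K$ recalled in the footnote after Proposition~\ref{prop:global}. Comparing \eqref{eq:chem_deter} with the one–species system \eqref{eq:chem_deter_1} (replacing $R_0-U-V$ by $R_0-U$ only decreases the $U$–field, and symmetrically for $V$) together with Theorem~\ref{thm:semitrivial}, one sees that $E_u=(U^1,U^2,0,0)$ and $E_v=(0,0,V^1,V^2)$ are respectively the maximal and the minimal equilibria for $\leq_K$, and that every interior orbit eventually enters the order interval $[E_v,E_u]_K$. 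By Hirsch's generic convergence theorem for strongly monotone semiflows (\cite{sw}, Appendix~C), for almost every initial condition the orbit converges to an equilibrium; since the stable set of any (linearly) unstable equilibrium has zero Lebesgue measure, it suffices to enumerate the equilibria lying in $[E_v,E_u]_K$, to determine their local stability, and to identify the basins of the stable ones.

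\textbf{Step 2 (counting and classifying the equilibria).} Here I invoke the graphical picture of Section~\ref{section:proofs}: a coexistence equilibrium corresponds to a point $(R^1,R^2)$ lying simultaneously on the two curves $R^2=F_u(R^1)$ and $R^2=F_v(R^1)$ inside the relevant rectangle, and its local stability is read off from the relative slopes of these two curves at the intersection, while the behaviour of the curves near the boundary of the rectangle encodes exactly the signs of $\Gamma_u$ and $\Gamma_v$ (this is the same computation that identifies the stability of $E_w$ with $\mathrm{sign}(\Gamma_{\overline w})$, i.e. with the sign of the maximal eigenvalue of $M_{\overline w}(R_w)$, cf. Section~\ref{subsection:graphical}). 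The outcome of this bookkeeping is: when $\Gamma_u>0$ and $\Gamma_v>0$ the two curves meet at exactly one point and the intersection is (linearly) stable; when $\Gamma_u<0$ and $\Gamma_v<0$ they meet at exactly one point and the intersection is a saddle; and when $\Gamma_w>0>\Gamma_{\overline w}$ the number of intersections is $0$ or $2$, and in the latter case one is stable and the other a saddle.

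\textbf{Step 3 (global conclusions).} With this equilibrium structure, each item follows from monotone–systems arguments as in \cite{sw}:
\begin{itemize}
\item If $\Gamma_u,\Gamma_v>0$, both endpoints $E_u,E_v$ of the order interval are unstable, so the Dancer--Hess order–interval trichotomy forces an equilibrium strictly between them; by Step~2 it is the unique coexistence equilibrium $E^*$ and it is stable. As it is the only equilibrium in the interior of $[E_v,E_u]_K$ and the two endpoints have measure–zero stable sets, $E^*$ attracts almost every orbit — item~1.
\item If $\Gamma_u,\Gamma_v<0$, both $E_u$ and $E_v$ are locally asymptotically stable; Dancer--Hess produces an unstable equilibrium between them, namely the saddle $E_{cu}$ of Step~2. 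Its stable manifold is a codimension–one invariant set separating the basins of $E_u$ and $E_v$, so a.e. orbit converges to $E_u$ or to $E_v$ — exclusive bistability, item~2.
\item If $\Gamma_w>0>\Gamma_{\overline w}$, then $E_w$ is stable and $E_{\overline w}$ is unstable. If Step~2 gives no coexistence equilibrium, $E_w$ is the unique stable equilibrium in the order interval and attracts a.e. orbit (item~3(a)); if Step~2 gives two, the stable $E_{cs}$ and the saddle $E_{cu}$, then $E_w$ and $E_{cs}$ are both stable, $E_{cu}$ sits between them and its stable manifold separates their basins — odd bistability, item~3(b).
\end{itemize}

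\textbf{Main difficulty.} The dynamical half is a routine transcription of the Smith--Waltman gradostat theory; the delicate part is Step~2, namely the exact count of intersections of the curves $F_u$ and $F_v$ and the reading of stability from their slopes — in particular proving that in the mixed–sign case the count is always even (never larger than $2$) and in the equal–sign cases exactly one. This is precisely the content deferred to Section~\ref{section:proofs}, and it rests on monotonicity and convexity properties of the auxiliary functions $F_w$ and $g_w$ constructed there.
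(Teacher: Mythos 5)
Your proposal is correct and follows essentially the same route as the paper: the global part rests on the strongly monotone structure and Hirsch's generic convergence exactly as the authors do, and the equilibrium bookkeeping (existence via the intermediate value theorem applied to $F_u-F_v$ between $R_u^1$ and $R_v^1$, at most two intersections, stability of a coexistence point read off from the relative slopes $F_u'(R_c^1)/F_v'(R_c^1)$) is precisely the content of their Section~\ref{section:proofs}. The one piece you defer as the ``main difficulty'' --- that the slope comparison does govern linear stability --- is established in the paper by the sign-stability criterion for the competitive Jacobian, i.e.\ the explicit computation of the principal minors $d_1,\dots,d_4$ of $\overline{DH(U_c,V_c)}$, with $d_4$ reducing to a positive multiple of $\left(R_v^1-R_u^1\right)\left(\frac{F_u'(R_c^1)}{F_v'(R_c^1)}-1\right)$.
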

\begin{rem}As it is proven in \cite{sw},  the cases 2. and 3.b are impossible  
if the consumption functions does not depend on the vessels $\varepsilon^j$.
We show in figure \ref{fig:caspossibles} that every cases may happen in general.
\end{rem}


\begin{figure}
\centering
\begin{tabular}{cc}
a - Typical coexistence case. $R_c$ is associated & b - Typical bi-stable case. $R_c$ is associated to\\
to a globally stable coexistence & an unstable coexistence stationary\\
stationary equilibrium. & equilibrium. $E_u$ and $E_v$ are stable.\\
\includegraphics[scale = 0.3]{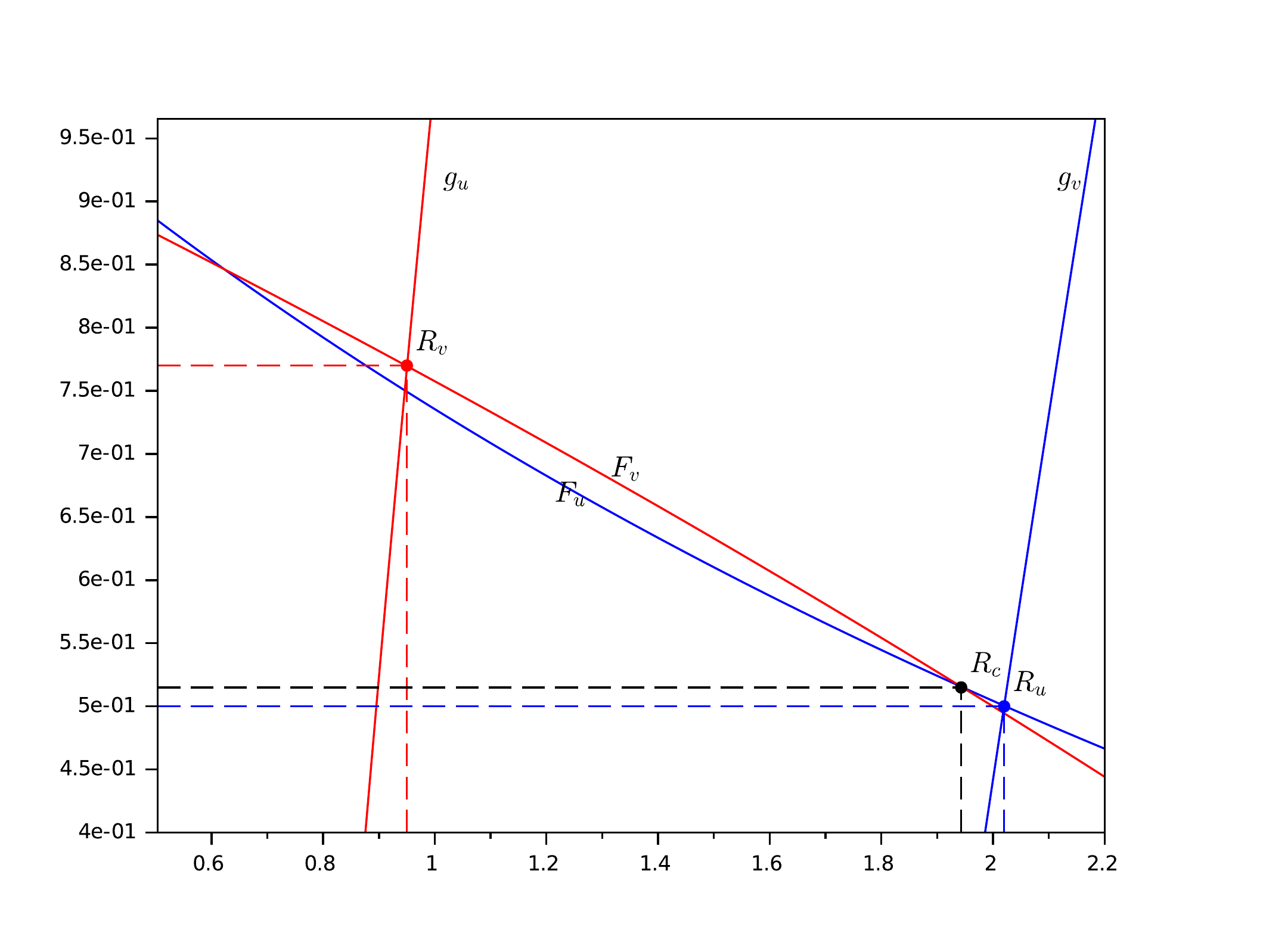} & \includegraphics[scale = 0.3]{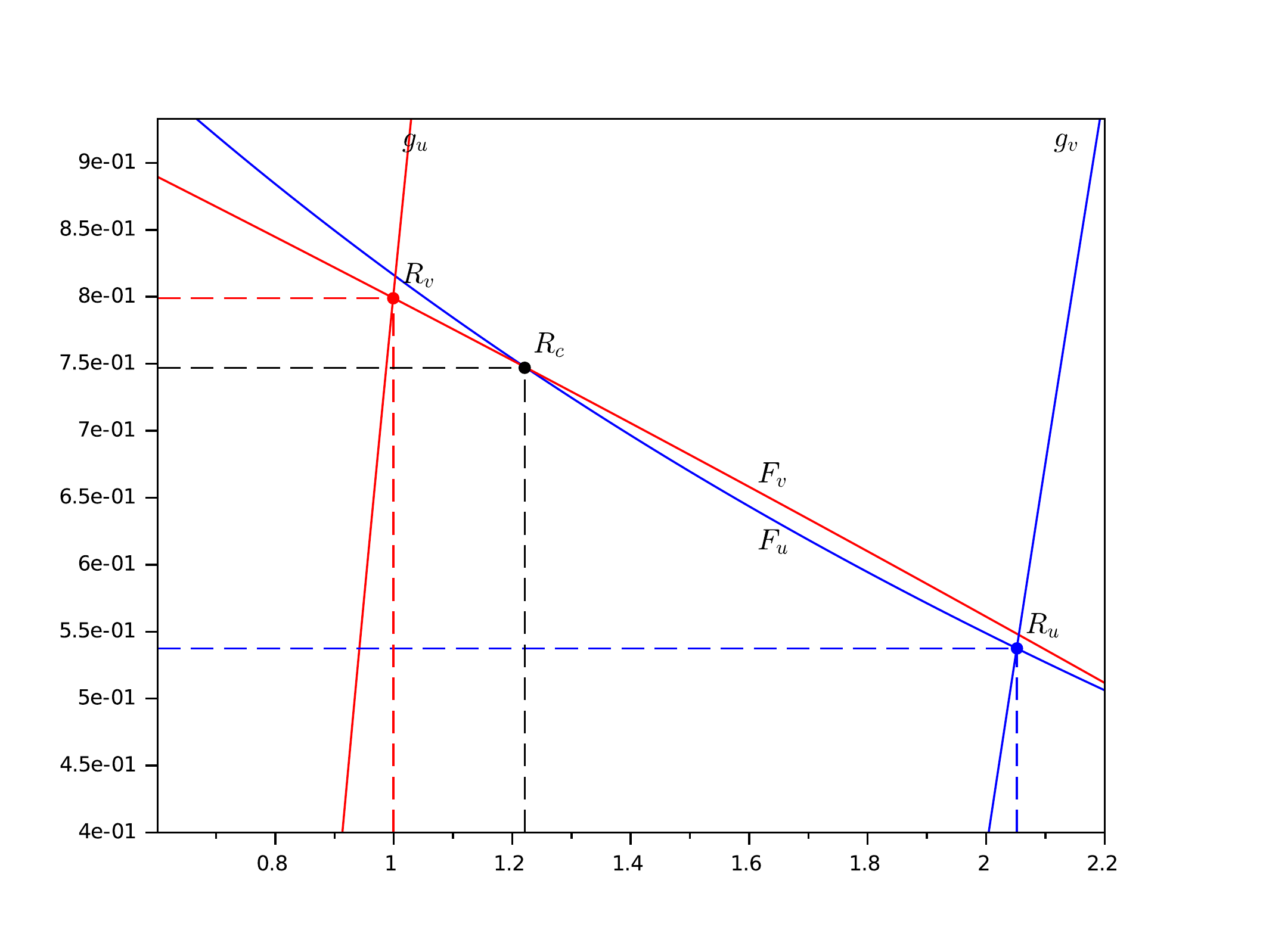}\\
 & d - Rare bi-stable case. $R_{cs}$ is associated \\
 & to a stable equilibrium. $R_{cu}$ is associated to\\
 c - Typical extinction case. Species $u$ & an unstable equilbrium. $E_u$ is stable,\\
 goes to extinction. & $E_v$ is unstable.\\
\includegraphics[scale = 0.3]{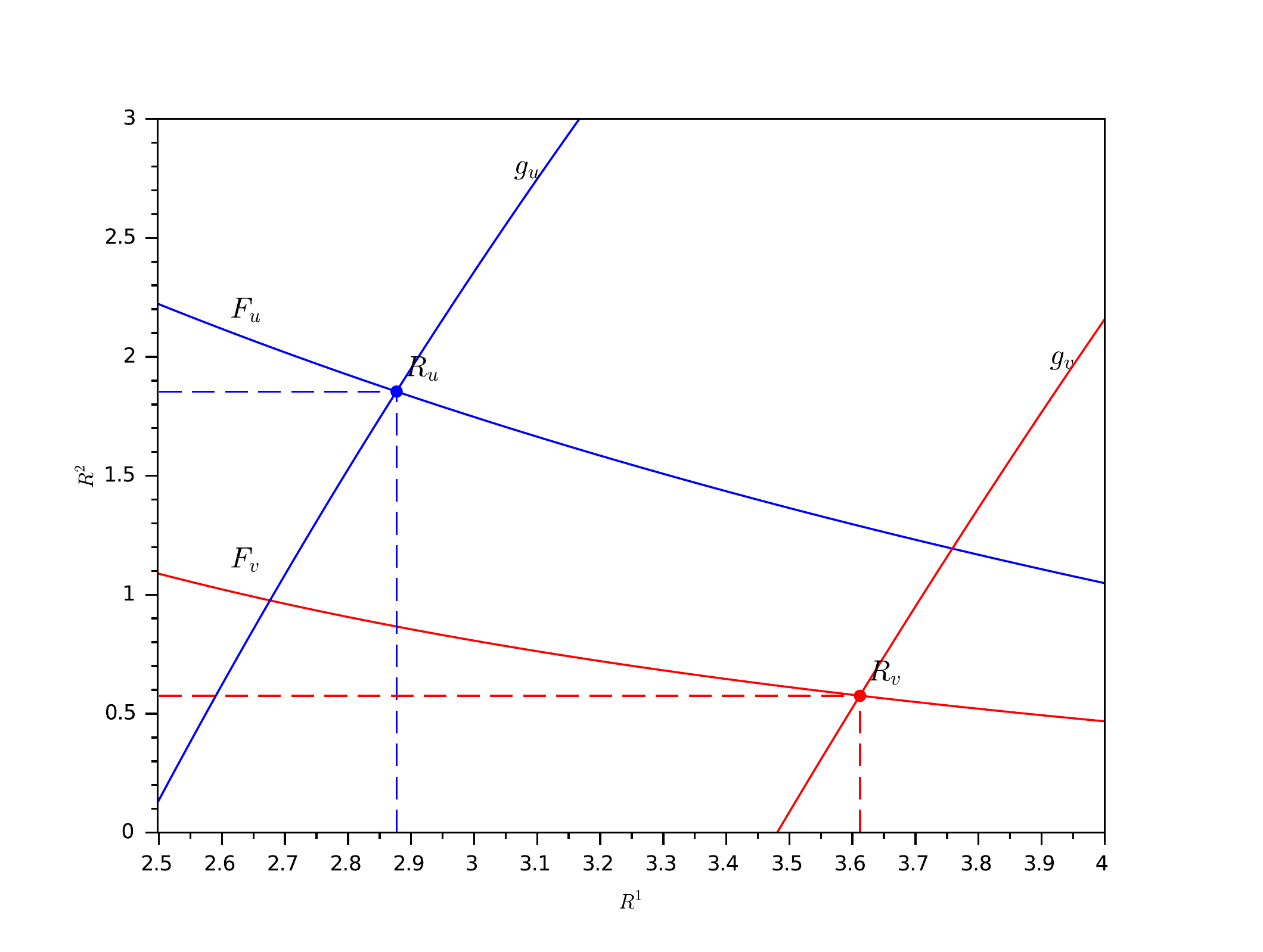} & \includegraphics[scale = 0.3]{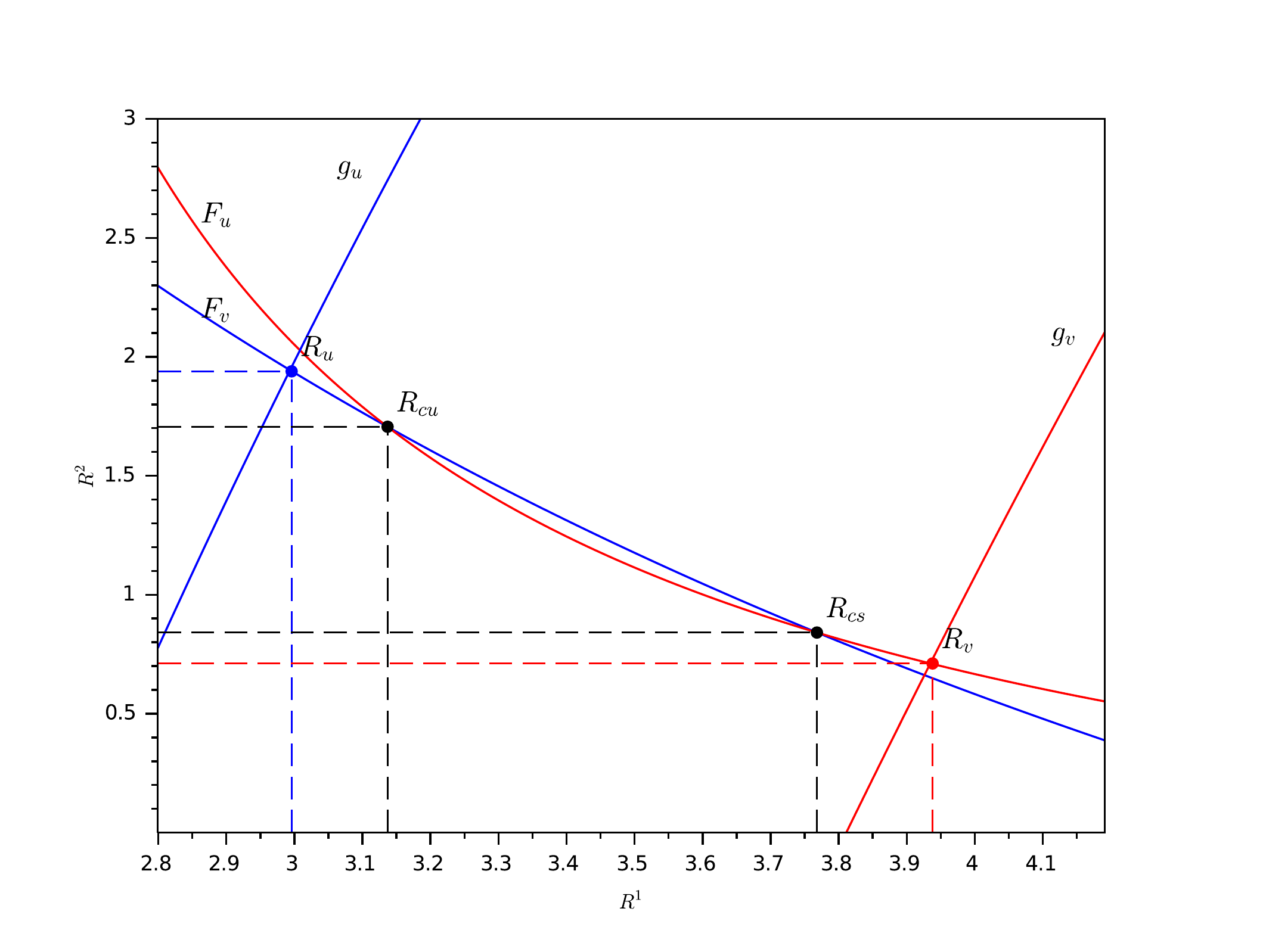}\\
\end{tabular}
\caption{The graph of the functions $F_w$ and $g_w$, $w\in\{u,v\}$ are sufficient to describe the global dynamics of \eqref{eq:chem_deter}. 
The precise definitions of the function $F_w$ and $g_w$ are given in section \ref{section:proofs}.}
\label{fig:caspossibles}
\end{figure}

\section{Comparison of the invasion rates between the two models}
\label{section:ccl}                                                                                  
In section \ref{section:temporal}, a definition for the invasion rates in the probabilistic case is given (equat and it is proven that the signs 
of the invasion rates characterize the long time behavior of the probabilistic model. Recall that in this case, we defined the invasion 
rates by :
\[
 \Lambda_w = \int \left(f_w^{1}(R)-\delta^{1}\right) d\mu_w(R,1) + \int \left(f_w^{2}(R)-\delta^{2}\right) d \mu_w(R,2),
\]
where $\mu$ is an invariant probability measure of $(Z_t)$ on $M_{0,w}$.

In section \ref{section:spatial},  the invasion rates $\Gamma_w$ in the gradostat model are defined as the maximum eigenvalue of certain two dimensional matrices and the theorem \ref{thm:eq2} shows that the sign of these 
invasion rates characterize (essentially)  the behavior of the solutions of the gradostat model. 

In this section, we aim to give a qualitative comparison of the two definition of the invasion rates in order to discuss the similarities
and the differences of the two models we considered.

\subsection{Comparison of the invasion rates in the one species case}
\label{subsection:gammau}
Let us first look at the one species case. The following theorem deals with the probabilistic definition of the invasion rate of 
species $w$.


\begin{thm}\label{thm:inv_prob1}
Let us assume that $R_0^1<R_0^2$ and set $\gamma^j= \frac{\lambda^j}{\delta^j}$. The process $(Z_t)$ has a unique invariant measure when 
it is restricted to $M_{0,w}$. The invasion rate of species $w$ is given by:
\[
\Lambda_w^0 = \frac{\gamma^1+\gamma^2}{\lambda^1+\lambda^2} \dE \left[ \Phi(B) \right].
\]
Where $B$ is a random variable following a Beta law of parameters $(\gamma^1,\gamma^2)$ and:
\[
\Phi(x) = \delta^2(1-x)\left(f^1\left((R_0^2-R_0^1)x+R_0^1\right)-\delta^1\right)+\delta^1 x\left(f^2\left((R_0^2-R_0^1)x+R_0^1\right)-\delta^2\right).
\]
\end{thm}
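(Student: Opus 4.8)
\emph{Proof idea.} The plan is to reduce the statement to the stationary analysis of a scalar linear switched ODE and then solve for its invariant density in closed form. In the one-species regime the process $(R_t,W_t,I_t)$ restricted to $M_{0,w}=\{W=0\}$ has a vanishing population component, so it is governed solely by $\dot R_t=\delta^{I_t}(R_0^{I_t}-R_t)$. First I would observe that $[R_0^1,R_0^2]$ is forward invariant and globally attracting for both flows, hence any invariant measure is supported on $[R_0^1,R_0^2]\times\{1,2\}$, and then apply the affine change of variable $R=(R_0^2-R_0^1)y+R_0^1$, which turns the dynamics into $\dot y=-\delta^1 y$ in state $1$ and $\dot y=\delta^2(1-y)$ in state $2$ on $[0,1]$.

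For uniqueness of the invariant measure I would note that every point of $(0,1)$ is reachable (state $2$ moves $y$ strictly to the right, state $1$ strictly to the left), the vector fields are smooth and the restricted state space is compact, so the PDMP is uniquely ergodic with an invariant measure that is absolutely continuous on $(0,1)$ in each component; this can be quoted from the general ergodic theory of switched flows or obtained by a short minorization argument. Write $\rho_1,\rho_2$ for the densities in states $1$ and $2$. They solve the stationary forward system $\partial_y(-\delta^1 y\,\rho_1)=-\lambda^1\rho_1+\lambda^2\rho_2$ and $\partial_y(\delta^2(1-y)\rho_2)=\lambda^1\rho_1-\lambda^2\rho_2$. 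Adding these shows that the flux $J(y):=-\delta^1 y\,\rho_1(y)+\delta^2(1-y)\rho_2(y)$ is constant, and since $J(0)=\delta^2\rho_2(0)\ge 0$ while $J(1)=-\delta^1\rho_1(1)\le 0$, necessarily $J\equiv 0$, i.e.\ $\delta^1 y\,\rho_1(y)=\delta^2(1-y)\rho_2(y)$. Substituting this back and writing $g(y)=\delta^1 y\,\rho_1(y)$, the first equation becomes the separable ODE $g'/g=\gamma^1/y-\gamma^2/(1-y)$, so $g(y)=c\,y^{\gamma^1}(1-y)^{\gamma^2}$; hence $\rho_1(y)\propto y^{\gamma^1-1}(1-y)^{\gamma^2}$ and $\rho_2(y)\propto y^{\gamma^1}(1-y)^{\gamma^2-1}$, i.e.\ both are constant multiples of the $\mathrm{Beta}(\gamma^1,\gamma^2)$ density $\beta$ times $(1-y)$, resp.\ $y$.

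Finally, a short normalisation using the Euler Beta integral yields $\rho_1(y)=\tfrac{(\gamma^1+\gamma^2)\delta^2}{\lambda^1+\lambda^2}(1-y)\beta(y)$ and $\rho_2(y)=\tfrac{(\gamma^1+\gamma^2)\delta^1}{\lambda^1+\lambda^2}\,y\,\beta(y)$; as a consistency check these put masses $\tfrac{\lambda^2}{\lambda^1+\lambda^2}$ and $\tfrac{\lambda^1}{\lambda^1+\lambda^2}$ on states $1$ and $2$, i.e.\ the stationary law of $(I_t)$. Inserting $\rho_1,\rho_2$ into $\Lambda_w^0=\int(f_w^1(R)-\delta^1)\,d\mu_w^0(R,1)+\int(f_w^2(R)-\delta^2)\,d\mu_w^0(R,2)$ and reverting the substitution $R=(R_0^2-R_0^1)y+R_0^1$ collects the two integrals against $\beta$ into $\tfrac{\gamma^1+\gamma^2}{\lambda^1+\lambda^2}\dE[\Phi(B)]$ with $B\sim\mathrm{Beta}(\gamma^1,\gamma^2)$ and $\Phi$ exactly as in the statement. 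I expect the main obstacle to be the rigorous justification of unique ergodicity together with absolute continuity of $\mu_w^0$ (so that the Liouville computation is legitimate), and to a lesser extent the boundary-flux argument pinning the integration constant to zero; the remaining steps are routine manipulations of Beta integrals.
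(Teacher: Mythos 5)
Your proposal is correct and follows essentially the same route as the paper: reduce to the scalar switched ODE $\dot R_t=\delta^{I_t}(R_0^{I_t}-R_t)$ on $[R_0^1,R_0^2]$, solve the stationary equations for the two densities via the zero-flux relation, recognise the Beta density after the affine rescaling, and normalise with Euler's Beta integral. The one genuine (small) improvement is your derivation of $J\equiv 0$ from the constancy of the flux together with the sign constraints $J(0)\ge 0$ and $J(1)\le 0$, which replaces the paper's heuristic assumption that $\rho^j(R_0^{\overline{j}})=0$ ``seems natural''.
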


The unicity of the measure invariant is fairly obvious given the definition of the process $(Z_t)$ restricted to $M_{0,w}$. Its explicit
expression allows to obtain the announced expression for the invasion rate $\Lambda_w^0$. The computation of the invariant measure is postponed
to the last section \ref{proof:inv_prob1} of this article.

Recall that the jump rates of the Markov process $(I_t)$ on the state space $\{1,2\}$ are given by: $\lambda^1 = s\lambda$ and
$\lambda^21 = (1-s)\lambda$ with $\lambda\in\dR$ and $s\in(0,1)$.
\begin{prop}\label{prop:prop_convexity}
The invasion rate $\Lambda_w^0=\left(\frac{s}{\delta^1}+\frac{1-s}{\delta^2}\right)\dE \left[ \Phi(B) \right]$ is monotone acording to the variable $\lambda$.
\end{prop}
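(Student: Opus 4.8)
The plan is to make the $\lambda$-dependence fully explicit and then differentiate. First I would recall from Theorem~\ref{thm:inv_prob1} that $\gamma^j=\lambda^j/\delta^j=\lambda s/\delta^1$ for $j=1$ and $\lambda(1-s)/\delta^2$ for $j=2$, so that $\gamma^1+\gamma^2=\lambda\left(\frac{s}{\delta^1}+\frac{1-s}{\delta^2}\right)$ and $\lambda^1+\lambda^2=\lambda$. Hence the prefactor collapses: $\frac{\gamma^1+\gamma^2}{\lambda^1+\lambda^2}=\frac{s}{\delta^1}+\frac{1-s}{\delta^2}$, which is a constant independent of $\lambda$, and we obtain the stated form $\Lambda_w^0=\left(\frac{s}{\delta^1}+\frac{1-s}{\delta^2}\right)\dE[\Phi(B)]$. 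Since $\Phi$ does not depend on $\lambda$ at all (it only involves $\delta^1,\delta^2,R_0^1,R_0^2$ and the consumption function), the entire $\lambda$-dependence of $\Lambda_w^0$ is carried by the law of $B$, namely the Beta distribution with parameters $(\gamma^1,\gamma^2)=\left(\frac{\lambda s}{\delta^1},\frac{\lambda(1-s)}{\delta^2}\right)$. Writing $\alpha=s/\delta^1$ and $\beta=(1-s)/\delta^2$, the parameters are $(\lambda\alpha,\lambda\beta)$, so both scale linearly in $\lambda$ with fixed ratio $\alpha:\beta$; the mean of $B$ stays fixed at $\alpha/(\alpha+\beta)$ while the variance shrinks as $\lambda\to\infty$.

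Next I would compute $g(\lambda):=\dE[\Phi(B_\lambda)]$, where $B_\lambda\sim\mathrm{Beta}(\lambda\alpha,\lambda\beta)$, and show $g'(\lambda)$ has constant sign on $(0,\infty)$. The cleanest route is to differentiate under the integral sign using the density $p_\lambda(x)=\frac{\Gamma(\lambda\alpha+\lambda\beta)}{\Gamma(\lambda\alpha)\Gamma(\lambda\beta)}x^{\lambda\alpha-1}(1-x)^{\lambda\beta-1}$; then
\[
g'(\lambda)=\dE\!\left[\Phi(B_\lambda)\,\partial_\lambda\log p_\lambda(B_\lambda)\right],
\]
and since $\dE[\partial_\lambda\log p_\lambda(B_\lambda)]=0$ this equals $\mathrm{Cov}\!\left(\Phi(B_\lambda),\,\partial_\lambda\log p_\lambda(B_\lambda)\right)$. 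A direct computation gives
\[
\partial_\lambda\log p_\lambda(x)=\alpha\log x+\beta\log(1-x)+(\alpha+\beta)\psi(\lambda(\alpha+\beta))-\alpha\psi(\lambda\alpha)-\beta\psi(\lambda\beta),
\]
where $\psi$ is the digamma function; the deterministic terms drop out of the covariance, leaving $g'(\lambda)=\mathrm{Cov}\!\left(\Phi(B_\lambda),\,\alpha\log B_\lambda+\beta\log(1-B_\lambda)\right)$. So it suffices to show $\Phi$ and the auxiliary function $h(x):=\alpha\log x+\beta\log(1-x)$ are comonotone (or anti-comonotone) on $(0,1)$ — i.e. $\Phi'$ and $h'$ have a constant sign relationship — which by the FKG/Chebyshev correlation inequality forces the covariance to have constant sign.

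The main obstacle is the monotonicity analysis of $\Phi$ against $h$. Note $h'(x)=\frac{\alpha}{x}-\frac{\beta}{1-x}$ changes sign exactly once, at $x_0=\alpha/(\alpha+\beta)$, being positive before and negative after; so $h$ is unimodal, not monotone, and the naive Chebyshev argument does not apply directly. The fix I would use: $h$ is increasing then decreasing, so comonotonicity with $\Phi$ reduces to checking that $\Phi$ is itself monotone on $(0,1)$ (then $\Phi$ and $h$ restricted to each side of $x_0$ are monotone with a consistent sign, and one concludes via a conditioning/symmetrization argument on the covariance, e.g. splitting at $x_0$ and using that the two halves contribute covariances of the same sign). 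Monotonicity of $\Phi$ on $[0,1]$ I would verify by inspecting $\Phi'$: writing $F^j(x):=f^j\big((R_0^2-R_0^1)x+R_0^1\big)$, which is increasing in $x$ since $f^j$ is increasing and $R_0^1<R_0^2$, one has $\Phi(x)=\delta^2(1-x)(F^1(x)-\delta^1)+\delta^1 x(F^2(x)-\delta^2)$; its derivative combines the increasing terms $\delta^2(1-x)(F^1)'$, $\delta^1 x(F^2)'$ (nonnegative) with the affine part $-\delta^2(F^1(x)-\delta^1)+\delta^1(F^2(x)-\delta^2)$. I expect that under the standing hypotheses this derivative keeps a fixed sign — or, if it does not in full generality, that the intended statement uses the additional structure (same consumption function $f^1=f^2=f$, as suggested by the notation $f^j$ and $\Phi$ in Theorem~\ref{thm:inv_prob1}), in which case $\Phi(x)=\big(\delta^2(1-x)+\delta^1 x\big)(f(\ell(x))-\delta)+\text{(linear correction)}$ with $\ell(x)=(R_0^2-R_0^1)x+R_0^1$ affine increasing, and the monotonicity of $\Phi$ follows from that of $f$. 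I would pin down this point and then assemble the covariance-sign argument to conclude that $g'$, hence $\Lambda_w^0$, is monotone in $\lambda$.
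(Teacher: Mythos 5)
Your setup is sound up to the covariance identity $\frac{d}{d\lambda}\dE[\Phi(B_\lambda)]=\mathrm{Cov}\bigl(\Phi(B_\lambda),\,\alpha\log B_\lambda+\beta\log(1-B_\lambda)\bigr)$, but the reduction that follows is where the argument breaks. You rightly observe that $h(x)=\alpha\log x+\beta\log(1-x)$ is unimodal rather than monotone, so Chebyshev/FKG does not apply; however the proposed fix --- show $\Phi$ is monotone and split the covariance at the mode $x_0=\alpha/(\alpha+\beta)$ --- does not work. Splitting at $x_0$ produces conditional covariances of \emph{opposite} signs on the two halves (on one side both functions increase, on the other one increases and one decreases), plus a cross term from the law of total covariance, so no sign is forced. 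More tellingly, for affine $\Phi$ your covariance vanishes identically (the mean of $B_\lambda$ equals $\alpha/(\alpha+\beta)$ for every $\lambda$), so this covariance is blind to the monotone part of $\Phi$ and is controlled by its \emph{curvature}: the relevant property is convexity or concavity of $\Phi$, not monotonicity. Monotonicity of $\Phi$ is also not true in general here (and the suggestion to assume $f^1=f^2$ defeats the purpose of the model, which takes distinct consumption functions in the two environments).

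The paper's argument runs exactly along the axis your plan misses. Since $(\gamma^1,\gamma^2)=(\lambda\alpha,\lambda\beta)$ scale linearly in $\lambda$ with fixed ratio, the Beta laws are totally ordered in the \emph{convex order} as $\lambda$ increases (Proposition \ref{prop:conv}, taken from \cite{mz}): $\dE[\phi(B_{\lambda'})]\leq\dE[\phi(B_\lambda)]$ for $\lambda<\lambda'$ and $\phi$ convex. Hence $\lambda\mapsto\dE[\Phi(B_\lambda)]$ is monotone as soon as $\Phi$ is convex or concave on $[0,1]$, i.e.\ as soon as $\Phi''$ has constant sign there. That verification is the real content of the proof: after a change of variables, $\Phi''$ equals a positive factor times a cubic polynomial, which is shown to have a unique real root, and that root is negative, so $\Phi''$ cannot change sign on $[0,1]$. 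Your plan contains neither the convex-order comparison nor any analysis of $\Phi''$, so the essential step is missing.
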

Once again the proof of this statement requires heavy computation and is postponed to section \ref{proof:prop_convexity}. This analytical 
property on the invasion rate is used in the proof of theorem \ref{thm:intro_n1}.

An explicit expression of the invasion rate in the deterministic case is given in \eqref{eq:gaga}.
We compute the limits as $\lambda\to0$ and $\lambda\to+\infty$ of these invasion rates.
\begin{prop}
The behavior of the two model is the same when $\lambda$ is large enough.
\[
\underset{\lambda\rightarrow +\infty}{\lim} \Lambda_w^0 = \underset{\lambda\rightarrow +\infty}{\lim} \Gamma_w^0 = (1-s)\left( f_w^1(R^{\infty})-\delta^1\right)+s\left(f_w^2(R^{\infty})-\delta^2 \right)
\]
where $R^{\infty} = \frac{(1-s)\delta^1 R_0^1 + s\delta^2 R_0^2}{(1-s)\delta^1 + s\delta^2}$. \\
The behavior of the two model is not the same the same when $\lambda$ is small enough.
\begin{align*}
&\underset{\lambda\rightarrow 0}{\lim} \Lambda_w^0 = (1-s)\left( f_w^1(R_0^1)-\delta^1\right) + s \left(f_w^2(R_0^2)-\delta^2 \right),\\
&\underset{\lambda\rightarrow 0}{\lim} \Gamma_w^0 = \max\left(f_w^1(R_0^1)-\delta^1, f_w^2(R_0^2)-\delta^2 \right).
\end{align*}
\end{prop}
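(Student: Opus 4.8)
The statement has two parts: the $\lambda\to+\infty$ limit (where the two invasion rates agree) and the $\lambda\to 0$ limit (where they differ). For the deterministic rate $\Gamma_w^0$ we have the explicit formula \eqref{eq:gaga}, and for the probabilistic rate $\Lambda_w^0$ we have the Beta-integral formula from Theorem \ref{thm:inv_prob1}. The idea is to push $\lambda$ to the two extremes in each of these closed forms, using in addition the explicit description of the limiting resource level $\Sigma^j$ (the fixed point of $\dot\Sigma = (\lambda K - \Delta)\Sigma + \delta R_0$).

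\textbf{Step 1: asymptotics of $\Sigma^j=\Sigma^j(\lambda)$.} Solve $(\Delta - \lambda K)\Sigma = \delta R_0$ explicitly (a $2\times 2$ linear system). As $\lambda\to 0$ the matrix $\lambda K$ vanishes and $\Sigma^j\to R_0^j$ for each $j$. As $\lambda\to+\infty$, divide by $\lambda$: the leading term forces $K\Sigma\to 0$, i.e. $\Sigma^1=\Sigma^2$ to leading order; plugging this common value back and matching the $O(1)$ terms (i.e. expanding $\Sigma = \Sigma_0 + \lambda^{-1}\Sigma_1 + \cdots$ and solving order by order) pins down the common limit to be exactly $R^\infty = \frac{(1-s)\delta^1 R_0^1 + s\delta^2 R_0^2}{(1-s)\delta^1 + s\delta^2}$. (This is the same averaging that produces $\overline{R_0}$ in Remark \ref{rem:chem_moyen}, specialized to $R_0^1\ne R_0^2$ but with the weights $1-s$, $s$ coming from $K$.)

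\textbf{Step 2: the deterministic limits.} Substitute the Step 1 asymptotics into \eqref{eq:gaga}. For $\lambda\to+\infty$: $\lambda^1=s\lambda$, $\lambda^2=(1-s)\lambda$, so $-\lambda^1-\lambda^2 = -\lambda$ and $4\lambda^1\lambda^2 = 4s(1-s)\lambda^2$, while $f_w^j(\Sigma^j)-\delta^j \to f_w^j(R^\infty)-\delta^j$ stays bounded. Expanding the square root $\sqrt{(\,\cdot\,)^2 + 4s(1-s)\lambda^2}$ for large $\lambda$ — write it as $\lambda\sqrt{(1-2s)^2 + 4s(1-s) + O(1/\lambda)} = \lambda\sqrt{1 + O(1/\lambda)} = \lambda + O(1)$ — the two $\pm\lambda$ terms cancel and a careful $O(1)$ expansion of the square root yields precisely $(1-s)(f_w^1(R^\infty)-\delta^1) + s(f_w^2(R^\infty)-\delta^2)$; this is a short but genuine computation (one must track the linear term in the square-root expansion, which contributes the convex combination with weights $1-s,s$). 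For $\lambda\to 0$: every $\lambda^j\to 0$ and $\Sigma^j\to R_0^j$, so \eqref{eq:gaga} collapses to $\frac12\big(A_1 + A_2 + |A_1 - A_2|\big) = \max(A_1,A_2)$ with $A_j := f_w^j(R_0^j)-\delta^j$, which is the claimed expression.

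\textbf{Step 3: the probabilistic limits.} Use $\Lambda_w^0 = \frac{\gamma^1+\gamma^2}{\lambda^1+\lambda^2}\,\dE[\Phi(B)]$ with $\gamma^j=\lambda^j/\delta^j$, $B\sim\mathrm{Beta}(\gamma^1,\gamma^2)$, and $\Phi$ as in Theorem \ref{thm:inv_prob1}; note $\frac{\gamma^1+\gamma^2}{\lambda^1+\lambda^2} = \frac{s}{\delta^1}+\frac{1-s}{\delta^2}$ is independent of $\lambda$ (Proposition \ref{prop:prop_convexity}), so only $\dE[\Phi(B)]$ moves. As $\lambda\to 0$: $\gamma^1,\gamma^2\to 0$, and $\mathrm{Beta}(\gamma^1,\gamma^2)$ converges weakly to the Bernoulli law putting mass $\frac{\gamma^2}{\gamma^1+\gamma^2} = \frac{(1-s)\delta^2}{s\delta^1+(1-s)\delta^2}$... wait — more carefully, mass $\frac{\gamma^1}{\gamma^1+\gamma^2}$ on $1$ (which simplifies, using $\gamma^j=\lambda^j/\delta^j=\,$const$\cdot\lambda$ in the ratio, to $\frac{s/\delta^1}{s/\delta^1+(1-s)/\delta^2}$) and the complementary mass on $0$. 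Evaluating $\Phi(0)$ and $\Phi(1)$ — at $x=1$ the argument of both $f$'s is $R_0^2$ and the first bracket's prefactor $\delta^2(1-x)$ vanishes, leaving $\delta^1(f^2(R_0^2)-\delta^2)$; at $x=0$, symmetrically, $\delta^2(f^1(R_0^1)-\delta^1)$ — and taking the expectation against this limiting Bernoulli, the $\delta^1,\delta^2$ factors cancel against the denominators in the weights, giving $(1-s)(f_w^1(R_0^1)-\delta^1) + s(f_w^2(R_0^2)-\delta^2)$. As $\lambda\to+\infty$: $\gamma^1,\gamma^2\to+\infty$ with $\gamma^1/(\gamma^1+\gamma^2)\to \frac{s/\delta^1}{s/\delta^1+(1-s)/\delta^2}$, so $B$ concentrates (Beta law with both parameters large has variance $\to 0$) at that deterministic mean; by continuity of $\Phi$, $\dE[\Phi(B)]\to\Phi(x_\infty)$ with $x_\infty = \frac{s/\delta^1}{s/\delta^1+(1-s)/\delta^2}$. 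A direct check that $(R_0^2-R_0^1)x_\infty + R_0^1 = R^\infty$ and that $(\frac{s}{\delta^1}+\frac{1-s}{\delta^2})\Phi(x_\infty)$ simplifies to $(1-s)(f_w^1(R^\infty)-\delta^1)+s(f_w^2(R^\infty)-\delta^2)$ finishes this case and matches the deterministic limit.

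\textbf{Main obstacle.} Nothing here is deep — it is all limits of explicit formulas — but the bookkeeping in two places needs care: (a) the cancellation of the $\pm\lambda$ divergence in the square root of \eqref{eq:gaga} as $\lambda\to\infty$, where one must correctly extract the $O(1)$ term and see the weights $(1-s,s)$ emerge; and (b) verifying that the three a priori different-looking weightings — the Beta-mean $x_\infty$, the square-root expansion in \eqref{eq:gaga}, and the averaged resource $R^\infty$ — all produce the same convex combination. It is also worth stating explicitly which mode of convergence of $B$ is used (weak convergence plus boundedness and continuity of $\Phi$ on $[0,1]$ suffices, since $\Phi$ is continuous and $[0,1]$ is compact), so that passing the limit through the expectation is rigorous in both regimes.
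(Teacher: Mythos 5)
Your computation is correct and is precisely the ``simple computation'' that the paper alludes to but does not write out (the paper only remarks that the $\lambda\to+\infty$ coincidence can also be seen via averaging of vector fields); all three steps --- the asymptotics of $\Sigma$, the eigenvalue expansion, and the Beta-law limits (degenerate Bernoulli as $\gamma^1,\gamma^2\to 0$, concentration at the mean as $\gamma^1,\gamma^2\to+\infty$) --- check out, and the weights $(1-s,s)$ do emerge consistently in all three places.

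One point you should make explicit rather than leave implicit: your large-$\lambda$ expansion in Step~2 uses the discriminant
$\left(f_w^1(\Sigma^1)-\delta^1 - f_w^2(\Sigma^2)+\delta^2-\lambda^1+\lambda^2\right)^2+4\lambda^1\lambda^2$,
which is the correct $\mathrm{tr}^2-4\det$ for the matrix $A_w$ of \eqref{eq:jaco}; this is where your $(1-2s)^2+4s(1-s)=1$ comes from. The formula \eqref{eq:gaga} as printed omits the $-\lambda^1+\lambda^2$ inside the square, and taken literally it would give $\Gamma_w^0\to-\infty$ for $s\neq 1/2$ (since $2\sqrt{s(1-s)}<1$). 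So you have silently corrected a typo in \eqref{eq:gaga}; say so, otherwise a reader comparing your Step~2 to the displayed formula will not be able to reproduce the cancellation. With that caveat recorded, the argument is complete.
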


\begin{rem}\label{rem:agg}
Though these results are easily obtained by a simple computation, the fact that the limits of the invasion rates are the same when 
$\lambda$ goes to $+\infty$ is the consequence of some already known results on the averaging of vector fields. Under some condition over 
the switching vector fields, it is proven in \cite{StrickNaim} that a process built from switching between the different 
vector fields converges in law to the deterministic solution of the aggregated system of the vector fields defined in \ref{rem:chem_moyen}.
\end{rem}

Numerical simulations are presented in \ref{fi:compa1} for two sets of data $\Pi_1$ and $\Pi_2$ defined in the table \ref{tab:data}.

\begin{table}
	\centering
 \begin{tabular}{|c|c|}
  \hline
  $\Pi_1$ & $\Pi_2$ \\
  \hline
  $(a^1,a^2) = (1.1,2)$ & $(a^1,a^2) = (1.1,2)$  \\
  $(b^1,b^2) = (0.4,4)$ & $(b^1,b^2) = (0.05,2)$   \\
  $(\delta^1,\delta^2) = (1,1)$ & $(\delta^1,\delta^2) = (1,1)$  \\
  $(R_0^1,R_0^2) = (10,1)$ & $(R_0^1,R_0^2) = (0.55,2.1)$  \\
  \hline
 \end{tabular}
	\caption{Set of data used in figure \ref{fi:compa1}.}
	\label{tab:data}
\end{table}

\begin{center}

\end{center}


\begin{figure}
\begin{center}
 \begin{tabular}{cc}
  \includegraphics[scale=0.3]{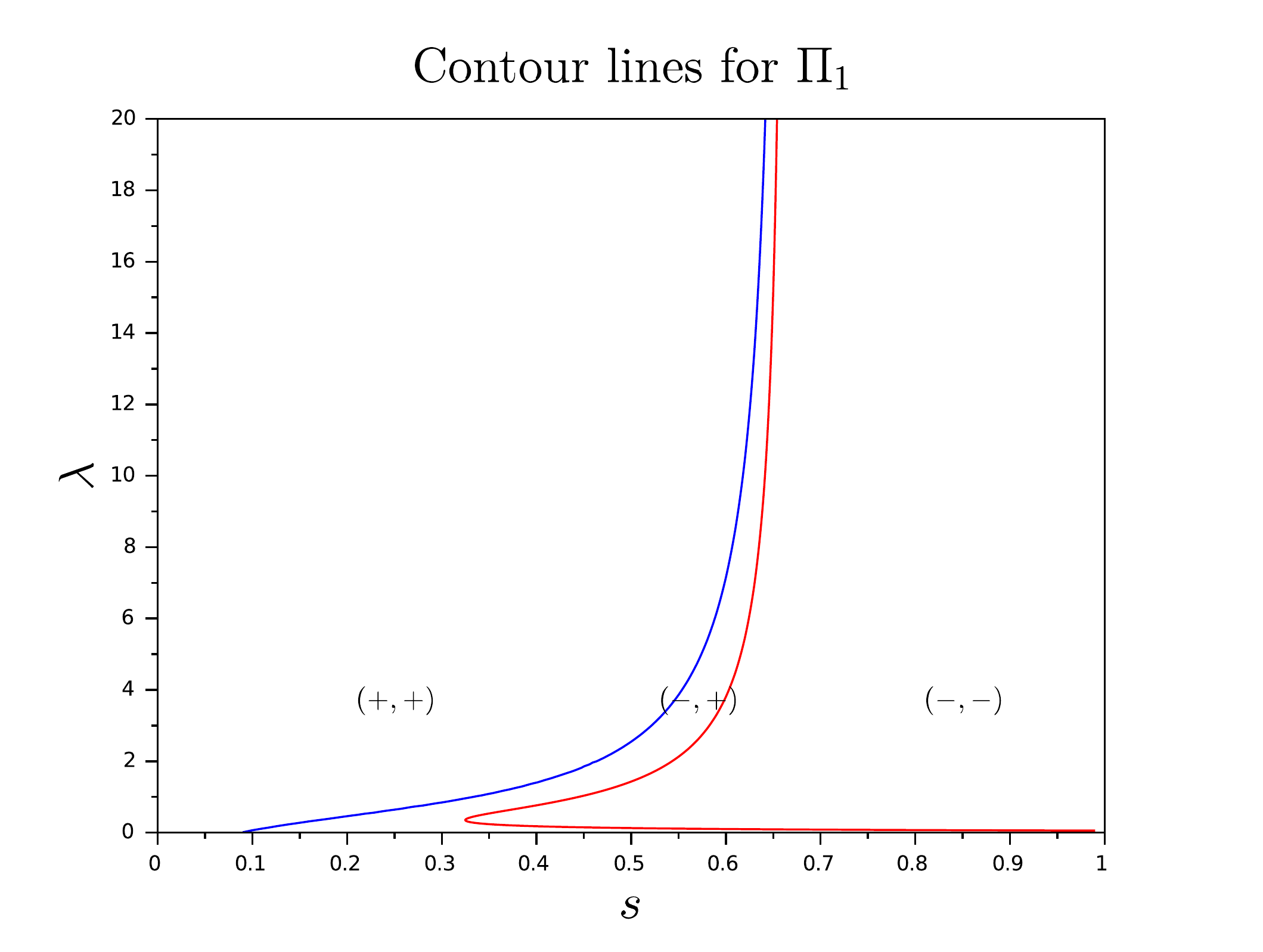} & \includegraphics[scale=0.3]{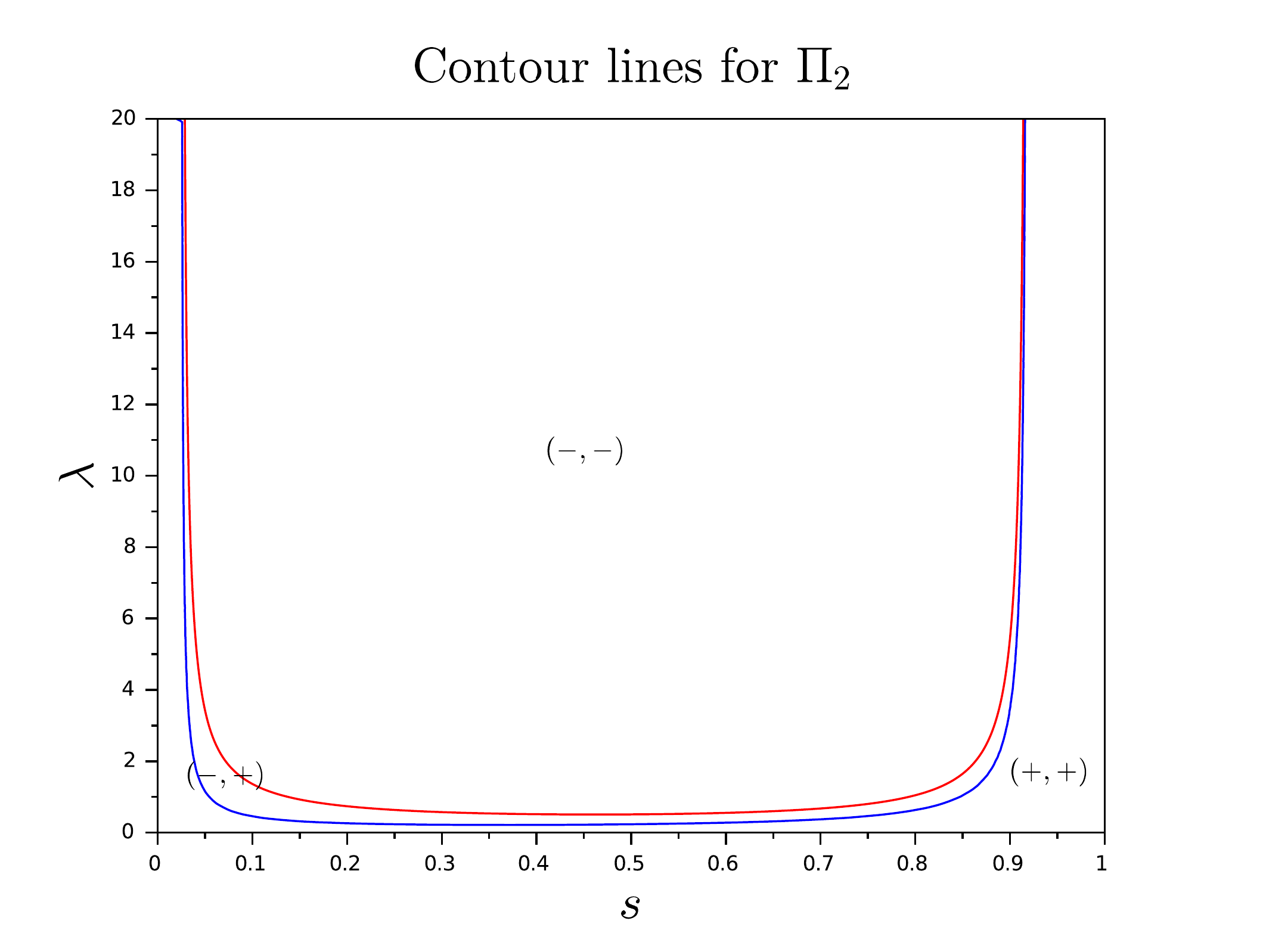}  
 \end{tabular}
 \caption{
Comparisons of the zero level  lines  for $\Gamma^0$ and $\Lambda^0$ for the two 
sets of data $\Pi_1$ and $\Pi_2$. The color blue makes reference to $\Lambda^0$ (probabilistic invasion rate) and the red color 
makes reference to $\Lambda^0$ (deterministic invasion rate).
 In each zone of this figure, the sign of the pair $(\Lambda^0,\Gamma^0)$
is constant and is plainly indicated by a pair of signs.}
 \label{fi:compa1}
\end{center}
\end{figure}



\subsection{Comparison of the invasion rates in the two species case}

We  now have a qualitative discussion on the behavior of the invasion rates when two species are introduced in our models.  Recall that it is assumed here that $R_0^1 = R_0^2$.

\begin{thm}\label{thm:explicitlambda}
The invariant measure $\mu_w$ of $(Z_t)$ restricted to $M_{0,w}$ is unique. The invasion rates $\Lambda_u$ and $\Lambda_v$ are 
computable and there explicit expressions are given by:
\[
 \Lambda_w = \frac{\int h_w(x)g_{\overline{w}}(x)e^{\lambda H_{\overline{w}}(x)}dx}{\int g_{\overline{w}}(x)e^{\lambda H_{\overline{w}}(x)}dx}.
\]
Where:
\[
 h_w(x) = \frac{(f_w^2(R_0-x)-\delta^2)\lvert f_{\overline{w}}^1(R_0-x)-\delta^1 \rvert + (f_w^1(R_0-x)-\delta^1)\lvert f_{\overline{w}}^2(R_0-x)-\delta^2 \rvert}{\lvert f_{\overline{w}}^1(R_0-x)-\delta^1 \rvert +\lvert f_{\overline{w}}^2(R_0-x)-\delta^2 \rvert}
\]
\[
 g_w(x) = \left( \lvert f_w^1(R_0-x)-\delta^1 \rvert + \lvert f_w^2(R_0-x)-\delta^2 \rvert \right)\frac{\lvert f_w^1(R_0-x)-\delta^1 \rvert \lvert f_w^2(R_0-x)-\delta^2 \rvert}{x}
\]
and
\begin{align*}
 H_w(x) = -(\omega_w^1\beta_w^1 + \omega_w^2\beta_w^2)\log(x)&+ \omega_w^1\alpha_w^1 \log\left( (b_w^1+R_0-x)\lvert f_w^1(R_0-x)-\delta^1 \rvert \right)\\
 &+\omega_w^2\alpha_w^2 \log\left( (b_w^2+R_0-x)\lvert f_w^2(R_0-x)-\delta^2 \rvert \right).
\end{align*}
The constants are defined by:
$$
 \gamma_w^j = \frac{\lambda^j}{\delta^j}\frac{R_w^j}{R_0 - R_w^j},\;
 \alpha_w^j = \frac{a_w^j}{a_w^j-\delta^j},\;\beta_w^j = 1 + \frac{R_0}{b_w^j},\;
 \omega_w^1 = \frac{s}{\delta^j}\frac{R_w^1}{R_0 - R_w^1},\; \omega_w^2 = \frac{1-s}{\delta^2}\frac{R_w^j}{R_0 - R_w^2}.$$

\end{thm}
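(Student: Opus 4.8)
The plan is to derive the stated formula by explicitly computing the invariant measure $\mu_w$ of the process $(Z_t)$ restricted to the extinction set $M_{0,w}$, exploiting the fact that on $M_{0,w}$ the species $w$ is absent and $\Sigma_t \to R_0$, so the dynamics collapse to a one-dimensional PDMP. First I would fix $\overline w$ present, $w$ absent, and use $R_0^1 = R_0^2 = R_0$ together with Remark \ref{rem:chgt_var} to reduce the flow on each environment $j$ to a scalar ODE $\dot{x} = x(f_{\overline w}^j(R_0 - x) - \delta^j)$ for $x = \overline W_t$ (so $R_t = R_0 - x$). This is a one-dimensional switched system between two logistic-type vector fields on $(0, R_0)$, and I would check that the invariant-density equations for such a PDMP — the stationary Fokker--Planck / Gihman--Skorohod equations $\partial_x\!\left(\varphi^j(x)\rho^j(x)\right) = -\lambda^j\rho^j(x) + \lambda^{\overline j}\rho^{\overline j}(x)$, where $\varphi^j(x) = x(f_{\overline w}^j(R_0-x)-\delta^j)$ is the $j$-th vector field — admit a unique solution up to normalization on the interval between the two equilibria (this is exactly the structure in \cite{lv}).

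Next I would solve these stationary equations. Summing the two equations gives $\partial_x(\varphi^1\rho^1 + \varphi^2\rho^2) = 0$, and since the total flux vanishes at the boundary equilibria one gets $\varphi^1\rho^1 = -\varphi^2\rho^2$; substituting back yields a single first-order linear ODE for, say, $\rho^1$, whose solution is an explicit exponential-of-integral times a rational prefactor. Carrying out the integration — here is where the Monod form $f_{\overline w}^j(R) = a_{\overline w}^j R/(b_{\overline w}^j + R)$ is used so that $f_{\overline w}^j(R_0 - x) - \delta^j$ factors as a constant times $(R_{\overline w}^j - x)/(b_{\overline w}^j + R_0 - x)$ — produces precisely the weight $g_{\overline w}(x) e^{\lambda H_{\overline w}(x)}$, with the logarithmic terms in $H_{\overline w}$ coming from partial-fraction decomposition of $\varphi^j(x)^{-1}$ and the constants $\gamma_{\overline w}^j, \alpha_{\overline w}^j, \beta_{\overline w}^j, \omega_{\overline w}^j$ arising as the residues. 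The marginal in the $R$-variable of $\mu_w$ then has density proportional to $g_{\overline w}(R_0 - R)e^{\lambda H_{\overline w}(R_0 - R)}$ on the appropriate branch; uniqueness follows because the switched system is irreducible on the open interval and the density is integrable (one must check the mild singularities at the endpoints are integrable, which is where the sign of $\gamma_{\overline w}^j$ versus $1$ enters).

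Finally, I would plug this invariant density into the definition $\Lambda_w = \int (f_w^1(R) - \delta^1)\,d\mu_w(R,1) + \int (f_w^2(R)-\delta^2)\,d\mu_w(R,2)$. Using the flux relation $\rho^2 = -(\varphi^1/\varphi^2)\rho^1$ to express both marginals in terms of a single density and combining the two integrands, the numerator becomes $\int h_w(x) g_{\overline w}(x) e^{\lambda H_{\overline w}(x)}\,dx$ — the function $h_w$ being exactly the convex combination of $f_w^1 - \delta^1$ and $f_w^2 - \delta^2$ with weights $|f_{\overline w}^2 - \delta^2|$ and $|f_{\overline w}^1 - \delta^1|$ that results from the two Dirac-in-$I$ components weighted by the conditional masses $\rho^1, \rho^2$ — and the denominator is the normalizing constant $\int g_{\overline w}(x)e^{\lambda H_{\overline w}(x)}\,dx$, giving the claimed ratio.

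The main obstacle I anticipate is purely computational bookkeeping: correctly integrating $1/\varphi^j(x)$ via partial fractions and matching every residue to the advertised constants $\gamma, \alpha, \beta, \omega$ — in particular tracking the absolute values $|f_{\overline w}^j(R_0 - x) - \delta^j|$, which appear because the vector fields change sign across their equilibria and one must restrict to the invariant interval where the density is supported, so signs must be handled piecewise and then repackaged. A secondary technical point is justifying that the ergodic average in the definition of $\Lambda_w$ really equals $\int \mathcal A\,d\mu_w$ for \emph{this} (unique) invariant measure rather than merely some invariant measure, but uniqueness of $\mu_w$ on $M_{0,w}$ — which the reduction to an irreducible one-dimensional switched flow delivers — closes that gap, and the remaining positivity/integrability checks are exactly as in \cite{lv}.
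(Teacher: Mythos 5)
Your outline follows exactly the route the paper itself takes for this theorem (which it only sketches, remarking that the proof ``uses the same idea'' as that of Theorem \ref{thm:inv_prob1} with heavier computations): reduce to a one-dimensional switched flow via $R_t = R_0 - \overline{W}_t$, write the stationary Kolmogorov equations for the two densities, use the vanishing-flux relation $\varphi^1\rho^1 = -\varphi^2\rho^2$ to obtain a single first-order linear ODE, integrate it by partial fractions of the Monod nonlinearity on the invariant interval between the two equilibria (whence the absolute values and the constants $\omega,\alpha,\beta$), and substitute the resulting density into the definition of $\Lambda_w$. This is correct and is essentially the paper's own (omitted) argument, so no further comparison is needed.
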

Like for the theorem \ref{thm:inv_prob1}, the proof of this theorem is very computational and is postponed to the last section. This expression for
the probabilistic invasion rate is rather heavy but allows us to do some simulations.

for the deterministic case, the invasion rates $\Gamma_w$ is defined in \ref{defi:gamma} as the maximal eigenvalue of 
 the matrix
$
 M_{\overline{w}}(R_w) 
$
which is defined in \eqref{Mw} and where $R_w$ is the resource concentration at $E_w$. Though it is possible to compute $R_w$ (see section \ref{subsection:graphical}), the complexity of its expressions does not make it interesting to give it formally. However its
explicit expressions is used in the numerical simulations.

\begin{prop}\label{prop:limites}
The behavior of the two models is the same for $\lambda$ large enough.
\[
\underset{\lambda\rightarrow +\infty}{\lim} \Lambda_w = \underset{\lambda\rightarrow +\infty}{\lim} \Gamma_w = (1-s)\left( f_w^1(R_w^{\infty})-\delta^1\right) + s\left(f_w^2(R_w^{\infty})-\delta^2 \right).
\]
where $R_{w}^{\infty}$ is the unique positive solution of the equation:
\[
 (1-s)\left( f_{\overline{w}}^1(R)-\delta^1\right) + s\left( f_{\overline{w}}^2(R)-\delta^2 \right) = 0.
\]
The behavior of the two models is not same for $\lambda$ small enough:
\begin{align*}
&\underset{\lambda\rightarrow 0}{\lim} \Lambda_w = (1-s)\left( f_w^1(R_{\overline{w}}^{1,*})-\delta^1\right) + s\left(f_w^2(R_{\overline{w}}^{2,*})-\delta^2 \right),\\
&\underset{\lambda\rightarrow 0}{\lim} \Gamma_w = \max\left(f_w^1(R_{\overline{w}}^{1,*})-\delta^1, f_w^2(R_{\overline{w}}^{2,*})-\delta^2 \right).
\end{align*}
where
\[
 R_{\overline{w}}^{j,*} =\frac{b_{\overline{w}}^j\delta^j}{a_{\overline{w}}^j-\delta^j} \text{ is the solution of the equation }f_{\overline{w}}^j(R) -\delta^j =0.
\]
\end{prop}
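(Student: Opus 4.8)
\emph{Strategy.} Both pairs of limits encode the same two facts about the coupling parameter: as $\lambda\to+\infty$ the two vessels homogenise into the averaged chemostat $\varepsilon_s$ of Remark~\ref{rem:chem_moyen}, while as $\lambda\to 0$ they decouple into two independent chemostats. The plan is to treat $\Gamma_w$ by passing to the limit in its eigenvalue definition, and $\Lambda_w$ by passing to the limit in the explicit formula of Theorem~\ref{thm:explicitlambda} (equivalently, in the invariant measure $\mu_w$).

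\emph{The deterministic rate.} By Definition~\ref{defi:gamma}, $\Gamma_w$ is the dominant eigenvalue of $M_w(R_{\overline w})=\mathrm{diag}\!\big(f_w^1(R_{\overline w}^1)-\delta^1,\,f_w^2(R_{\overline w}^2)-\delta^2\big)+\lambda K$, where $R_{\overline w}=(R_{\overline w}^1,R_{\overline w}^2)$ is the resource concentration at the semi-trivial equilibrium $E_{\overline w}$. First I would determine $\lim R_{\overline w}$. As $\lambda\to 0$ the transfer term in \eqref{eq:chem_deter_1} disappears, $E_{\overline w}$ decouples into two single-vessel equilibria and $R_{\overline w}^j\to R_{\overline w}^{j,*}$. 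As $\lambda\to+\infty$, dividing the stationarity relations $W^j\big(f_{\overline w}^j(R_0-W^j)-\delta^j\big)+\lambda^j(W^{\overline j}-W^j)=0$ by $\lambda$ forces $W^1-W^2\to 0$; adding to the first relation $\tfrac{\lambda^1}{\lambda^2}$ times the second (which cancels the transfer terms) and letting $\lambda\to+\infty$ shows that the common limit $R_w^\infty:=\lim R_{\overline w}^j$ solves $(1-s)\big(f_{\overline w}^1(R)-\delta^1\big)+s\big(f_{\overline w}^2(R)-\delta^2\big)=0$, the unique positive root because every Monod function is increasing. Then I would pass to the limit in the eigenvalue: as $\lambda\to 0$, $M_w(R_{\overline w})$ tends to the diagonal matrix $\mathrm{diag}\big(f_w^j(R_{\overline w}^{j,*})-\delta^j\big)$, whose top eigenvalue is $\max_j\big(f_w^j(R_{\overline w}^{j,*})-\delta^j\big)$; as $\lambda\to+\infty$, writing $M_w(R_{\overline w})=D_\lambda+\lambda K$ with $D_\lambda\to D_\infty:=\mathrm{diag}\big(f_w^j(R_w^\infty)-\delta^j\big)$ and using that $K$ is an irreducible matrix with nonnegative off-diagonal entries whose dominant eigenvalue is $0$, with right eigenvector $(1,1)^\top$ and left eigenvector $(1-s,s)$, first-order perturbation theory for the Perron eigenvalue of the singular pencil $D_\infty+\lambda K$ gives
\[
\Gamma_w=0\cdot\lambda+\frac{(1-s,s)\,D_\infty\,(1,1)^\top}{(1-s,s)\,(1,1)^\top}+o(1)\xrightarrow[\lambda\to+\infty]{}(1-s)\big(f_w^1(R_w^\infty)-\delta^1\big)+s\big(f_w^2(R_w^\infty)-\delta^2\big).
\]

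\emph{The probabilistic rate.} Using Theorem~\ref{thm:explicitlambda} I would write $\Lambda_w=\int h_w\,d\nu_\lambda$ with $\nu_\lambda$ the probability measure of density proportional to $g_{\overline w}(x)e^{\lambda H_{\overline w}(x)}$ on the support of the $R$–marginal of $\mu_w$, whose endpoints $x=R_0-R_{\overline w}^{1,*}$, $x=R_0-R_{\overline w}^{2,*}$ are the single-vessel resource equilibria (recall $\mu_w$ is the unique invariant measure of the PDMP restricted to $M_{0,w}$, carried by a fixed compact set). For $\lambda\to+\infty$ I would use Laplace's method: $\nu_\lambda$ concentrates at the interior maximiser $x^\infty$ of $H_{\overline w}$, and differentiating $H_{\overline w}$ shows that $H_{\overline w}'(x^\infty)=0$ is exactly the averaged relation $(1-s)\big(f_{\overline w}^1(R_0-x^\infty)-\delta^1\big)+s\big(f_{\overline w}^2(R_0-x^\infty)-\delta^2\big)=0$, i.e. $R_0-x^\infty=R_w^\infty$; hence $\Lambda_w\to h_w(x^\infty)$, and since $f_{\overline w}^1-\delta^1$ and $f_{\overline w}^2-\delta^2$ have opposite signs at $R_w^\infty$ their absolute values in $h_w$ collapse, yielding $h_w(x^\infty)=(1-s)\big(f_w^1(R_w^\infty)-\delta^1\big)+s\big(f_w^2(R_w^\infty)-\delta^2\big)$, the same as $\lim\Gamma_w$. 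For $\lambda\to 0$, the slow-switching regime makes $\nu_\lambda$ concentrate at the two endpoints with masses $1-s$ and $s$ (the stationary weights of the environment chain), because between consecutive jumps — separated by $O(1/\lambda)$ — the resource relaxes exponentially fast to the single-vessel equilibrium of the current state; at the endpoint $x=R_0-R_{\overline w}^{j,*}$ one absolute value in $h_w$ vanishes, so $h_w$ there equals $f_w^j(R_{\overline w}^{j,*})-\delta^j$, and therefore $\Lambda_w\to(1-s)\big(f_w^1(R_{\overline w}^{1,*})-\delta^1\big)+s\big(f_w^2(R_{\overline w}^{2,*})-\delta^2\big)$, which by convexity of the maximum is in general strictly below $\lim\Gamma_w=\max_j(\cdots)$.

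\emph{Main obstacle.} The routine parts (the eigenvalue perturbation, the collapse of the absolute values, the $\lambda\to+\infty$ averaging of $R_{\overline w}$) are straightforward; the genuinely delicate point is the $\lambda\to 0$ behaviour of $\nu_\lambda$, i.e. of $\mu_w$. One must make the boundary-layer argument quantitative: the density $g_{\overline w}e^{\lambda H_{\overline w}}$ develops at each endpoint a near-singular factor of the form $|R-R_{\overline w}^{j,*}|^{-1+c_j\lambda}$ with $c_j>0$, so that the normalising constant is asymptotically dominated by the two boundary layers and the mass splits in the ratio of their residues; checking that this ratio is $(1-s):s$ is the computation to carry out with care (it is the two-species analogue of the fact, used for Theorem~\ref{thm:inv_prob1}, that a $\mathrm{Beta}(\gamma^1,\gamma^2)$ law with $\gamma^j\to 0$ converges to a two-point mass on $\{0,1\}$). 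A cleaner alternative avoids the explicit formula: prove the two limits by weak convergence of $\mu_w$, where tightness is immediate because $\mu_w$ lives in a fixed compact set and the limit points are identified by the averaging principle of \cite{StrickNaim} (Remark~\ref{rem:agg}) for $\lambda\to+\infty$ and by the quasi-static argument above for $\lambda\to 0$.
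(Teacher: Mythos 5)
Your proposal is sound, and in fact it supplies an argument where the paper gives none: Proposition~\ref{prop:limites} is stated without proof (as with its one-species analogue, the authors regard the limits as ``easily obtained by a simple computation'' and, for $\lambda\to+\infty$, appeal to the averaging principle of Remark~\ref{rem:agg}). Your two halves are the natural way to fill that gap. For the deterministic rate, your perturbation argument (left eigenvector $(1-s,s)$, right eigenvector $(1,1)^{\top}$ of $K$) is correct, though the same limits drop out even more elementarily from the closed-form expression of the top eigenvalue of a $2\times 2$ matrix, exactly as in \eqref{eq:gaga}: the square root expands as $\lambda+(1-2s)\bigl(X^1-X^2\bigr)+o(1)$ for large $\lambda$ and as $\lvert X^1-X^2\rvert$ for small $\lambda$. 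Your preliminary step identifying $\lim R_{\overline w}$ in each regime is the part the paper leaves entirely implicit, and it is needed; note that both it and the statement itself tacitly assume that $E_{\overline w}$ exists and stays away from the boundary in the relevant limit (e.g.\ that $\overline w$ survives alone in each isolated vessel as $\lambda\to 0$). For the probabilistic rate, your Laplace/averaging argument at $\lambda\to+\infty$ and your two-point concentration at $\lambda\to 0$ are consistent with the one case that can be checked against the paper, namely the one-species computation of Theorem~\ref{thm:inv_prob1}: there the $\mathrm{Beta}(\gamma^1,\gamma^2)$ law concentrates at $\gamma^1/(\gamma^1+\gamma^2)$ as $\lambda\to+\infty$ and converges to $\tfrac{\gamma^2}{\gamma^1+\gamma^2}\delta_0+\tfrac{\gamma^1}{\gamma^1+\gamma^2}\delta_1$ as $\lambda\to 0$, and after the prefactor $\tfrac{\gamma^1+\gamma^2}{\lambda^1+\lambda^2}$ the endpoint weights become exactly $(1-s,s)$, as you claim in the two-species setting.

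One caveat. Your description of the boundary layers --- a density behaving like $\lvert x-x^j\rvert^{-1+c_j\lambda}$ at the endpoints $x^j=R_0-R_{\overline w}^{j,*}$ --- is the correct behaviour of the invariant density of this one-dimensional switched flow (it is what the relation $F_1\rho^1+F_2\rho^2=0$ together with $(G\rho)'/(G\rho)=-\lambda^1/F_1-\lambda^2/F_2$ produces), but it does \emph{not} match the formula for $g_w$ as printed in Theorem~\ref{thm:explicitlambda}, where the product $\lvert f_w^1-\delta^1\rvert\,\lvert f_w^2-\delta^2\rvert$ sits in the numerator and would make the density vanish at the endpoints. This points to a typo in the printed theorem (the product should appear in the denominator) rather than to an error in your argument, but since your $\lambda\to 0$ limit rests on that boundary behaviour, you should either rederive the density from the generator (as in the proof of Theorem~\ref{thm:inv_prob1}) or at least note the discrepancy before using the formula.
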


Let us now compare the probabilistic and the deterministic dependance  of the invasion rates with respect to $\lambda$ and $s$ within 
the two models on particular example. In all the following figures, the blue color is associated to the species $u$ whereas the red color 
is associated to the species $v$. The different couple of signs give the couple of signs of the invasion rates $(\Lambda_u,\Lambda_v)$
in the probabilistic case and $(\Gamma_u,\Gamma_v)$ in the deterministic case.
\begin{rem} In all the figure, the zeros level sets of $\Gamma_w$ and $\Lambda_w$ have the same vertical asymptotes since  the two models are described by the {\em same} averaged chemostat $\varepsilon_s$ as $\lambda\to+\infty$ and that $\varepsilon_s$ satisfy the PEC.
\end{rem}
\begin{figure}[h!]
				\hspace{-1.5cm}\begin{tabular}{cc}
		a - Typical coexistence situation.& b - Typical bistability sitation.\\ \includegraphics[scale=0.45]{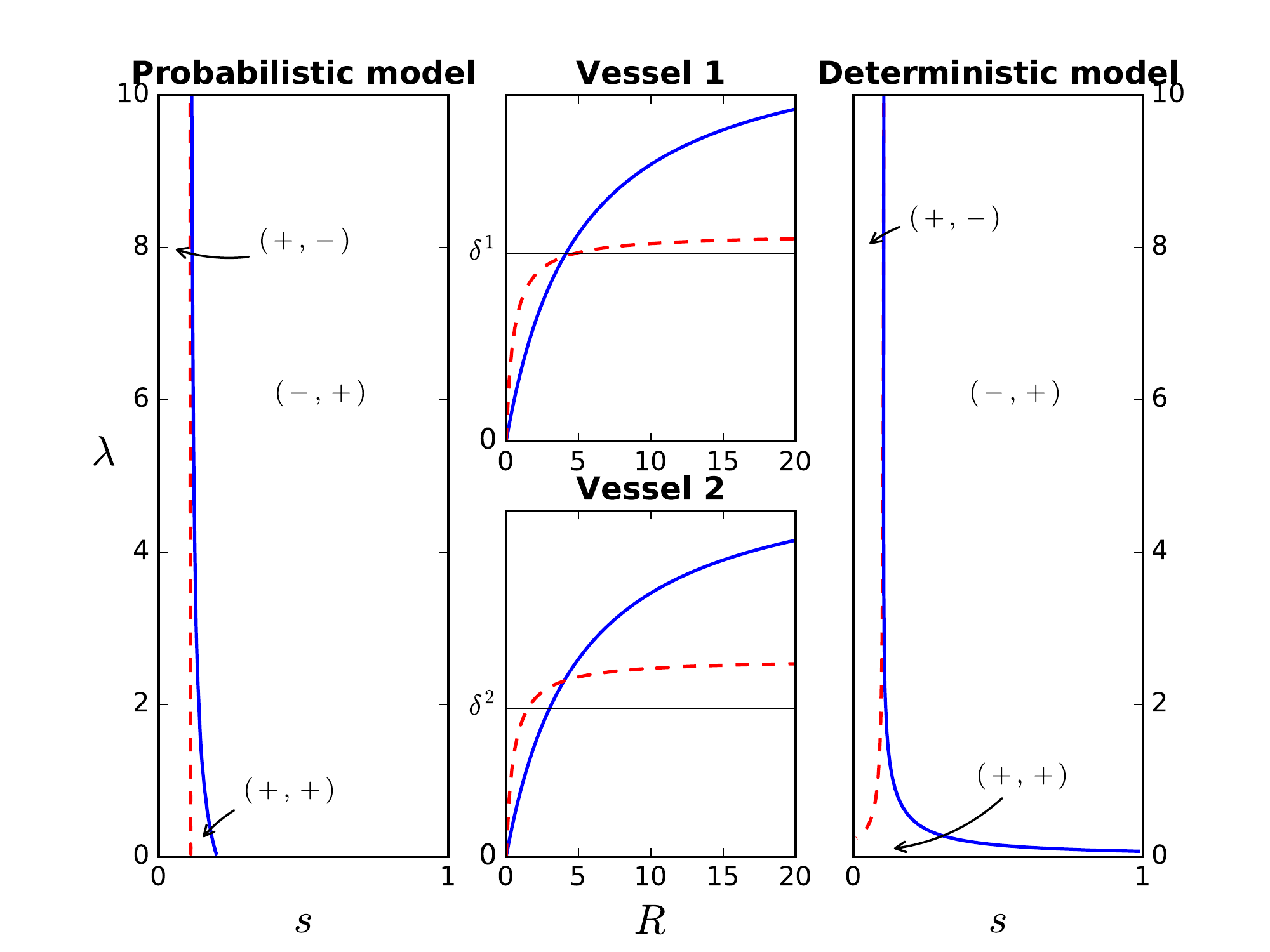}& \includegraphics[scale=0.45]{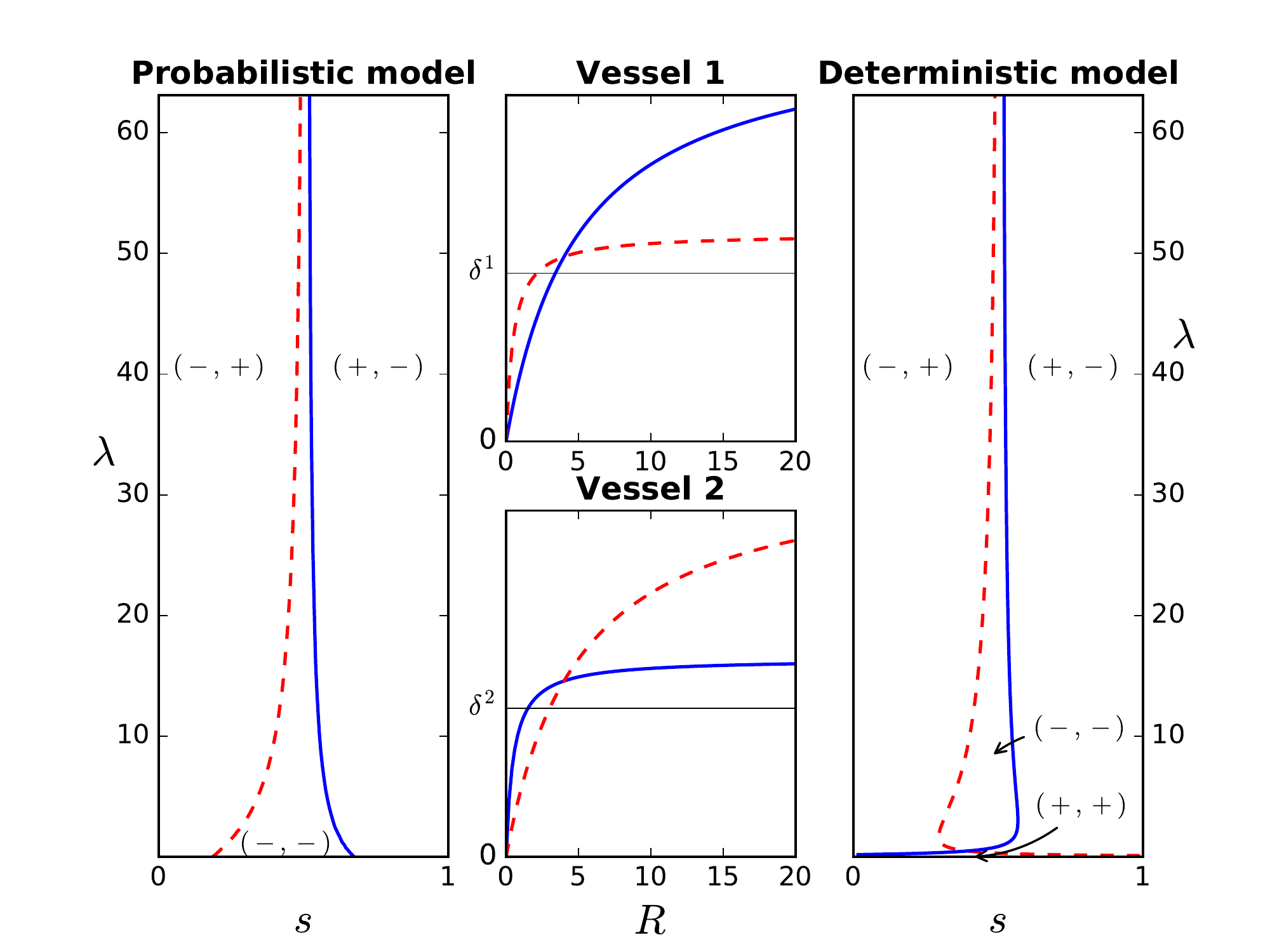}\end{tabular}
				\caption{Both species is the best competor in one vessels.
							a - An appropriate averaged ratio between the vessels leads coexistence $(a_u^1,a_u^2,a_v^1,a_v^2)=(4.2,4,2.1,2)$, $(b_u^1,b_u^2,b_v^1,b_v^2)=(5, 5, 0.5, 0.5)$, 
							$(\delta^1,\delta^2)=( 1.9,1.5)$ and $R_0=8$.
							b - The role of species are reversed between the vessels. For the probabilistic model, there is either exclusion or bistability. The same holds for the deterministic case, exept that small diffusion permits coexistence.
								$(a_u^1,a_u^2,a_v^1,a_v^2)=(4.2,2,2.1,4)$, $(b_u^1,b_u^2,b_v^1,b_v^2)=(5, 0.5, 0.5, 5)$, 
							$(\delta^1,\delta^2)=( 1.7,1.5)$ and $R_0=8$.
						}
						\label{fig:fig1}
\end{figure}

				\begin{figure}[h!]
				\hspace{-1.8cm}\begin{tabular}{cc}
		a - Two vessels favorable to the  species $u$.& b -Odd bistability  in the deterministic model.\\
		\includegraphics[scale=0.40]{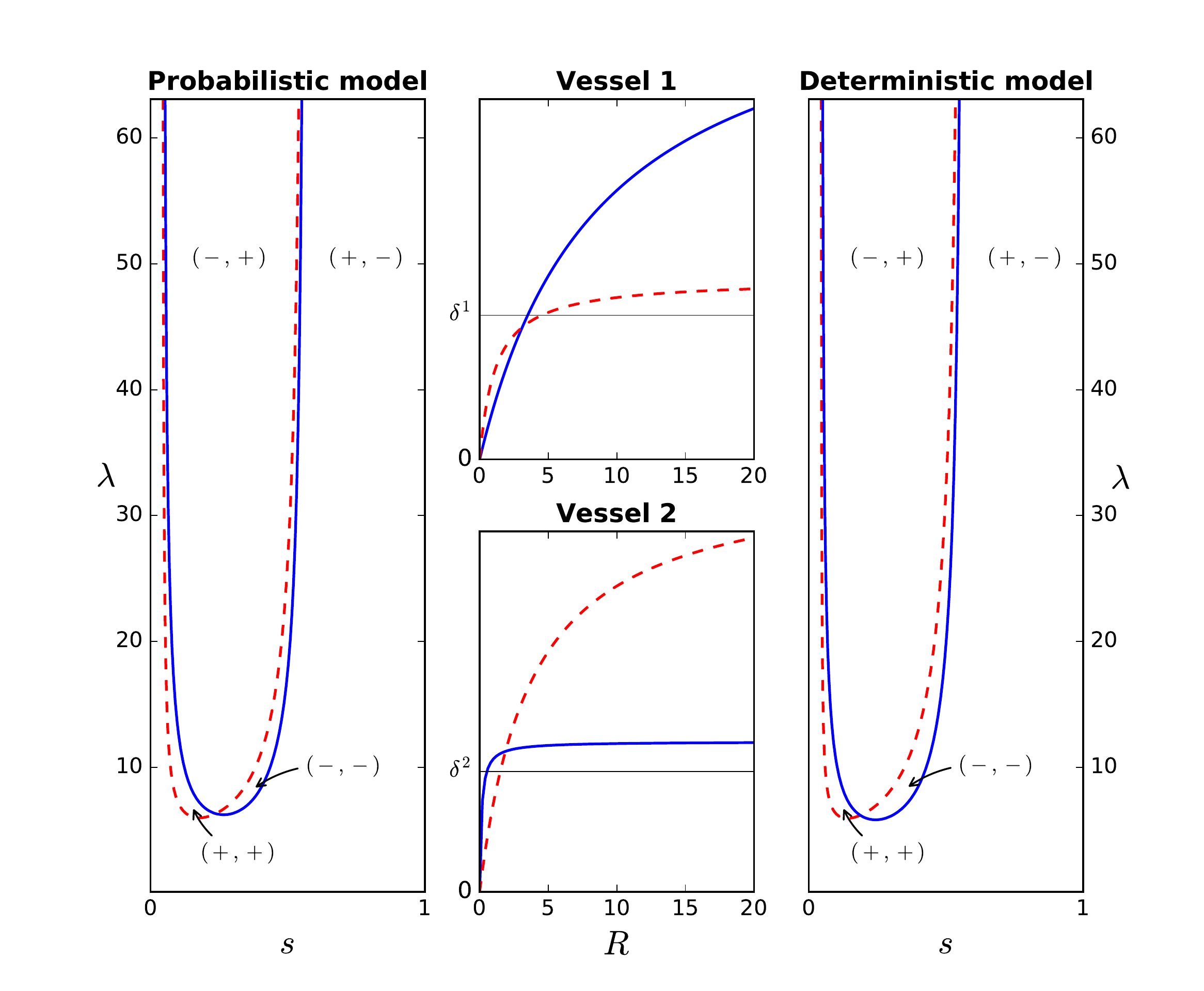}& 
		\hspace{-1.37cm}\includegraphics[scale=0.35]{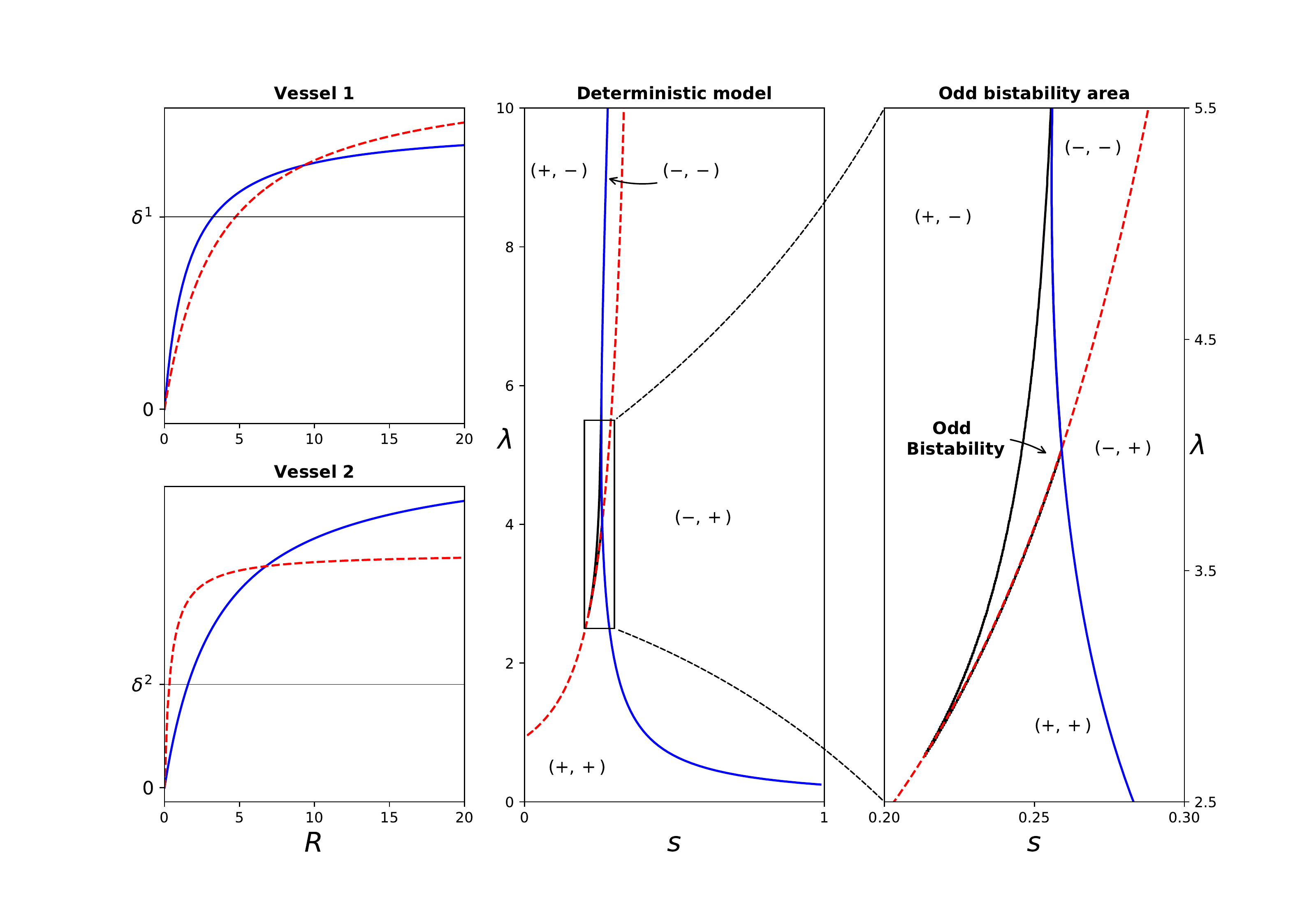}
		\end{tabular}
				\caption{Two interesting situations.
				a - The two vessels are favorable to the same species. Depending on $\lambda$ and $s$, each situation may occurs for both models (extinction of $u$ or $v$, exclusive bistability or coexistence). $(a_u^1,a_u^2,a_v^1,a_v^2)=(3.5,2.5,1.25,7)$,
  $(b_u^1,b_u^2,b_v^1,b_v^2)=(8.75, 0.125, 1.125, 3.75)$, $(\delta^1,\delta^2)=( 1,2)$ and $R_0 = 7$. 
							b - A situation like in figure \ref{fig:fig1}-a with an odd bistable area in the deterministic model (the probabilistic model behaves like the one figure \ref{fig:fig1}-a). We show only the deterministic model and make a zoom on the odd bistable area in a $(+,-)$ area. This zone corresponds to the case 3-(b) in the theorem \ref{thm:eq2}.
   $(a_u^1,a_u^2,a_v^1,a_v^2)=(3.7,3.6,4.4,2.5)$, 
	$(b_u^1,b_u^2,b_v^1,b_v^2)=(1.55, 3.55, 3.6, 0.4)$,
	$(\delta^1,\delta^2)=( 2.5,1.1)$ and $R_0=20$.
	}
\label{fig:fig2}
\end{figure}


%

\begin{rem}\label{rem:mono2}
Numerically, the invasion rates $\Lambda_w$ seem to have a monotonous behavior according to $\lambda$ just like in the case $n=1$. Sadly the complexity
of their expressions does not allow us to prove it. We will conjecture it. 
Under this conjecture, we do not need the assumption $H_w$ in the theorem \ref{thm:intro_n2}.\\
Ours numerical examples shows that this is not the cases for the deterministic model, even for $n=1$ (see figure \ref{fi:compa1}-a, \ref{fig:fig1}-b and \ref{fig:fig2}-b).
\end{rem}

\subsection{Concluding remarks}\label{subsection:cclccl}
Let us conclude on the similarities and differencies between the two models we studied in this chapter. For each models we gave a 
definition of the invasion rates of the introduced species which depend only on the parameters of the systems. Despite the differences of 
their mathematical nature, theorem \ref{thm:intro_n2} and \ref{thm:eq2} show that the long-time behaviors of the two models essentially depend 
on the signs of the invasion rates. Hence, we compared the two models by comparing the behavior of the invasion rates according 
to the parameters $(s,\lambda)$ (where $\lambda^1 = s\lambda$ and $\lambda^2 = (1-s)\lambda$). 
In the probabilistic case, $(\lambda^1,\lambda^2)$ are the parameters of the 
Markov chain governing the switching between the environments whereas in the deterministic case, $(\lambda^1,\lambda^2)$ are the exchange 
parameters between the two vessels. 

From the previous theorems and numerical simulations come the following similarities between the two models:
\begin{itemize}
 \item When the invasion rates are positive (resp. negative) for $u$ and $v$, the probabilistic system and the deterministic system
are in a coexistence state (resp. bistable state). Moreover, we proved numerically that it is possible to have bistability with two introduced
species and two vessels. This numerical result is similar to the result of \cite{grad5} where they proved in their particular case (dilutions rates and 
consumption functions not depending on the vessel, two introduced species) that at least three vessels are needed for the existence of an 
unstable coexistence equilbrium.
 \item The limits of the invasion rates when $\lambda$ goes to infinity are the same for both models. We saw that the reason behind this 
result is the averaging phenomenon occuring when $\lambda$ is large enough implying that both systems behave like the averaged chemostat
$\varepsilon_s$. Graphically, we see that the zero contour lines of the invasion rates are really alike for $\lambda$ large enough and 
have the same asymptote when $\lambda$ goes to infinity.
\end{itemize}
The main differences between our competition models are the following:
\begin{itemize}
 \item In the probabilistic model, when the invasion rates have opposite signs, only one species survives, the one with the positive 
invasion rate. However, in the deterministic model, when the invasion rates have opposite signs, it is possible for the system to be 
in an ``odd'' bistable state where one of the stable stationary equilibrium is a coexistence equilibrium an the other a semi-trivial solution.
 \item The most important difference between the two models occurs when $\lambda$ is close to zero because the limits of the invasion rates
when $\lambda$ goes to zero are different. We can interpret this difference by the difference of nature between the two
models when $\lambda$ is very small. For the probabilistic model, $\lambda$ very small implies that the process follows for a very long
time the flow of each chemostat $\varepsilon^1$ and $\varepsilon^2$ and the invasion rates measures the averaging of the behavior 
of each flows. But in the deterministic case, when $\lambda$ is very small, there are almost no exchanges between the two vessels implying 
that the system almost behaves like two isolated chemostats with a very small diffusion between them.
\end{itemize}

We give here a little discussion over the parameter restrictions we did on our models. First, note that the most important parameters 
involved in the heterogenity of our two models are the quantities $R_w^j$ which are the minimum resource quantities needed by species $w$ 
to survive in the vessel $j$ (when the vessels are isolated). Recall that $R_w^j$ is solution of the equation:
\[
 f_w^j(R) - \delta_w^j = 0
\]
where $f_w^j$ are the consumption functions and $\delta_w^j$ the dilution rates. As a consequence, allowing the consumption
functions or the dilution rates to depend on $w$ and $j$ is the easiest way to allow the parameters $R_w^j$ to be different according to 
$w$ and $j$.

Note that in the probabilistic model we had to assume that the ressource entries $R_0^j$ are equal in order to reduce the system and do 
some computations. But this hypothesis is not necessary in the deterministic model where we claim that the computations 
are still possible. In fact, in \cite{sw}, the authors model the environment heterogeneity with a different resource input for each vessel,
and thanks to this heterogeneity, a coexistence stationary equilibrium may appear. In our case, we model the environment heterogeneity by 
taking vessel dependant consumption functions and dilution rates.

In this paper,
we decided that only the consumption functions will depend on $w$ and $j$ while the dilution rates only depend on the vessel $j$. This
hypothesis is crucial because it allows us to reduce the systems of differential equations (thanks to the variable $\Sigma$) into a 
monotonous system, ultimately leading to the long-time behavior theorems. However, it was not a natural choice in the deterministic 
model because in the gradostat applications, the consumption functions do not depend on the vessels but only on the species. As 
a consequence, this hypothesis took us away from the gradostat context (and its application in the industry for example) to bring us in a 
more theoretical ecological study of the spatial heterogeneity.

Nonetheless, the approach with the functions $F_w$ and $g_w$ might lead to the obtention of the existence and stability of the 
stationary equilibria of the gradostat-like model when the dilution rates also depend on the species and can be the subject of some future
work.

\section{Mathematical proofs}\label{section:proofs}

\subsection{Computation of the invariant measures in the probabilistic case}

We show in this subsection how to compute the invariant measures announced in theorem \ref{thm:inv_prob1} and \ref{thm:explicitlambda}.

\subsubsection{Proof of the theorem \ref{thm:inv_prob1}}\label{proof:inv_prob1}


\begin{proof}
Recall that only one species is introduced in our system. The invasion rate $\Lambda_w^0$ is defined by:
\[
 \Lambda_w^0 = \int \left(f_w^{1}(R)-\delta^{1}\right) d\mu_w^0(R,1) + \int \left(f_w^{2}(R)-\delta^{2}\right) d \mu_w^0(R,2)
\]
where $\mu_w^0$ is an invariante measure of the process $(Z_t)$ restricted $M_{0,w}$. On $M_{0,w}$, $(Z_t)=(R_t,0)$ satisfies:
\[
 \dot{R_t} = \delta^{I_t}(R_0^{I_t}-R_t).
\]
Its infinitesimal generator is given for any good functions $f$ by:
\[
 Lf(r,i) = \delta^i(R_0^i-r)f'(r,i) + \lambda^1(f(r,\overline{i})-f(r,i)).
\]
It is clear that for $t$ large enough, $(R_t)$ belongs to $[R_0^1,R_0^2]$. By compacity, there exists an invariant measure for $(R_t)$ and
it is unique because the process is recurrent.

The unique invariant measure $\mu_w^0$ satisfies:
\begin{equation}\label{eq:mes_wnv}
 \forall f,\quad \int Lf(r,i) d\mu_w^0 = 0.
\end{equation}
We search $\mu_w^0$ of the shape $\mu_w^0(dR,j) = \rho^j(R) \ind_{j}dR$.
 It gives in \ref{eq:mes_wnv}:
\begin{equation}\label{eq:dirty}
\begin{aligned}
  &\int_{R_0^1}^{R_0^2} \left( \delta^1 (R_0^1 - R)f'(R) + \lambda^1(f(R,2)-f(R,1)) \right)\rho^1(R)dR + \\
  &\int_{R_0^1}^{R_0^2} \left( \delta^2 (R_0^2 - R)f'(R) + \lambda^2(f(R,1)-f(R,2)) \right)\rho^2(R)dR =0.
\end{aligned}
\end{equation}
Assume that $f(x,j) = f(x)$. It gives in \ref{eq:dirty}:
\[
\int_{R_0^1}^{R_0^2} \left( \delta^1 (R_0^1 - R)f'(R) \right)\rho^1(R)dR + 
  \int_{R_0^1}^{R_0^2} \left( \delta^2 (R_0^2 - R)f'(R) \right)\rho^2(R)dR =0.
\]
An integration by parts gives:
\[
\begin{aligned}
 \left[ \delta^1 (R_0^1 - R)f'(R)\rho^1(R) \right]_{R_0^1}^{R_0^2}&+\left[ \delta^2 (R_0^2 - R)f'(R)\rho^2(R) \right]_{R_0^1}^{R_0^2}\\
 &-\int_{R_0^1}^{R_0^2} f(x)\left( (\delta^1 (R_0^1 - R)\rho^1(R))' + (\delta^2 (R_0^2 - R)\rho^2(R))'\right)dR = 0.
\end{aligned}
\]
it seems ``natural'' that $\rho^j(R_0^{\overline{j}}) = 0$ according to the dynamics of the process $(R_t,I_t)$. Assuming this, a classic density
argument gives:
\[
 \delta^1 (R_0^1 - R)\rho^1(R) + \delta^2 (R^2 - R)\rho^2(R) = K.
\]
 From $\rho^j(R_0^{\overline{j}}) = 0$ we have $K=0$ which yields:
\begin{equation}\label{eq:eq1}
\delta^1 (R_0^1 - R)\rho^1(R) + \delta^2 (R^2 - R)\rho^2(R) = 0.
\end{equation}
Now, assume that $f(R,1)=f(R)$ and $f(R,2)=0$. Plugging this  in \ref{eq:dirty} gives
after an integration by parts:
\[
 \int_{R_0^1}^{R_0^2} f(R) \left( -(\delta^1 (R_0^1 - R)\rho^1(R))'-\lambda^1\rho^1(R)+\lambda^2 \rho^2(R) \right)dR.
\]
By the same density argument as before, we obtain
\[
 -(\delta^1 (R_0^1 - R)\rho^1(R))'-\lambda^1\rho^1(R)+\lambda^2 \rho^2(R) = 0
\]
that is
\[
 -\delta^1 (R_0^1 - R)\rho'^1(R) + \delta^1\rho^1(R) -\lambda^1\rho^1(R)+\lambda^2 \rho^2(R) = 0.
\]
Equation \ref{eq:eq1} gives:
\[
 \rho^2(R) = \frac{\delta^1 (R-R_0^1)}{\delta^2 (R^2-R)}\rho^1(R).
\]
As a consequence, $\rho^1$ satisfies the differential equation:
\begin{equation}\label{eq:eq2}
 \rho'^1(R) + \rho^1(R) \left( \frac{1}{R-R_0^1}-\frac{\lambda^1}{\delta^1(R-R_0^1)}+\frac{\lambda^2}{\delta^2(R_0^2-R)} \right) =0.
\end{equation}
Solving \ref{eq:eq2} gives the explicit expression for $\rho^1$:
\[
 \rho^1(R) = C(R-R_0^1)^{\frac{\lambda^1}{\delta^1}-1}(R_0^2-R)^{\frac{\lambda^2}{\delta^2}}.
\]
Hence,
\[
 \rho^2(R) = C\frac{\delta^1}{\delta^2}(R-R_0^1)^{\frac{\lambda^1}{\delta^1}}(R_0^2-R)^{\frac{\lambda^2}{\delta^2}-1},
\]
where $C$ is a constant. The value of $C$ is determined by the fact that $\mu$ is a probability measure:
\[
 \int_{R_0^1}^{R_0^2} \rho^1(R)dR + \int_{R_0^1}^{R_0^2} \rho^2(R)dR = 1.
\]
As a consequence:
\[
 C\int_{R_0^1}^{R_0^2} \left( (R-R_0^1)^{\frac{\lambda^1}{\delta^1}-1}(R_0^2-R)^{\frac{\lambda^2}{\delta^2}} + \frac{\delta^1}{\delta^2}(R-R_0^1)^{\frac{\lambda^1}{\delta^1}}(R_0^2-R)^{\frac{\lambda^2}{\delta^2}-1} \right)dR = 1.
\]
This explicit expression of  $\mu_w^0$ allows us to compute $\Lambda_w^0$:
\[
 \Lambda_w^0 = C\delta^2\int_{R_0^1}^{R_0^2} (f_w^1(R)-\delta^1)(R-R_0^1)^{\frac{\lambda^1}{\delta^1}-1}(R_0^2-R)^{\frac{\lambda^2}{\delta^2}}dR
 + C\delta^1 \int_{R_0^1}^{R_0^2} (f_w^2(R)-\delta^2) (R-R_0^1)^{\frac{\lambda^1}{\delta^1}}(R_0^2-R)^{\frac{\lambda^2}{\delta^2}-1}dR
\]
Set  $x = \frac{R-R_0^1}{R_0^2-R_0^1}$, $\gamma^j = \frac{\lambda^j}{\delta^j}$ and
$g_w^j(x) = f_w^j((R_0^2-R_0^1)x+R_0^1)$, we obtain
\[
 \Lambda_w^0 = C (R_0^2-R_0^1)^{\gamma^1+\gamma^2} \int_0^1 \left[\delta^2(g_w^1(x)-\delta^1)(1-x)+\delta^1(g_w^2(x)-\delta^2)x \right] x^{\gamma^1-1}(1-x)^{\gamma^2-1}dx
\]
One can recognize a part of the density of the Beta law of parameters $(\gamma^1,\gamma^2)$.
Using the same variable change for the expression of $C$ and some classical properties of the beta function (like $B(x,y) = B(y,x)$ and
$B(x,y+1) = \frac{y}{x+y}B(x,y)$), the expression of $\Lambda$ becomes:
\[
 \Lambda_w^0 = \frac{\gamma^1+\gamma^2}{\lambda^1+\lambda^2} \int_0^1 \left[\delta^2(g_w^1(x)-\delta^1)(1-x)+\delta^1(g_w^2(x)-\delta^2)x \right] \frac{x^{\gamma^1-1}(1-x)^{\gamma^2-1}}{B(\gamma^1,\gamma^2)}dx
\]
Set $\Phi(x) = \delta^2(g_w^1(x)-\delta^1)(1-x)+\delta^1(g_w^2(x)-\delta^2)x$, then:
\begin{equation}\label{eq:L}
 \Lambda_w^0 = \frac{\gamma^1+\gamma^2}{\lambda^1+\lambda^2} \dE \left[ \Phi(B) \right]
\end{equation}
where $B$ is a random variable following a Beta law of parameter $(\gamma^1,\gamma^2)$.
\end{proof}

\begin{rem}
The proof for theorem \ref{thm:explicitlambda} uses the same idea except that it requires more heavy computations. We omit it for the sake
of readability of this article.
\end{rem}

\subsubsection{Proof of the proposition \ref{prop:prop_convexity}}\label{proof:prop_convexity}

Our expression of the invasion rate is similar to the one the authors of \cite{mz} obtained for the invasion rates defined in the 
Lotka-Volterra switching system introduced in \cite{lv}. In order to study the invasion rate they use the following property:

\begin{prop}\label{prop:conv}
(Convex order between Beta laws). Assume that $X$ and $X'$ are two random variables following Beta laws of parameters $(a,b)$ and $(a',b')$. If
$a<a'$, $b<b'$ and $\frac{a}{a+b} = \frac{a'}{a'+b'}$ then for any convex function $\phi$:
\[
 \dE[\phi(X')] \leq \dE[\phi(X)].
\]
\end{prop}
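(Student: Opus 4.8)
The plan is to read Proposition~\ref{prop:conv} as a statement about the convex order: writing $X\sim\mathrm{Beta}(a,b)$ for the law with the smaller parameters, we must show $X'\leq_{cx} X$. Since the two laws have the same mean, it suffices to establish the stop-loss inequalities $\dE[(X-t)_+]\geq \dE[(X'-t)_+]$ for every $t\in[0,1]$: indeed, any convex function on $[0,1]$ is an affine function plus a nonnegative mixture of the maps $x\mapsto(x-t)_+$, and the affine part contributes nothing because $\dE[X]=\dE[X']$. Note also that the hypotheses $a<a'$, $b<b'$ together with $\frac{a}{a+b}=\frac{a'}{a'+b'}$ force $a+b<a'+b'$, so $X'$ is the ``more concentrated'' law, as it should be.

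The heart of the proof is a density-crossing (cut) argument. Let $f,f'$ be the densities of $X$ and $X'$. Their ratio is
\[
\frac{f(x)}{f'(x)}=\frac{B(a',b')}{B(a,b)}\,x^{-(a'-a)}(1-x)^{-(b'-b)},
\]
and since $a'-a>0$ and $b'-b>0$, the exponent $x\mapsto -(a'-a)\log x-(b'-b)\log(1-x)$ is strictly convex on $(0,1)$ and tends to $+\infty$ at both endpoints. Hence $f/f'$ is, up to a positive constant, strictly U-shaped and blows up at $0$ and at $1$. As $f$ and $f'$ are distinct densities with the same total mass, the level $\{f/f'=1\}$ can be attained neither zero nor exactly one time (either case would force $f\geq f'$ throughout $(0,1)$, contradicting $\int f=\int f'=1$), so it is attained exactly twice, at points $c_1<c_2$, with $f>f'$ on $(0,c_1)\cup(c_2,1)$ and $f<f'$ on $(c_1,c_2)$. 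Thus $h:=f-f'$ has the sign sequence $+,-,+$.

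From here the argument is routine bookkeeping. Put $H:=F_X-F_{X'}$, so $H(0)=H(1)=0$, $H'=h$, and $H$ is increasing on $(0,c_1)$, decreasing on $(c_1,c_2)$, increasing on $(c_2,1)$. The boundary values give $H\geq 0$ on $(0,c_1)$ and $H\leq 0$ on $(c_2,1)$, while on $(c_1,c_2)$ the function $H$ decreases from a nonnegative to a nonpositive value and hence changes sign exactly once, at some $c^*$; so $H\geq 0$ on $[0,c^*]$ and $H\leq 0$ on $[c^*,1]$. An integration by parts gives $\dE[(X-t)_+]=\int_t^1(1-F_X(s))\,ds$, whence
\[
\dE[(X-t)_+]-\dE[(X'-t)_+]=\int_t^1\big(F_{X'}(s)-F_X(s)\big)\,ds=-\int_t^1 H(s)\,ds=\int_0^t H(s)\,ds,
\]
using $\int_0^1 H=\dE[X']-\dE[X]=0$. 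This last expression is $\geq 0$ for every $t$, since it grows on $[0,c^*]$ from the value $0$ and then decreases back to $0$. This is exactly the family of stop-loss inequalities we needed, and combined with equality of the means it yields $\dE[\phi(X)]\geq\dE[\phi(X')]$ for every convex $\phi$.

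The only delicate point is the sign bookkeeping: one must be certain the ratio $f/f'$ meets the level $1$ exactly twice (which is precisely where equality of the total masses enters) and that the three monotonicity intervals of $H$ combine with $H(0)=H(1)=0$ to produce a single sign change of $H$. A fully probabilistic alternative is available --- realize $\mathrm{Beta}(a,b)$ as $G_1/(G_1+G_2)$ with independent $G_1\sim\Gamma(a)$, $G_2\sim\Gamma(b)$, split $\Gamma(a')=\Gamma(a)\oplus\Gamma(a'-a)$ and $\Gamma(b')=\Gamma(b)\oplus\Gamma(b'-b)$, and build a martingale coupling of $X'$ towards $X$ --- but verifying the conditional-expectation identity along that route is messier than the cut criterion above, so I would keep to the density-crossing proof.
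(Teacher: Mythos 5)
Your argument is correct and complete. Note that the paper does not actually prove Proposition~\ref{prop:conv}: it is quoted as a known property taken from \cite{mz}, so there is no in-paper proof to compare against. What you have written is a self-contained proof via the classical Karlin--Novikoff cut criterion (equal means plus a single crossing of the distribution functions implies convex order), and every step checks out: the hypotheses force $a+b<a'+b'$ and equal means; the likelihood ratio $f/f'$ is the exponential of a strictly convex function blowing up at both endpoints, so strict convexity caps the number of solutions of $f/f'=1$ at two, and equality of total masses rules out zero or one solution, giving the sign pattern $+,-,+$ for $f-f'$; the bookkeeping on $H=F_X-F_{X'}$ then yields a single sign change and hence nonnegativity of all stop-loss differences $\int_0^t H$. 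The only point worth flagging is the final reduction: the representation of a convex $\phi$ on $[0,1]$ as an affine function plus a nonnegative mixture of the hinges $x\mapsto (x-t)_+$ is standard (it is exactly Ohlin's lemma / the usual characterization of the convex order by stop-loss transforms for laws with equal means), but strictly speaking it should be invoked for $\phi$ finite and convex on a neighbourhood of $[0,1]$ or handled with the right derivative at $0$; this is harmless here since the paper only ever applies the proposition to a smooth function $\Phi$. Your remark that a martingale-coupling proof via the Gamma representation of the Beta law is also possible is accurate, and your choice to avoid it is reasonable.
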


We will use this proposition in order to prove the following proposition:

\begin{prop}
The invasion rate $\Lambda_w^0$ is monotone acording to the variable $\lambda$.
\end{prop}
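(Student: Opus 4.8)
The plan is to use the Beta-law representation \eqref{eq:L} together with the convex-order comparison in Proposition \ref{prop:conv}. Recall that $\Lambda_w^0 = \frac{\gamma^1+\gamma^2}{\lambda^1+\lambda^2}\,\dE[\Phi(B)]$ where $B$ follows a Beta law of parameters $(\gamma^1,\gamma^2)$ with $\gamma^j = \lambda^j/\delta^j$. Writing $\lambda^1 = s\lambda$ and $\lambda^2 = (1-s)\lambda$, we have $\gamma^1 = \frac{s}{\delta^1}\lambda$ and $\gamma^2 = \frac{1-s}{\delta^2}\lambda$, so the prefactor simplifies: $\frac{\gamma^1+\gamma^2}{\lambda^1+\lambda^2} = \frac{s/\delta^1 + (1-s)/\delta^2}{s + 1-s} = \frac{s}{\delta^1} + \frac{1-s}{\delta^2}$, which is \emph{independent} of $\lambda$. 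This is the first observation: the monotonicity of $\Lambda_w^0$ in $\lambda$ is equivalent to the monotonicity of $\lambda \mapsto \dE[\Phi(B_\lambda)]$, where $B_\lambda$ has parameters $(\frac{s}{\delta^1}\lambda, \frac{1-s}{\delta^2}\lambda)$.

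The second step is to recognize that the family $\{B_\lambda\}_{\lambda>0}$ is exactly the kind of family Proposition \ref{prop:conv} addresses: as $\lambda$ increases, both parameters $a = \frac{s}{\delta^1}\lambda$ and $b = \frac{1-s}{\delta^2}\lambda$ increase, while the ratio $\frac{a}{a+b} = \frac{s/\delta^1}{s/\delta^1 + (1-s)/\delta^2}$ stays fixed. Hence for $\lambda < \lambda'$, Proposition \ref{prop:conv} gives $\dE[\phi(B_{\lambda'})] \le \dE[\phi(B_\lambda)]$ for every convex $\phi$, and the reverse inequality for every concave $\phi$. So if $\Phi$ were convex on $[0,1]$, then $\lambda\mapsto\dE[\Phi(B_\lambda)]$ would be non-increasing; if concave, non-decreasing; in either case monotone. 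The remaining task is therefore to analyze the convexity of $\Phi$.

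The main obstacle is that $\Phi$ is \emph{not} convex (nor concave) in general: $\Phi(x) = \delta^2(g_w^1(x)-\delta^1)(1-x) + \delta^1(g_w^2(x)-\delta^2)x$, where $g_w^j(x) = f_w^j((R_0^2-R_0^1)x+R_0^1)$ is a Monod function composed with an affine map, hence a fractional-linear function of $x$ whose second derivative does not have a fixed sign after multiplication by the linear factors $(1-x)$ and $x$. The plan to get around this is to decompose $\Phi$ additively: $\Phi = \Phi_1 + \Phi_2$ where $\Phi_1(x) = \delta^2(g_w^1(x)-\delta^1)(1-x)$ and $\Phi_2(x) = \delta^1(g_w^2(x)-\delta^2)x$, and then to observe that each $\Phi_i$, being (a constant times) a Monod-type rational function multiplied by a linear term, is of the form $\frac{\alpha x + \beta}{\gamma x + \delta} + (\text{linear})$ — that is, after polynomial division it equals a linear function plus a term of the shape $\frac{c}{\gamma x + \delta}$. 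A function $x \mapsto \frac{c}{\gamma x + \delta}$ is convex or concave on any interval not containing the pole, with the sign of the second derivative determined by $\operatorname{sign}(c)$. Thus each $\Phi_i$ is either convex or concave on $[0,1]$, but possibly with \emph{different} signs, so $\Phi$ itself need not be monotone-friendly — and one may need to split the convex and concave parts, apply the convex-order inequality to each separately with the appropriate direction, and check that the two contributions still add up to a monotone function, or alternatively argue that the non-convex terms contribute a net monotone effect via a direct differentiation in $\lambda$.

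Concretely, I would first carry out the partial-fraction decomposition of $\Phi$ to write $\Phi(x) = \ell(x) + \frac{A_1}{x - p_1} + \frac{A_2}{x - p_2}$ with $\ell$ affine, $p_1 = \frac{b_w^1+R_0^1}{R_0^1-R_0^2} < 0$ and $p_2 = \frac{b_w^2+R_0^1}{R_0^1-R_0^2} < 1$ the (real, outside $(0,1)$) poles, and explicit constants $A_1, A_2$ whose signs I would compute in terms of $a_w^j, b_w^j, \delta^j, R_0^j$. Since $\dE[B_\lambda]$ and $\dE[\text{affine}(B_\lambda)]$ are both constant in $\lambda$ (the mean is fixed), the affine part contributes nothing to the $\lambda$-dependence, and we reduce to studying $\lambda \mapsto \dE\!\left[\frac{A_i}{B_\lambda - p_i}\right]$. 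Each such term is $A_i$ times a convex or concave function of $x$ on $[0,1]$ (convex iff $x > p_i$ on the whole interval, i.e.\ $p_i < 0$; concave iff $p_i > 1$), so Proposition \ref{prop:conv} makes each term monotone in $\lambda$ with a sign I can read off. The conclusion I expect is that all surviving terms happen to have the \emph{same} monotonicity direction — which would follow from a short sign computation showing $A_1 > 0$ and $A_2 < 0$ together with $p_1 < 0$ and $p_2 < 1$ conspiring to make both terms, say, non-increasing — giving the claimed monotonicity of $\Lambda_w^0$. If the signs do not automatically align, the fallback is to compute $\frac{d}{d\lambda}\dE[\Phi(B_\lambda)]$ directly by differentiating under the integral, using the explicit Beta density and the identity $\partial_a$ (log-density) $= \log x - (\psi(a) - \psi(a+b))$, and show the resulting integral has a fixed sign; but the convex-order route is cleaner and is the one I would present, flagging the sign-of-$A_i$ computation as the one genuinely fiddly step.
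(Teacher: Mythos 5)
Your first two steps coincide with the paper's: the prefactor $\frac{\gamma^1+\gamma^2}{\lambda^1+\lambda^2}=\frac{s}{\delta^1}+\frac{1-s}{\delta^2}$ is indeed independent of $\lambda$, and the family $B_\lambda\sim\mathrm{Beta}\left(\frac{s}{\delta^1}\lambda,\frac{1-s}{\delta^2}\lambda\right)$ is exactly the fixed-mean family to which Proposition \ref{prop:conv} applies, so everything reduces to deciding whether $\Phi$ is convex or concave on $[0,1]$. The gap is in how you decide it. Your partial-fraction route does not close: writing $c=R_0^2-R_0^1>0$ and $p_j=-\frac{b_w^j+R_0^1}{c}$, both poles are strictly negative under the standing hypothesis $R_0^1<R_0^2$, and a direct computation gives $A_1=-\frac{\delta^2 a_w^1 b_w^1(1-p_1)}{c}<0$ while $A_2=-\frac{\delta^1 a_w^2 b_w^2\,p_2}{c}>0$. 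Thus the two pole terms are one concave and one convex on $[0,1]$: Proposition \ref{prop:conv} makes $\lambda\mapsto\dE\left[A_1/(B_\lambda-p_1)\right]$ non-decreasing and $\lambda\mapsto\dE\left[A_2/(B_\lambda-p_2)\right]$ non-increasing, and a sum of two monotone functions pulling in opposite directions need not be monotone. The sign alignment you hoped for ($A_1>0$, $A_2<0$) is the opposite of what actually happens, and your fallback (differentiating the Beta density in $\lambda$ via digamma identities) is not carried out and is not obviously tractable.

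The missing idea --- and what the paper actually proves --- is that $\Phi$ \emph{is} globally convex or concave on $[0,1]$ despite this mixed decomposition. One has $\Phi''(x)=\frac{2A_1}{(x-p_1)^3}+\frac{2A_2}{(x-p_2)^3}$, and $\Phi''(x)=0$ is equivalent to $\left(\frac{x-p_2}{x-p_1}\right)^3=-\frac{A_2}{A_1}>0$; taking the real cube root reduces this to a single affine equation in $x$, so the cubic $A_1(x-p_2)^3+A_2(x-p_1)^3$ has exactly one real root. This is precisely the paper's polynomial $P$, which has the special form $L^2(t+\beta^1)^3-L^1(t+\beta^2)^3$. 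The paper then checks that this unique root is negative, hence outside the relevant interval, so $\Phi''$ has constant sign there and the convex-order comparison applies to $\Phi$ itself --- in one direction or the other, which suffices since the proposition only asserts monotonicity, not its direction. If you want to salvage your decomposition, this unique-real-root-outside-$[0,1]$ computation is the one step you must add; without it the argument is incomplete.
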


\begin{proof}
We proved that:
\[
  \Lambda_w^0 = \frac{\gamma^1+\gamma^2}{\lambda^1+\lambda^2} \dE \left[ \Phi(B) \right].
\]
Recall that $\gamma^1(s,\lambda) = \frac{s\lambda}{\delta^1}$ and $\gamma^2(s,\lambda) = \frac{(1-s)\lambda}{\delta^2}$. Proposition \ref{prop:conv} 
ensures that if $B$ and $B'$ are random variables following Beta law of parameters $(\gamma^1(s,\lambda),\gamma^2(s,\lambda))$ and
$(\gamma^1(s,\lambda'),\gamma^2(s,\lambda'))$ with $\lambda<\lambda'$ then for any convex function $\phi$:
\[
 \dE[\phi(B')] \leq \dE[\phi(B)].
\]
As a consequence, establishing the convexity (or concavity) of the function $\Phi$ can give the monotonicity of $\Lambda$ according to
the global switching rate $\lambda$.

Recall that:
\[
\Phi(x) = \delta^2(1-x)\left(f_w^1\left((R_0^2-R_0^1)x+R_0^1\right)-\delta^1\right)+\delta^1 x\left(f_w^2\left((R_0^2-R_0^1)x+R_0^1\right)-\delta^2\right).
\]
Here the convexity (or concavity) of $\Phi$ is not clear and will be checked by straight computation.
Set $\alpha^j = \frac{a_w^j}{\delta^j}$, $\beta^j = \frac{b_w^j}{R_0^2-R_0^1}$ and $r = \frac{R_0^1}{R_0^2-R_0^1}$. It comes:
\[
\Phi(x) = \delta^1 \delta^2 \left( (1-x)\left( \frac{\alpha^1(x+r)}{\beta^1+x+r}-1 \right)+ x\left( \frac{\alpha^2(x+r)}{\beta^2+x+r}-1 \right) \right).
\]
Set $t=x+r$ ($t\in[r,1+r]$). It comes:
\[
 g(t) = \frac{\Phi(t)}{\delta^1 \delta^2} = (1+r-t)\left( \frac{\alpha^1 t}{\beta^1 + t}-1 \right) + (t-r)\left( \frac{\alpha^2 t}{\beta^2 + t}-1 \right).
\]
A straight forward computation gives the derivatives of $g$:
\[
g'(t) = (1+r-t)\frac{\alpha^1\beta^1}{(t+\beta^1)^2}-\frac{\alpha^1 t}{\beta^1 + t} +(t-r)\frac{\alpha^2\beta^2}{(t+\beta^2)^2} +\frac{\alpha^2 t}{\beta^2 + t}
\]
and
\[
 \frac{g''(t)}{2} = \frac{-\alpha^1\beta^1(1+r+\beta^1)(t+\beta^1)^3+\alpha^2\beta^2(r+\beta^2)(t+\beta^1)^3}{(t+\beta^1)^3(t+\beta^2)^3}.
\]
Set $L^1 = \alpha^1\beta^1(1+r+\beta^1)$ and $L^2 = \alpha^2\beta^2$. It comes:
\begin{align*}
 h(t) &= \frac{g''(t)}{2}(t+\beta^1)^3(t+\beta^2)^3\\
 &= (L^2-L^1)t^3 + 3\left(\beta^1 L^2-\beta^2 L^1\right) t^2 +3\left((\beta^1)^2 L^2-(\beta^2)^2 L^1\right)t + (\beta^1)^3 L^2-(\beta^2)^3 L^1.
\end{align*}
Set $L = \frac{L^2}{L^1}$ and $\beta = \frac{\beta^1}{\beta^2}$, it comes:
\[
 h(t) = (\beta^2)^3\left( (L-1)\left(\frac{t}{\beta}\right)^3 + 3(L\beta-1)\left(\frac{t}{\beta}\right)^2 + 3(L(\beta)^2-1)\left(\frac{t}{\beta}\right) +L(\beta)^3-1 \right).
\]
The study of the polynomial $P = (L-1)X^3 + 3(L\beta-1)X^2 + 3(L(\beta)^2-1)X +L(\beta)^3-1$ will give the sign of the second derivative of $\Phi$.

\begin{lem}
$P$ has a unique root on $\dR$ and its expression is:
\[
X_0 = \left\lvert \frac{\beta-1}{L-1} \right\rvert \left( -L^{\frac{1}{3}}-L^{\frac{2}{3}} \right) - \frac{L\beta-1}{L-1}.
\]
Moreover, $X_0<0$.
\end{lem}
\begin{proof}
This result is proven by a computation of the roots of the polynomial $P$. It comes that $P$ has a unique root and it is negative.
%
%
%
%
%
\end{proof}

It comes from this previous lemma that the second derivative of $\Phi$ has a constant sign on $[0,1]$ implying that $\Phi$ is either
convex or concave on $[0,1]$. So $\Lambda_w^0$ is monotonous according to \ref{prop:conv}.
\end{proof}

\subsection{Proof of the results for the deterministic model}

\subsubsection{A graphical caracterisation of the equilibria and their stability}%
\label{subsection:graphical}

In this section, we construct a graphical approach in the plan $(R^1,R^2)$ which contains all the information about the non negative 
stationary solution and their stability. This approach is based on the construction of four functions $F_w$ and $g_w$, $w\in\{u,v\}$ 
described below.

For the sake of simplicity we set 
\begin{equation}\label{eq:Xwj}
 X_w^j(R^j) = f_w^j(R^j)-\delta^j.
\end{equation}

Any non-negative stationary equilibrium $(U,V)$ of the differential equation \eqref{eq:chem_deter} are  solution of the 
system \eqref{eq:station1}:
\begin{equation}\label{eq:station2}
\left\{
\begin{aligned}
 &A_u(R)U = 0\\
 &A_v(R) V = 0
\end{aligned}
\right.
\end{equation}
where, according to remark \ref{rem:chgt_var}, we have $R=R_0-U-V\in[0,R_0]$ and the matrices $A_w(R)$ are defined by
\[
 A_w(R) = \begin{pmatrix}
        X_w^1(R)-\lambda^1 & \lambda^1\\
        \lambda^2 & X_w^2(R)-\lambda^2
       \end{pmatrix}.
\]



Recall that for any $w\in\{u,v\}$, we note $W\in\{U,V\}$ the concentration of the species $w$. If $W\neq\begin{pmatrix}
                                                                                                         0\\
                                                                                                         0
                                                                                                        \end{pmatrix}$ in \eqref{eq:station2}, 
it implies that $\det\left(A_w(R)\right)=0$ which reads explicitly:
\begin{equation}\label{definitionFintext}
 \left(X_w^1(R^1)-\lambda^1\right)\left(X_w^2(R^2) -\lambda^2\right)=\lambda^1\lambda^2.
\end{equation}
It follows that the set of points $(R^1,R^2)$ for which the species $w$ may survive is a one dimensional curve. It appears 
that this curve is the graph of a decreasing function $F_w$ defined on a domain $D_w$: 
\[
 (R^1,R^2)\text{ verifies \eqref{definitionFintext}}\; \Leftrightarrow\;R^1\in D_w \text{ and } R^2=F_w(R^1).
\]
Moreover, these functions $F_w$ may be explicitly computed as it is stated in the  proposition \ref{prop:Fw}.
\begin{prop}\label{prop:Fw}
Let $w\in\{u,v\}$ and $g:\;x\mapsto g(x)=\lambda^2+\frac{\lambda^1\lambda^2}{x-\lambda^1}.$
Define: 
\[
 D_w=\{r\in[0,R_0],\; X_w^1(r)-\lambda^1<0\}\text{ and }
 F_w= \left(X_w^2\right)^{-1} \circ g \circ X_w^1. 
\]

Now, suppose that there  exists a non-negative solution $(U,V)$ of \eqref{eq:station2} such that $W\in\{U,V\}$ is non zero. Then
\[
 R^1\in D_w \text{ and } R^2=F_w(R^1)
\]
\end{prop}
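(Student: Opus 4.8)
The plan is to extract everything from the single vector identity $A_w(R)W=0$ satisfied by the non-zero block $W\in\{U,V\}$ of the stationary solution, exploiting the $2\times2$ structure of $A_w(R)$. First I would record that, by Remark~\ref{rem:chgt_var}, the resource vector at such an equilibrium is $R=R_0-U-V$ with $R^1,R^2\in[0,R_0]$, and that $A_w(R)W=0$ by \eqref{eq:station2}. Then I would show that $W$ has strictly positive components: writing out the two scalar equations of $A_w(R)W=0$, if $W^1=0$ the first one forces $\lambda^1W^2=0$, hence $W^2=0$ since $\lambda^1>0$, contradicting $W\neq0$; symmetrically $W^2=0$ forces $W^1=0$. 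As $W\ge0$ and $W\neq0$, this gives $W^1>0$ and $W^2>0$. (This elementary step replaces the Perron--Frobenius remark that a non-negative kernel vector of the quasi-positive matrix $A_w(R)$, whose off-diagonal entries $\lambda^1,\lambda^2$ are positive, is automatically strictly positive.)

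Next I would deduce $R^1\in D_w$: the first scalar equation reads $(X_w^1(R^1)-\lambda^1)W^1=-\lambda^1W^2<0$, and dividing by $W^1>0$ gives $X_w^1(R^1)-\lambda^1<0$; together with $R^1\in[0,R_0]$ this is exactly the definition of $D_w$. Finally, since $W\neq0$ lies in the kernel of $A_w(R)$ we have $\det A_w(R)=0$, which is precisely \eqref{definitionFintext}; as $X_w^1(R^1)-\lambda^1\neq0$ I can solve for the other factor and obtain $X_w^2(R^2)=\lambda^2+\frac{\lambda^1\lambda^2}{X_w^1(R^1)-\lambda^1}=g(X_w^1(R^1))$. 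Because $f_w^2$ is a Monod function, $X_w^2=f_w^2-\delta^2$ is strictly increasing on $[0,R_0]$, hence a bijection onto its image, so applying $(X_w^2)^{-1}$ yields $R^2=(X_w^2)^{-1}\circ g\circ X_w^1(R^1)=F_w(R^1)$.

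I do not expect a genuine obstacle here; the only points needing a little care are the strict positivity of $W$ (used both to divide by $W^1$ and to fix the sign of $X_w^1(R^1)-\lambda^1$) and the fact that $g(X_w^1(R^1))$ indeed lies in the range of $X_w^2$ --- but the latter is automatic, since $R^2$ is a bona fide solution and hence $X_w^2(R^2)$ equals that value by construction. The complementary claims that $F_w$ is decreasing and that $D_w$ is an interval then follow from monotonicity of $g$ and of the $X_w^j$, although they are not needed for the statement as written.
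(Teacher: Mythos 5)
Your proposal is correct and follows essentially the same route as the paper's proof: write out the two scalar equations of $A_w(R)W=0$, use positivity of $W$ to get $X_w^1(R^1)-\lambda^1<0$, then pass to $\det A_w(R)=0$ and invert $X_w^2$. The only difference is that you justify the strict positivity $W^1>0$, $W^2>0$ by an explicit contradiction argument where the paper simply asserts it (citing Perron--Frobenius in a footnote); this is a harmless and slightly more self-contained variant.
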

\begin{rem}\label{rem:F}
The functions $X_w^j$ being increasing and the function $g$ being decreasing, the identity $X_w^2\circ F_w=g\circ X_w^1$ implies that the functions $F_w$ are strictly decreasing on their definition set. Moreover it exists $(m_w^1,m_w^2,m_w^3,m_w^4)\in\dR^4$ such that:
\[
 F_w(x) = \frac{m_w^1x + m_w^2}{m_w^3x+m_w^4}.
\]
The explicit formula of these parameters is useful in order to obtain numerical examples but it is not needed in the theoretical purpose, 
hence, we then omit it.
\end{rem}
\begin{proof}
First, assume that there exists a non-negative stationary equilibrium $(U,V)$. The resource concentration is given by $R = R_0-U-V$. Then,
for $W\in\{U,V\}$ non zero we have:
\begin{equation}\label{Usurvie}
A_w(R)W = 0.
\end{equation}
With this notation, \eqref{Usurvie} reads
\begin{equation}\label{explicitUsurvie}
\left\{
\begin{aligned}
 &\left(X_w^1(R^1)-\lambda^1\right)W^1 + \lambda^1 W^2 = 0\\
 &\lambda^2 W^1 + \left(X_w^2(R^2)-\lambda^2\right)W^2 = 0.
\end{aligned}
\right.
\end{equation}
Since $W^1\geq 0$ and $W^2\geq 0$, we obtain
$W^1>0$ and $W^2>0$ which yields:
\[
 \left(X_w^1(R_u^1)-\lambda^1\right)<0.
\]
Moreover, \eqref{explicitUsurvie} implies that $0$ is an eigenvalue of $A_w(R)$ implying that $\det\left( A_w(R) \right)=0$ which reads 
explicitly:

\begin{equation}\label{definitionF}
 \left(X_w^1(R^1)-\lambda^1\right)\left(X_w^2(R^2) -\lambda^2\right)=\lambda^1\lambda^2
\end{equation}
Finally, we define 
\[
 D_w=\{r>0,\; X_w^1(r)-\lambda^1<0\}
\]
and the function $F_w$ such that:
\[
 \left(X_w^1(R^1)-\lambda^1\right)\left(X_w^2(F_w(R^1)) -\lambda^2\right)=\lambda^1\lambda^2
\]
The function $X_w^2$ being injective, the function $F_w$ reads shortly :
\[
F_w= \left(X_w^2\right)^{-1} \circ g \circ X_w^1
\]
wherein we have set the function g as:
\[
 g(x)=\lambda^2+\frac{\lambda^1\lambda^2}{x-\lambda^1}.
\]
\end{proof}


At this step, we see that it is necessary that $R=(R^1,R^2)$ belongs to the graph $\mathcal{C}_w=\{(r,F_w(r)),\;r\in D_w\}$ for the 
species $w\in\{u,v\}$ to survive. But this is not a sufficient condition. Indeed, the definition of the functions $F_w$ 
correspond to the fact that $0$ is an {\it eigenvalue}\footnote{Indeed, on $D_w$ the eigenvalue $0$ is the principal eigenvalue of 
$A_w(R)$, and by the Perron-Frobenius theorem, it is associated to a positive eigenvector which is nothing but $U$.} of the matrix 
$A_w(R)$. 

The analysis of the corresponding eigenvector will give us sufficient conditions for a point of the curve to be a semi-trivial equilibrium 
(proposition \ref{prop:survival}) or a coexistence equilibrium (proposition \ref{prop:loca}). \\

For instance, assume that $(U,V)$ is a non-negative equilibrium of \eqref{eq:station2}. If $W\in\{U,V\}$) is non zero, then 
$R=(R^1,R^2)\in\mathcal{C}_w$ and $W$ is a positive eigenvectors of the matrix $A_w(R)$ for the eigenvalue $0$. It follows that there 
exists some scalar $\mu_w>0$ such that:
\begin{equation}\label{eq:eigenvector}
W= \mu_w \begin{pmatrix}
          \lambda^1\\
          -(X_w^1(R^1)-\lambda^1)
         \end{pmatrix}.
\end{equation}

\noindent In the case of the semi-trivial solution,  we have  $\begin{pmatrix}
                                                                R^1\\
                                                                R^2
                                                               \end{pmatrix}=R=R_0-W$ and it comes that:
\[
 R^2=R_0+\frac{1}{\lambda^1}(R_0-R^1)\left(X_w^1(R^1)-\lambda^1\right).
\]
This leadd us to define, for $w\in\{u,v\}$, the functions $g_w$ (defined on $D_w$) by:
\[ 
 g_w(r)=R_0+\frac{1}{\lambda^1}(R_0-r)\left(X_u^1(r)-\lambda^1\right).
\]

\begin{lem}\label{lem:gw} Let $w\in\{u,v\}$.
The function $g_w$ is increasing on the set $D_w$.
Moreover, if the semi-trivial stationary equilibrium $E_w$ exists  then the resource concentration  $R_w=\begin{pmatrix}
                                                                                                          R_w^1\\
                                                                                                          R_w^2
                                                                                                         \end{pmatrix}$ associated 
to $E_w$ satisfies $g_w(R_w^1) = R_w^2$.
\end{lem}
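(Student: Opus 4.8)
The claim has two parts: that $g_w$ is increasing on $D_w$, and that the resource concentration at the semi-trivial equilibrium $E_w$ lies on the graph of $g_w$. I would treat them separately.

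For the monotonicity, recall $g_w(r)=R_0+\frac{1}{\lambda^1}(R_0-r)\bigl(X_w^1(r)-\lambda^1\bigr)$, where $X_w^1(r)=f_w^1(r)-\delta^1$ is increasing (Monod function minus a constant) and, crucially, $r\in D_w$ means $X_w^1(r)-\lambda^1<0$. Differentiating, $g_w'(r)=\frac{1}{\lambda^1}\bigl[-(X_w^1(r)-\lambda^1)+(R_0-r)(X_w^1)'(r)\bigr]$. On $D_w$ the first bracketed term is positive because $X_w^1(r)-\lambda^1<0$, and the second is nonnegative because $R_0-r\geq 0$ (as $r\in[0,R_0]$) and $(X_w^1)'=(f_w^1)'>0$. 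Hence $g_w'(r)>0$ on $D_w$, so $g_w$ is strictly increasing. (One should note $r<R_0$ strictly on the relevant part of $D_w$, or simply that even at $r=R_0$ the derivative is still positive from the first term; either way the conclusion holds.)

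For the second part, suppose $E_w=(W,0)$ (or $(0,W)$) is the semi-trivial equilibrium, with associated resource $R_w=R_0-W\in[0,R_0]^2$ componentwise. By Proposition~\ref{prop:Fw}, $R_w^1\in D_w$, and since $W\neq 0$ the vector $W$ is a positive eigenvector of $A_w(R_w)$ for the eigenvalue $0$; this is exactly equation~\eqref{eq:eigenvector}, so there is $\mu_w>0$ with $W^1=\mu_w\lambda^1$ and $W^2=-\mu_w\bigl(X_w^1(R_w^1)-\lambda^1\bigr)$. Now I would eliminate $\mu_w$: from the first component, $\mu_w=W^1/\lambda^1=(R_0-R_w^1)/\lambda^1$ (using $W=R_0-R_w$ for the semi-trivial solution). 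Substituting into the second component gives $W^2=R_0-R_w^2=-\frac{1}{\lambda^1}(R_0-R_w^1)\bigl(X_w^1(R_w^1)-\lambda^1\bigr)$, i.e. $R_w^2=R_0+\frac{1}{\lambda^1}(R_0-R_w^1)\bigl(X_w^1(R_w^1)-\lambda^1\bigr)=g_w(R_w^1)$, which is the asserted identity.

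The computations here are entirely routine; the only point requiring a little care — and what I would regard as the crux — is bookkeeping the sign conditions that make the monotonicity argument work, namely that on $D_w$ one genuinely has $X_w^1(r)-\lambda^1<0$ and that $R_0-r\geq 0$ throughout, so that both contributions to $g_w'$ have the right sign. One should also make sure the definition of $D_w$ used here matches the one from Proposition~\ref{prop:Fw} (the proof of that proposition writes $D_w=\{r>0,\ X_w^1(r)-\lambda^1<0\}$ while the statement restricts to $[0,R_0]$; I would just work on $D_w\cap[0,R_0]$, which is where all equilibria live by Remark~\ref{rem:chgt_var}). Beyond that, the eigenvector normalization \eqref{eq:eigenvector} does all the work for the second assertion, so no further input is needed.
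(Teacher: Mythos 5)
Your proof is correct and follows essentially the same route as the paper: the monotonicity is obtained from the same explicit computation of $g_w'$ together with the sign conditions $X_w^1(r)-\lambda^1<0$ on $D_w$ and $R_0-r\geq 0$, and the identity $g_w(R_w^1)=R_w^2$ is exactly the eigenvector computation (equation \eqref{eq:eigenvector} with $W=R_0-R_w$) that the paper uses to motivate the definition of $g_w$, which you merely spell out in more detail than the paper's one-line "follows from the very definition".
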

\begin{proof}The fact that $g_w(R_w^1) = R_w^2$ follows from the very definition of $g_w$.
A direct computation gives
\[
 g_w'(r) = -\frac{X_w^1(r)-\lambda^1}{\lambda^1} + (R_0-r)\frac{X_w^{1\prime}(r)}{\lambda^1}.
\]
Since $X_w^1(r)-\lambda^1<0$ for $r\in D_w$, it comes that $g_w$ is  increasing on $D_w$.
\end{proof}

We can now state the graphical characterization of the semi-trivial solution.
\begin{prop}\label{prop:survival}
Let $w\in\{u,v\}$. The semi-trivial solution $E_w$ exists if and only if there exists $R_w^1\in D_w$ such that $F_w(R_w^1)=g_w(R_w^1):=R_w^2$. In that case $E_w$ is unique and  the resource concentration at $E_w$ is $R_w=(R_w^1,R_w^2)$.
\end{prop}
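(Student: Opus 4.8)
The plan is to characterise the semi-trivial equilibrium $E_w=(W,0)$ (with $\overline{W}=0$) by the system it must solve and then translate this into the simultaneous intersection condition $F_w(R_w^1)=g_w(R_w^1)$. Recall from Theorem \ref{thm:semitrivial} that $E_w$ exists exactly when $E_0$ is linearly unstable for the one-species system \eqref{eq:chem_deter_1}, i.e.\ when the matrix $A_w(\Sigma)$ — equivalently, in the notation of this section, $A_w(R_0)$ — has a positive eigenvalue; and when it exists, $E_w$ is unique and lies in $\dR_+^*\times\dR_+^*$. So the statement to prove is really that the existence of a positive eigenvalue of $A_w(R_0)$ is equivalent to the existence of a point $R_w^1\in D_w$ with $F_w(R_w^1)=g_w(R_w^1)$, and that this point is then the resource concentration at $E_w$.

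First I would establish the forward implication. Assume $E_w$ exists; write $W=(W^1,W^2)$ with $W^j>0$ and $R_w=R_0-W\in[0,R_0]^2$. Since $A_w(R_w)W=0$ with $W>0$, Perron–Frobenius (as in the footnote) forces $0$ to be the principal eigenvalue of $A_w(R_w)$, hence $\det A_w(R_w)=0$ and $X_w^1(R_w^1)-\lambda^1<0$; this is exactly Proposition \ref{prop:Fw}, giving $R_w^1\in D_w$ and $R_w^2=F_w(R_w^1)$. Next, the eigenvector relation \eqref{eq:eigenvector} together with $R_w=R_0-W$ (component-wise) yields, from the first coordinate $R_w^1 = R_0 - \mu_w\lambda^1$ and the second $R_w^2 = R_0 + \mu_w(X_w^1(R_w^1)-\lambda^1)$; eliminating $\mu_w = (R_0-R_w^1)/\lambda^1$ gives precisely $R_w^2 = g_w(R_w^1)$. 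Hence $F_w(R_w^1)=g_w(R_w^1)$.

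For the converse, suppose $r:=R_w^1\in D_w$ satisfies $F_w(r)=g_w(r)=:R_w^2$. Set $\mu_w=(R_0-r)/\lambda^1$ and define $W$ by \eqref{eq:eigenvector}; because $r\in D_w$ we have $X_w^1(r)-\lambda^1<0$ and (one should check $r<R_0$, which follows since $g_w(R_0)=R_0$ while $g_w$ is increasing by Lemma \ref{lem:gw} and $F_w$ is decreasing, so any intersection point has $r<R_0$) thus $\mu_w>0$ and $W>0$. The relation $\det A_w(R_w)=0$ from $R_w^2=F_w(r)$ makes $W$ (the Perron eigenvector) satisfy $A_w(R_w)W=0$, and $R_w^2=g_w(r)$ is exactly the algebraic identity that makes $R_0-W=(r,R_w^2)=R_w$, i.e.\ $R=R_0-U-V$ holds with $U=W$, $V=0$. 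Therefore $(W,0)$ solves \eqref{eq:station2}, so it is a non-negative stationary equilibrium with $\overline{W}=0$ and $W>0$: this is the semi-trivial equilibrium $E_w$, and its resource concentration is $R_w=(r,F_w(r))$. Uniqueness of $E_w$ (hence of the intersection point, given the monotonicity of $F_w$ and $g_w$) is inherited from Theorem \ref{thm:semitrivial}.

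The main obstacle, and the step I would be most careful about, is the bookkeeping in the converse: one must verify that the $W$ built from the intersection point is genuinely positive (which is where $r<R_0$ and $r\in D_w$ both get used) and that the two conditions $R_w^2=F_w(r)$ and $R_w^2=g_w(r)$ are \emph{together} equivalent to "$W$ is a Perron eigenvector of $A_w(R_0-W)$ for eigenvalue $0$'' — the first condition encodes that $0$ is an eigenvalue at the correct resource level, the second encodes that the associated eigenvector closes up consistently with $R=R_0-W$. Everything else is either a direct appeal to Propositions \ref{prop:Fw}, Lemma \ref{lem:gw}, and Theorem \ref{thm:semitrivial}, or the routine elimination of the scalar $\mu_w$.
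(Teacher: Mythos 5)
Your proof is correct and follows essentially the same route as the paper: the forward direction is exactly the combination of Proposition \ref{prop:Fw} and Lemma \ref{lem:gw}, and your explicit reconstruction of $W$ from the eigenvector formula \eqref{eq:eigenvector} in the converse merely fills in a step the paper dismisses as a ``direct consequence''. The only cosmetic difference is that the paper derives uniqueness from the fact that $r\mapsto g_w(r)-F_w(r)$ is increasing on $D_w$ (which you note parenthetically), rather than by appealing to Theorem \ref{thm:semitrivial}.
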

\begin{proof}
The characterization of $R_w$ is a direct consequence of the proposition  \ref{prop:Fw} and the lemma \ref{lem:gw}. The uniqueness follows from the fact that $r\mapsto g_w-F_w$ is increasing on $D_w$.
\end{proof}

Now, let us study the case of the coexistence stationary equilibrium.
From the proposition \ref{prop:Fw}, if there exists a coexistence solution, that is a  positive solution $(U_c,V_c)$ to \eqref{eq:station2}, then there
exists $R_c^1\in D_u\cap D_v$ such that 
\[
 F_u(R^1_c) = F_v(R^1_c)=R^2_c.
\]
According to remark \ref{rem:F}, we obtain the following lemma.
\begin{lem}\label{prop:nbr} Suppose that $F_u\neq F_v$. Then there are at most two coexistence stationary equilibrium for the gradostat.
\end{lem}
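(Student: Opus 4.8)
The plan is to exploit Remark \ref{rem:F}, which tells us that each $F_w$ is a homographic (Möbius) function: there are constants $(m_w^1,m_w^2,m_w^3,m_w^4)$ with $F_w(x)=\frac{m_w^1x+m_w^2}{m_w^3x+m_w^4}$. A coexistence equilibrium forces $R_c^1\in D_u\cap D_v$ with $F_u(R_c^1)=F_v(R_c^1)$, so the number of coexistence equilibria is bounded by the number of solutions in $D_u\cap D_v$ of the equation $F_u(x)=F_v(x)$. The whole game is therefore to count the roots of $F_u-F_v$.

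First I would clear denominators: on the common domain $D_u\cap D_v$ (where both denominators $m_w^3x+m_w^4$ are nonzero, since $F_w$ is well-defined and finite there — indeed $F_w$ is smooth and strictly decreasing on $D_w$ by Remark \ref{rem:F}), the equation $F_u(x)=F_v(x)$ is equivalent to
\[
 (m_u^1x+m_u^2)(m_v^3x+m_v^4)=(m_v^1x+m_v^2)(m_u^3x+m_u^4).
\]
This is a polynomial identity of degree at most two in $x$. The key algebraic point is that the coefficient of $x^2$ is $m_u^1m_v^3-m_v^1m_u^3$, so either this coefficient is nonzero — in which case the polynomial has at most two real roots, hence at most two solutions in $D_u\cap D_v$ — or it vanishes, in which case the equation is affine (degree $\le 1$) and has at most one solution, unless it degenerates to $0=0$. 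The degenerate case $0=0$ happens exactly when the two homographies coincide as functions, i.e. $F_u=F_v$; this is precisely the case excluded by the hypothesis $F_u\neq F_v$. So under that hypothesis the polynomial $F_u-F_v$ (after clearing denominators) is a nonzero polynomial of degree at most two, hence has at most two real roots, hence there are at most two values $R_c^1$, each of which determines $R_c^2=F_u(R_c^1)$ and then $(U_c,V_c)$ uniquely via the eigenvector formula \eqref{eq:eigenvector} applied to both species. This yields at most two coexistence stationary equilibria.

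I expect the only subtlety — and the thing worth stating carefully rather than the root-counting itself, which is routine — is the legitimacy of clearing denominators: one must note that on $D_u\cap D_v$ the denominators of $F_u$ and $F_v$ do not vanish (otherwise $F_w$ would blow up, contradicting that it is a finite decreasing function on $D_w$), so multiplying through is an equivalence and introduces no spurious roots. Once that is observed, the bound follows immediately from the fundamental theorem of algebra applied to a degree-$\le 2$ polynomial, together with the observation that "identically zero after clearing denominators" is equivalent to $F_u\equiv F_v$, which the hypothesis rules out.
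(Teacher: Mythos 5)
Your proof is correct and follows exactly the paper's route: the lemma is deduced directly from Remark \ref{rem:F}, i.e.\ from the fact that each $F_w$ is a homographic function, so that $F_u=F_v$ reduces after clearing (nonvanishing) denominators to a not-identically-zero polynomial equation of degree at most two. You simply spell out the root-counting and non-degeneracy details that the paper leaves implicit.
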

There are at most two intersections between the curves of $F_1$ and $F_2$ but these intersections are not necessarily associated to a positive solution of \eqref{eq:station2}. Indeed, if $F_u(R^1) = F_v(R^1)$ then the coefficients of the 
eigenvectors are not necessarily of the same signs.
%

The following proposition gives a good location for an intersection between the curves of $F_u$ and $F_v$ to be associated with an 
admissible stationary equilibrium solution of \eqref{eq:station2}.

\begin{prop}\label{prop:loca}
Let $R_c$ be an intersection between the curves of $F_u$ and $F_v$. $R_c$ is associated to an admissible coexistence stationary equilibrium if and only if:
\[
 \left( R_u^1-R_v^1 \right)\left( R_u^2-R_v^2 \right) <0,
\]
and $R_c$ is in the rectangle $K$ defined as:
\[
K = [\min(R_u^1,R_v^2),\max(R_u^1,R_v^1)]\times[\min(R_u^2,R_v^1),\max(R_u^1,R_v^2)].
\]
\end{prop}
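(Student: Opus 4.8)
The plan is to characterize when an intersection point $R_c = (R_c^1, R_c^2)$ of the curves $\mathcal{C}_u$ and $\mathcal{C}_v$ corresponds to a genuine (componentwise positive) solution $(U_c, V_c)$ of \eqref{eq:station2}. First I would recall from \eqref{eq:eigenvector} that if $(U_c,V_c)$ is an admissible coexistence equilibrium with resource concentration $R_c = R_0 - U_c - V_c$, then $U_c$ must be a positive multiple of $\bigl(\lambda^1,\ -(X_u^1(R_c^1)-\lambda^1)\bigr)^{\top}$ and $V_c$ a positive multiple of $\bigl(\lambda^1,\ -(X_v^1(R_c^1)-\lambda^1)\bigr)^{\top}$. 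Since $\lambda^1>0$ and, on $D_u\cap D_v$, we have $X_u^1(R_c^1)-\lambda^1<0$ and $X_v^1(R_c^1)-\lambda^1<0$, both candidate eigenvectors already have strictly positive entries; so positivity of $U_c$ and $V_c$ individually is automatic once $R_c^1\in D_u\cap D_v$. The real constraint is that $U_c + V_c = R_0 - R_c$ must hold \emph{componentwise} with $U_c, V_c$ being these specific eigenvectors scaled by positive constants $\mu_u, \mu_v$.

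The key computation is therefore to write $U_c + V_c = R_0 - R_c$ as a linear system in $(\mu_u,\mu_v)$:
\[
\mu_u \lambda^1 + \mu_v \lambda^1 = R_0 - R_c^1, \qquad
-\mu_u\bigl(X_u^1(R_c^1)-\lambda^1\bigr) - \mu_v\bigl(X_v^1(R_c^1)-\lambda^1\bigr) = R_0 - R_c^2.
\]
Solving this $2\times 2$ system by Cramer's rule gives $\mu_u$ and $\mu_v$ as ratios whose common denominator is $\lambda^1\bigl(X_v^1(R_c^1) - X_u^1(R_c^1)\bigr)$, and I would express the numerators in terms of $R_0 - R_c^1$, $R_0 - R_c^2$, and the quantities $X_w^1(R_c^1)-\lambda^1$. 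The point is to show that $\mu_u>0$ and $\mu_v>0$ simultaneously is equivalent to the stated sign and localization conditions. To connect this with the coordinates $R_u^j$ and $R_v^j$ of the semi-trivial equilibria, I would use that, by Proposition~\ref{prop:survival}, $R_w^2 = g_w(R_w^1)$, and more usefully that $g_w(r) = R_0 + \frac{1}{\lambda^1}(R_0-r)(X_w^1(r)-\lambda^1)$; in particular the sign of $R_0 - R_c^2 - g_w(R_c^1) + \text{(something)}$ can be rephrased via $g_w$ evaluated at $R_c^1$. Since $F_w$ is strictly decreasing and $g_w$ is strictly increasing (Remark~\ref{rem:F}, Lemma~\ref{lem:gw}), the relative vertical position of $R_c$ with respect to the points $R_u$ and $R_v$ on the two curves is controlled, and I expect $\mu_u>0$ to translate into $R_c^1$ lying between $R_u^1$ and $R_v^1$ (equivalently $R_c$ lying in the horizontal extent of $K$) together with the sign condition $(R_u^1-R_v^1)(R_u^2-R_v^2)<0$, and symmetrically for $\mu_v>0$ with the vertical extent.

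A cleaner way to organize the sign bookkeeping, which I would try first, is to note that along $\mathcal{C}_u$ the semi-trivial point $R_u$ is exactly the point where $F_u$ meets $g_u$, and that $R_0 - R = W$ at $E_u$; so $R_0 - R_c$ compared to $R_0 - R_u$ componentwise is governed by whether $R_c$ is "above" or "below" $R_u$ along the decreasing curve $\mathcal{C}_u$. Because both $F_u$ and $F_v$ are decreasing, their intersection $R_c$ satisfies $R_c^1$ between $R_u^1, R_v^1$ iff $R_c^2$ between $R_u^2, R_v^2$ in the \emph{reversed} order, which is precisely the content of the sign condition $(R_u^1 - R_v^1)(R_u^2 - R_v^2)<0$ — this guarantees the curves actually cross in the relevant region rather than one lying entirely above the other. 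Then the localization in $K$ is the statement that the crossing happens between the two semi-trivial endpoints, which is what makes the scalars $\mu_u, \mu_v$ positive.

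The main obstacle I anticipate is the sign analysis of the Cramer determinants: one has to be careful that $R_c^1 \in D_u \cap D_v$ (so both $X_w^1(R_c^1) - \lambda^1 < 0$), track the sign of $X_v^1(R_c^1) - X_u^1(R_c^1)$ (which is nonzero since $F_u \neq F_v$ forces $X_u^1 \neq X_v^1$ at the intersection, else the curves would coincide there), and then verify that the "if and only if" genuinely goes both ways — i.e., that every point of $K$ on both curves does yield positive $\mu_u,\mu_v$, not merely that admissible equilibria must lie in $K$. I would handle the converse by running the Cramer computation backwards: given $R_c$ in $K$ on both curves with the sign condition, define $\mu_u, \mu_v$ by the formulas, check they are positive, set $U_c, V_c$ via \eqref{eq:eigenvector}, and verify directly that $A_u(R_c)U_c = 0$, $A_v(R_c)V_c = 0$ (automatic since $R_c \in \mathcal{C}_u \cap \mathcal{C}_v$) and $U_c + V_c = R_0 - R_c$ (by construction), so that $(U_c, V_c)$ solves \eqref{eq:station2} with all components positive.
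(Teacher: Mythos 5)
Your proposal is correct and follows essentially the same route as the paper: solving $U_c+V_c=R_0-R_c$ for $(\mu_u,\mu_v)$ by Cramer's rule reproduces exactly the paper's coefficients $\mu_w=\bigl(g_{\overline{w}}(R_c^1)-R_c^2\bigr)/\bigl(X_{\overline{w}}^1(R_c^1)-X_w^1(R_c^1)\bigr)$, and your translation of $\mu_u>0,\ \mu_v>0$ into $R_c^2$ lying between $g_u(R_c^1)$ and $g_v(R_c^1)$, combined with the monotonicity of the decreasing $F_w$ and increasing $g_w$, is precisely the paper's argument. The residual sign bookkeeping you flag as the main obstacle is left at the same level of detail in the paper itself (``it can be checked that $\Theta=\varnothing$''), so nothing essential is missing from your plan.
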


\begin{proof}
Let us define, for each semi-trivial equilibrium the following sets of $[0,R_0]^2$:
\[
 K_w = \{ (R^1,R^2)\in[0,R_0]^2,\quad \left( R_w^1-R^1 \right)\left( R_w^2-R^2 \right) <0 \}.
\]



We first prove that any intersection $R_c$ between the curves of $F_u$ and $F_v$ is in $K_u \cap K_v$. Recall that $R_w$ is the
associated resource concentration for the stationary equilibrium $E_w$.
According to \eqref{Usurvie}, $R_c$ is associated to a stationary coexistence equilibrium only if $\det(A_u(R_c))=0$ and $\det(A_v(R_c))=0$.
But we also know that $\det(A_u(R_u))=0$ and $\det(A_v(R_v))=0$ which finally implies that:
\begin{align*}
 &\left( X_u^1(R_c^1)-\lambda^1 \right)\left( X_u^2(R_c^2)-\lambda^2 \right) = \left( X_u^1(R_u^1)-\lambda^1 \right)\left( X_u^2(R_u^2)-\lambda^2 \right),\\
 &\left( X_v^1(R_c^1)-\lambda^1 \right)\left( X_v^2(R_c^2)-\lambda^2 \right) = \left( X_v^1(R_v^1)-\lambda^1 \right)\left( X_v^2(R_v^2)-\lambda^2 \right).
\end{align*}
The fact that the functions $X_w^j(R^j)-\lambda^j$ are increasing gives us that necessarily $R_c\in K_u \cap K_v$.

From the equation \eqref{eq:eigenvector} coupled to the fact that $R_c = R_0 -U_c -V_c$, it comes that the values of the 
concentration $(U_c,V_c)$ associated to $R_c$ are given by:
\begin{equation}\label{eq:coeff}
 U_c = \mu_u \begin{pmatrix}
          \lambda^1\\
          -(X_u^1(R_c^1)-\lambda^1)
         \end{pmatrix} \text{ and }
         V_c = \mu_v \begin{pmatrix}
          \lambda^1\\
          -(X_v^1(R_c^1)-\lambda^1)
         \end{pmatrix}
\end{equation}
where the coefficients $\mu_u$ and $\mu_v$ are given by:
\[
 \mu_w = \frac{1}{X_{\overline{w}}^1(R_c^1)-X_w^1(R_c^1)}\left( g_{\overline{w}}(R_c^1)-R_c^2 \right).
\]
We know that $X_w^1(R_c^1)-\lambda^1<0$ for each $i$. As a consequence, $(U_c,V_c)$ is an admissible coexistence stationary equilibrium
if and only if $\mu_u>0$ and $\mu_v>0$. Hence, if $R_c$ is associated to an admissible coexistence stationary equilbrium, we have:
\[
 \min\left( g_u(R_c^1), g_v(R_c^1) \right) \leq R_c^2 \leq \max\left( g_u(R_c^1), g_v(R_c^1) \right).
\]
Consequently, $R_c$ is associated to an admissible equilibrium if and only if,
\begin{equation}\label{eq:cond}
 R_c \in \Theta = K_u \cap K_v \cap \left\{ (R^1,R^2)\in[0,R_0]^2, \quad \min\left( g_u(R^1), g_v(R^1) \right) \leq R^2 \leq \max\left( g_u(R^1), g_v(R^1) \right) \right\}
\end{equation}
Recall that the functions $g_w$ are defined by:
\[
 g_w(R) = R_0 + (R_0-R)\frac{X_w^1(R)-\lambda^1}{\lambda^1}.
\]
We just saw that if $R_c$ is associated to an admissible coexistence stationary equilibrium, then $R_c\in \Theta$ 
(it is the condition \eqref{eq:cond}). Consequently, properties on the functions $g_w$ allows the following statements:

If $\left( R_u^1-R_v^1 \right)\left( R_u^2-R_v^2 \right) >0$, it can be checked that $\Theta = \varnothing$, implying that $R_c$
does not exist.


If $\left( R_u^1-R_v^1 \right)\left( R_u^2-R_v^2 \right) <0$, then $\Theta \subset K$ where $K$ is the rectangle defined by:
\[
K = [\min(R_u^1,R_v^2),\max(R_u^1,R_v^1)]\times[\min(R_u^2,R_v^1),\max(R_u^1,R_v^2)].
\]


\end{proof}

\begin{cor}\label{cor:signe}
Assume that $R_c$ is associated to an admissible coexistence stationary equilibrium. Then:
\[
 R_u^1 < R_v^1 \Leftrightarrow X_u^1(R_c^1) > X_v^1(R_c^1).
\]
\end{cor}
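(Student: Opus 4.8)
The plan is to extract everything from the sign analysis already carried out in the proof of Proposition \ref{prop:loca}. Recall from that proof that if $R_c=(R_c^1,R_c^2)$ is the resource concentration of an \emph{admissible} coexistence equilibrium, then the associated concentrations are the positive multiples of the Perron eigenvectors $U_c=\mu_u(\lambda^1,-(X_u^1(R_c^1)-\lambda^1))$ and $V_c=\mu_v(\lambda^1,-(X_v^1(R_c^1)-\lambda^1))$, with
\[
 \mu_u=\frac{g_v(R_c^1)-R_c^2}{X_v^1(R_c^1)-X_u^1(R_c^1)},\qquad \mu_v=\frac{g_u(R_c^1)-R_c^2}{X_u^1(R_c^1)-X_v^1(R_c^1)},
\]
and admissibility is exactly the requirement $\mu_u>0$ and $\mu_v>0$. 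First I would observe that for these expressions to be meaningful (finite) one needs $X_u^1(R_c^1)\neq X_v^1(R_c^1)$, so for an admissible $R_c$ exactly one of the two alternatives $X_u^1(R_c^1)>X_v^1(R_c^1)$ or $X_u^1(R_c^1)<X_v^1(R_c^1)$ occurs.

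Next, suppose $X_u^1(R_c^1)>X_v^1(R_c^1)$. Then the denominator of $\mu_u$ is negative and that of $\mu_v$ is positive, so $\mu_u>0$ and $\mu_v>0$ force
\[
 g_v(R_c^1)<R_c^2<g_u(R_c^1).
\]
Since $R_c$ lies on the graphs of both $F_u$ and $F_v$ (it is by definition an intersection point), we have $R_c^2=F_u(R_c^1)=F_v(R_c^1)$, hence $(g_u-F_u)(R_c^1)>0$ and $(g_v-F_v)(R_c^1)<0$. The key structural input is that, by Lemma \ref{lem:gw} and Remark \ref{rem:F}, the map $g_w-F_w$ is strictly increasing on $D_w$ and vanishes precisely at $R_w^1$ (this is exactly how uniqueness of the semi-trivial equilibrium was obtained in Proposition \ref{prop:survival}). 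Applying this with $w=u$ gives $R_c^1>R_u^1$, and with $w=v$ gives $R_c^1<R_v^1$; in particular $R_u^1<R_v^1$.

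The opposite case $X_u^1(R_c^1)<X_v^1(R_c^1)$ is entirely symmetric (swap the roles of $u$ and $v$, which reverses both denominators) and yields $R_v^1<R_c^1<R_u^1$, hence $R_u^1>R_v^1$. Since Proposition \ref{prop:loca} guarantees $R_u^1\neq R_v^1$ for an admissible coexistence equilibrium, the two implications combine into the stated equivalence $R_u^1<R_v^1\Leftrightarrow X_u^1(R_c^1)>X_v^1(R_c^1)$. I do not expect a genuine obstacle here: the argument is essentially bookkeeping on top of the proof of Proposition \ref{prop:loca}, and the only point requiring slight care is excluding the degenerate configuration $X_u^1(R_c^1)=X_v^1(R_c^1)$ — which is automatic, since there the expressions for $\mu_u,\mu_v$ (and hence the admissibility criterion itself) would be undefined.
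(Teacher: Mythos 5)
Your proof is correct and follows essentially the same route as the paper's: both arguments hinge on the sign of the denominators $X_{\overline{w}}^1(R_c^1)-X_w^1(R_c^1)$ in the expression for $\mu_u,\mu_v$, combined with the monotonicity of $g_w$ and $F_w$ and the identities $g_w(R_w^1)=F_w(R_w^1)=R_w^2$. You merely run the implication in the reverse direction (deducing $R_u^1<R_c^1<R_v^1$ from the sign of $X_u^1(R_c^1)-X_v^1(R_c^1)$ via the strictly increasing function $g_w-F_w$), which is a harmless — and, given the typos in the paper's own bookkeeping, arguably cleaner — reorganization of the same argument.
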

\begin{proof}
Assume that $R_u^1 < R_v^2$. Proposition \ref{prop:loca} implies that $R_u^2 > R_v^2$. The functions $g_w$ are increasing on
the set $[R_u^1 , R_v^2]$ and $g_u(R_1^1) > g_v(R_2^1)$ because $g_w(R_w^1) = R_w^2$. As a consequence,
\[
 g_v(R_c^1) < R_c^2 < g_u(R_c^1).
\]
In the proof of the proposition \ref{prop:loca}, we calculated the coexistence stationary equilibrium associated to $R_c$ and found 
out that $U_c$ and $V_c$ satisfy \eqref{eq:eigenvector} where
\[
 \mu_w = \frac{1}{X_{\overline{w}}^1(R_c^1)-X_w^1(R_c^1)}\left( g_{\overline{w}}(R_c^1)-R_c^2 \right).
\]
Since $U_c >0$ and $V_c>0$, we have 
$\mu_u>0$ and $\mu_v>0$  which yields $X_u^1(R_c^1) > X_v^1(R_c^1)$.
\end{proof}

To summarize, we can tell if an intersection $R_c$ between the curves of $F_u$ and $F_v$ is associated to an admissible coexistence
stationary equilibrium. Now, we state a criteria
for the existence of coexistence stationary equilibrium according to the stability of the semi-trivial equilibrium $E_u$ and $E_v$.

\begin{prop}\label{prop:stab}
The semi-trivial equilibrium $E_w$ is stable if and only if $F_{\overline{w}}(R_w^1)> R_w^2$.
\end{prop}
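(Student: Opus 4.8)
The plan is to reduce the linear stability of $E_w$ to the spectral abscissa of a single $2\times2$ matrix, namely $M_{\overline w}(R_w)$, and then to recognise that the curve $R^2=F_{\overline w}(R^1)$ is exactly its neutral-stability locus. Throughout take $w=u$ (so $\overline w=v$), the other case being symmetric, and set $\Sigma=R_0$ as in \eqref{eq:chem_deter}. Write $E_u=(U,0)$ with $U^1,U^2>0$, and $R_u=R_0-U$. Linearising the right-hand side of \eqref{eq:chem_deter} at $E_u$: the $V$-component carries the prefactor $\mathrm{diag}(V)$ in front of its reaction part, so its partial derivative in $U$ vanishes on $\{V=0\}$, and the Jacobian at $E_u$ is block upper-triangular,
\[
\begin{pmatrix} B & C\\ 0 & A_v(R_u)\end{pmatrix},
\]
where $B$ is the Jacobian of the one-species field $\cF_u$ of \eqref{eq:chem_deter_1} at $E_u$, $A_v(R_u)=M_v(R_u)$ (see \eqref{Mw}), and $C$ does not affect the spectrum since the matrix is block triangular. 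Hence $E_u$ is linearly stable if and only if $s(B)<0$ and $s(M_v(R_u))<0$, where $s(\cdot)$ denotes the spectral abscissa.

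Second, I would show that $s(B)<0$ holds whenever $E_u$ exists, which then identifies stability of $E_u$ with $s(M_v(R_u))<0$. Differentiating $\cF_u$ and using $A_u(R_u)\,U=0$, one gets $B=A_u(R_u)-\mathrm{diag}\big(U^1 f_u^{1\prime}(R_u^1),\,U^2 f_u^{2\prime}(R_u^2)\big)$. The matrix $A_u(R_u)$ is irreducible and Metzler (off-diagonal entries $\lambda^1,\lambda^2>0$) and admits the positive eigenvector $U$, so by Perron--Frobenius $0=s(A_u(R_u))$; subtracting the nonzero nonnegative diagonal matrix above (the $f_u^{j\prime}$ are positive since $f_u^j$ is increasing) strictly decreases the Perron root, hence $s(B)<0$. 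This is also implicit in the analysis behind Theorem \ref{thm:semitrivial}, cf.\ \cite{sw}.

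Third, I would carry out the elementary sign analysis of the irreducible Metzler $2\times2$ matrix $M_v(R_u)$, abbreviating its diagonal entries $a:=X_v^1(R_u^1)-\lambda^1$ and $d:=X_v^2(R_u^2)-\lambda^2$: its spectral abscissa is real, and $s(M_v(R_u))<0$ iff $\mathrm{tr}\,M_v(R_u)<0$ and $\det M_v(R_u)>0$, which in turn forces $a<0$, $d<0$ and $ad>\lambda^1\lambda^2$. Assuming $a<0$, i.e.\ $R_u^1\in D_v$, the definition $F_v=(X_v^2)^{-1}\circ g\circ X_v^1$ gives $X_v^2\big(F_v(R_u^1)\big)=g(X_v^1(R_u^1))=\lambda^2+\lambda^1\lambda^2/a$, and since $X_v^2$ is increasing,
\[
F_v(R_u^1)>R_u^2 \;\Longleftrightarrow\; \frac{\lambda^1\lambda^2}{a}>d \;\Longleftrightarrow\; ad>\lambda^1\lambda^2 \;\Longleftrightarrow\; \det M_v(R_u)>0,
\]
the middle step using $a<0$. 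Since $a<0$ and $ad>\lambda^1\lambda^2$ already force $d<0$, the trace/determinant criterion then gives $s(M_v(R_u))<0 \iff F_v(R_u^1)>R_u^2$, i.e.\ $E_u$ is stable iff $F_v(R_u^1)>R_u^2$.

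It remains to dispose of the case $R_u^1\notin D_v$, i.e.\ $a\ge0$: the same Metzler criterion gives $s(M_v(R_u))\ge0$, so $E_u$ is unstable, while $F_v(R_u^1)$ is undefined and the inequality $F_v(R_u^1)>R_u^2$ is vacuously false, so the equivalence still holds. The substantive steps are the Perron-root monotonicity argument for $s(B)<0$ and keeping careful track of the domain $D_{\overline w}$; everything else is the routine trace/determinant analysis of a $2\times2$ Metzler matrix, and I expect the Perron--Frobenius step to be the one requiring the most precise statement.
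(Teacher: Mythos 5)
Your proof is correct and follows essentially the same route as the paper: a block upper-triangular Jacobian at $E_w$, stability of the one-species block (which you get by Perron-root monotonicity where the paper does a direct trace/determinant computation), and reduction of $s(M_{\overline w}(R_w))<0$ to $\det M_{\overline w}(R_w)>0$, which translates into $F_{\overline w}(R_w^1)>R_w^2$ via the definition of $F_{\overline w}$. Your explicit treatment of the case $R_w^1\notin D_{\overline w}$ is a welcome detail the paper glosses over, but it does not change the argument.
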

\begin{proof}
The stability of $E_w$ can be read on the Jacobian of $H$ evaluated in $E_w$. For sake of simplicity we give the proof for $E_u$.
A straightforward computation gives:
\[
DH(U,0) = \begin{pmatrix}
               A & B\\
               0 & C
              \end{pmatrix}
\]
where,
\[
 A = \begin{pmatrix}
             X_u^1(R_u^1)-\lambda^1 -U^1f_u^{1\prime}(R_u^1) & \lambda^1\\
             \lambda^2 & X_u^2(R_u^2)-\lambda^2 -U^2f_u^{2\prime}(R_u^2)
            \end{pmatrix}
\]
and
\[
 C = \begin{pmatrix}
             X_v^1(R_u^1)-\lambda^1 & \lambda^1\\
             \lambda^2 & X_v^2(R_u^2)-\lambda^2 
            \end{pmatrix}.
\]
Using the facts that
\[
\left(X_u^1(R_u^1)-\lambda^1\right)\left(X_u^2(R_u^2)-\lambda^2\right) = \lambda^1\lambda^2,
\]
and $X_u^i(R_u^{i})-\lambda^i <0$, a simple computation shows that the real part of the eigenvectors of $A$ are negative.
As a consequence, $E_u$ is stable if and only if the eigenvectors of $C$ have negative real part which gives the announced inequality 
(recall that $F_v = (X_v^2)^{-1}\circ g \circ X_v^1$ and $g(x) = \lambda^2+\frac{\lambda^1\lambda^2 x}{x-\lambda^1}$).
\end{proof}
\subsubsection{Proof of the theorem \ref{thm:eq2}}
\begin{proof}
Let us assume that $R_u^1 < R_v^1$.
The existence of coexistence stationary equilibrium is a simple consequence of proposition \ref{prop:stab} and the intermediate value theorem.
Let us prove it if $E_u$ and $E_v$ are both stable, then according to proposition \ref{prop:stab}, $F_{\overline{w}}(R_w^1)> R_w^2$ for each $i$.
Since $R_w^2 = F_w(R_w^1)$, it comes that:
\[
 F_u(R_v^1) - F_v(R_v^1)>0 \text{ and } F_v(R_u^1) - F_u(R_u^1)>0.
\]
Hence, the intermediate value theorem implies that $F_u$ and $F_v$ have an odd number of intersections.
According to proposition \ref{prop:nbr}, there are at most two intersections between the curves of $F_u$ and $F_v$. As a consequence there
exists a unique $R_c^1 \in [R_u^1,R_v^1]$ such that $F_u(R_c^1) = F_v(R_c^1)$. Since the functions $F_w$ are decreasing, one
can check that $R_u^2 > R_v^2$ and that $R_c^2 \in [R_v^2,R_u^2]$. Hence, proposition \ref{prop:loca} implies that $R_c$
is associated to an admissible coexistence stationary equilibrium. Figure \ref{fig:caspossibles} comes as an illustration for this statement.

%

The stability of the coexistence stationary equilibrium is  more difficult to obtain. The jacobian of 
$H$ evaluated in $(U_c,V_c)$ reads:
\[
 DH(U_c,V_c) = \begin{pmatrix}
                    X_u^1-\lambda^1-\beta_u^1 & \lambda^1 & -\beta_u^1 & 0\\
                    \lambda^2 & X_u^2-\lambda^2-\beta_u^2 & 0 & -\beta_u^2\\
                    -\beta_v^1 & 0 & X_v^1-\lambda^1-\beta_v^1 & \lambda^1\\
                    0 & -\beta_v^2 & \lambda^2 & X_v^2-\lambda^2-\beta_v^2
                   \end{pmatrix}
\]
where:
\[
 X_w^j = f_w^j(R_c^j)-\delta^j<0 \text{ and } \beta_w^j = U_w^{j,c}f_w^{j\prime}(R_c^j)>0.
\]

Note that $DH(U_c,V_c)$ is an irreducible matrix and it can be written:
\[
 DH(U_c,V_c) = \begin{pmatrix}
                    A & B\\
                    C & D
                   \end{pmatrix},
\]
where $A$ and $D$ are irreducible square matrices with positive off diagonal elements and $B$ and $C$ are diagonal matrix with negative diagonal elements.

Let $s(DH(U_c,V_c))$ be the maximum real part of the eigenvalues of $DH(U_c,V_c)$.
Following \cite{sw}, we now use a very strong following property dealing with these kind of matrices (which can be found in \cite{lesTANKS}):
Defined
\[
 \overline{DH(U_c,V_c)}= \begin{pmatrix}
                              A & -B\\
                              -C & D
                             \end{pmatrix}.
\]
Then $s(DH(U_c,V_c))<0$ if and only if $(-1)^k d_k>0$ for $k\in\{1,2,3,4\}$, where $d_i$ is the $i$-th principal minor of $\overline{DH(U_c,V_c)}$.

As a consequence, the signs of $d_1$, $d_2$, $d_3$ and $d_4$ characterize the stability of $DH(U_c,V_c)$. Firstly, we have $d_1 = X_u^1-\lambda^1-\beta_u^1<0$.
Next, we have:
 $$d_2 = \begin{vmatrix}
        X_u^1-\lambda^1-\beta_u^1 & \lambda^1\\
        \lambda^2 & X_u^2-\lambda^2-\beta_u^2
       \end{vmatrix}= -\beta_u^1(X_u^2-\lambda^2)-\beta_u^2(X_u^1-\lambda^1)+\beta_u^1\beta_u^2 >0. $$

An other straightforward computation gives:
\begin{align*}
 d_3 &= \begin{vmatrix}
        X_u^1-\lambda^1-\beta_u^1 & \lambda^1 & \beta_u^1\\
        \lambda^2 & X_u^2-\lambda^2-\beta_u^2 & 0\\
        \beta_v^1 & 0 & X_v^1-\lambda^1-\beta_v^1
       \end{vmatrix}\\
     &= -\beta_u^1 \beta_v^1 (X_u^2-\lambda^2-\beta_u^2)+(X_v^1-\beta_v^1-\lambda^1)d_2\\
     &= -\beta_u^1(X_u^2-\lambda^2)(X_v^1-\lambda^1)-\beta_u^2(X_u^1-\lambda^1)(X_v^1-\lambda^1)+\beta_u^1\beta_u^2(X_v^1-\lambda^1)+\beta_u^2\beta_v^1(X_u^1-\lambda^1)<0
\end{align*}

Obtaining the sign of $d_4$ requires heavy computations. A straight computation, similar to the one in \cite{sw} gives:
\begin{lem}
\[
 d_4 = \mu_u\mu_v\lambda^1\lambda^2f_u^{2\prime}f_v^{1\prime}\frac{X_u^1-\lambda^1}{X_v^1-\lambda^1}\left( X_u^2-X_v^1 \right)\left( \frac{F_u'(R_c^1)}{F_v'(R_c^1)}-1 \right).
\]
\end{lem}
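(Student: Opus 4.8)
The statement is an explicit evaluation of the $4\times4$ determinant $d_4=\det\bigl(\overline{DH(U_c,V_c)}\bigr)$, and the plan is to compute it by brute force while feeding in the algebraic relations that hold at a coexistence point, along the lines of the analogous computation in \cite{sw}. Writing $B_w=\mathrm{diag}(\beta_w^1,\beta_w^2)$ for $w\in\{u,v\}$, the matrix reads
\[
\overline{DH(U_c,V_c)}=\begin{pmatrix} A_u(R_c)-B_u & B_u\\ B_v & A_v(R_c)-B_v\end{pmatrix},
\]
and I would expand it by the Laplace rule along its first two rows. This produces six products of a $2\times2$ minor in rows $1,2$ with the complementary minor in rows $3,4$: the columns $\{1,2\}$ contribute $d_2\cdot\det(A_v(R_c)-B_v)$, the columns $\{3,4\}$ contribute $\beta_u^1\beta_u^2\beta_v^1\beta_v^2$, and the four mixed column choices each produce a $2\times2$ minor that contains exactly one factor $\beta_w^j$, so that after this step $d_4$ is written as an explicit polynomial in the quantities $X_w^j$, $\lambda^i$ and $\beta_w^j$.

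The three inputs that collapse this polynomial are: (i) the defining relation of the curves $\mathcal C_w$, namely $\det A_w(R_c)=\bigl(X_w^1-\lambda^1\bigr)\bigl(X_w^2-\lambda^2\bigr)-\lambda^1\lambda^2=0$ for $w\in\{u,v\}$, which among other things lets one replace $X_w^2-\lambda^2$ by $\lambda^1\lambda^2/(X_w^1-\lambda^1)$ throughout and already forces every monomial of $d_4$ to be divisible by one of the $\beta_u^j$ and one of the $\beta_v^k$; (ii) the shape of the coexistence state in \eqref{eq:coeff}, which gives $\beta_u^1=\mu_u\lambda^1 f_u^{1\prime}(R_c^1)$, $\beta_u^2=\mu_u\bigl(\lambda^1-X_u^1\bigr)f_u^{2\prime}(R_c^2)$ and the analogues for $v$; and (iii) the slopes of $F_u$ and $F_v$ at $R_c^1$, obtained by implicit differentiation of \eqref{definitionF}, namely $F_w'(R_c^1)=-\lambda^1\lambda^2 f_w^{1\prime}(R_c^1)/\bigl[\bigl(X_w^1-\lambda^1\bigr)^2 f_w^{2\prime}(R_c^2)\bigr]$, whence $F_u'(R_c^1)/F_v'(R_c^1)=\bigl[f_u^{1\prime}f_v^{2\prime}\bigl(X_v^1-\lambda^1\bigr)^2\bigr]/\bigl[f_v^{1\prime}f_u^{2\prime}\bigl(X_u^1-\lambda^1\bigr)^2\bigr]$. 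After substituting (i) and (ii), the $\beta_u^1\beta_v^1$- and $\beta_u^2\beta_v^2$-proportional parts cancel, the $\beta_u^1\beta_v^2$- and $\beta_u^2\beta_v^1$-proportional parts survive, and collecting them and rewriting once more with (i) and (iii) gives exactly $\mu_u\mu_v\lambda^1\lambda^2 f_u^{2\prime}f_v^{1\prime}\,\frac{X_u^1-\lambda^1}{X_v^1-\lambda^1}\bigl(X_u^2-X_v^1\bigr)\bigl(F_u'(R_c^1)/F_v'(R_c^1)-1\bigr)$.

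The only genuine difficulty is the bookkeeping: the raw expansion has many terms and the cancellations materialize only once (i)--(iii) are used in the right order. Two features of the final answer serve as guide rails. First, the contributions proportional to $f_u^{1\prime}f_v^{1\prime}$ and to $f_u^{2\prime}f_v^{2\prime}$ must cancel identically, so that only the ``crossed'' products $f_u^{1\prime}f_v^{2\prime}$ and $f_u^{2\prime}f_v^{1\prime}$ remain. Second, $d_4$ must vanish when $X_u^2=X_v^1$ and when $F_u'(R_c^1)=F_v'(R_c^1)$ -- the latter being the tangency case in which $\mathcal C_u$ and $\mathcal C_v$ meet non--transversally and the coexistence equilibrium is degenerate. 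Once the lemma is proved, the sign of $d_4$ drops out: $\mu_u,\mu_v,\lambda^1,\lambda^2,f_u^{2\prime},f_v^{1\prime}$ are positive and $(X_u^1-\lambda^1)/(X_v^1-\lambda^1)>0$ on $D_u\cap D_v$, so $\mathrm{sign}(d_4)=\mathrm{sign}\bigl((X_u^2-X_v^1)(F_u'(R_c^1)/F_v'(R_c^1)-1)\bigr)$, which -- together with Corollary \ref{cor:signe} and Proposition \ref{prop:loca} -- is exactly what is needed to determine the stability of the coexistence equilibrium in Theorem \ref{thm:eq2}.
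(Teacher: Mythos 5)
Your proposal is correct and follows essentially the same route as the paper: a brute-force expansion of the $4\times4$ determinant (you organize it as a Laplace expansion along the first two rows, the paper expands along the last row), collapsing to the key intermediate identity $d_4=\beta_u^1\beta_v^2\bigl[(X_u^2-\lambda^2)(X_v^1-\lambda^1)-\lambda^1\lambda^2\bigr]+\beta_u^2\beta_v^1\bigl[(X_u^1-\lambda^1)(X_v^2-\lambda^2)-\lambda^1\lambda^2\bigr]$ via $\det A_w(R_c)=0$, and then substituting the eigenvector expressions for $\beta_w^j$ and the implicit derivative of $F_w$ exactly as in the paper. The only discrepancies are inherited from typos in the stated formula itself (the factor $X_u^2-X_v^1$ should read $X_u^1-X_v^1$, consistent with the subsequent use of Corollary \ref{cor:signe}), which affect neither your argument nor the sign analysis.
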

\begin{proof}
A straightforward computation gives:
\begin{align*}
d_4 &= \begin{vmatrix}
       X_u^1-\lambda^1-\beta_u^1 & \lambda^1 & \beta_u^1 & 0\\
       \lambda^2 & X_u^2-\lambda^2-\beta_u^2 & 0 & \beta_u^2\\
       \beta_v^1 & 0 & X_v^1-\lambda^1-\beta_v^1 & \lambda^1\\
       0 & \beta_v^2 & \lambda^2 & X_v^2-\lambda^2-\beta_v^2
      \end{vmatrix}\\
    &= \beta_v^2 D_1-\lambda^2 D_2+(X_v^2-\lambda^2-\beta_v^2)d_3
\end{align*}
Where,
\begin{align*}
 D_1 &= \begin{vmatrix}
         X_u^1-\lambda^1-\beta_u^1 & \beta_u^1 & 0\\
         \lambda^2 & 0 & \beta_u^2\\
         \beta_v^1 & X_v^1-\lambda^1-\beta_v^1 & \lambda^1
        \end{vmatrix}\\
     &= -\beta_u^2(X_u^1-\lambda^1)(X_v^1-\lambda^1)+\beta_u^2\beta_u^1(X_v^1-\lambda^1)+\beta_u^2\beta_v^1(X_u^1-\lambda^1)-\beta1^1\lambda^1\lambda^2
\end{align*}
and,
\begin{align*}
 D_2 &= \begin{vmatrix}
         X_u^1-\lambda^1-\beta_u^1 & \lambda^1 & 0\\
         \lambda^2 & X_u^2-\lambda^2-\beta_u^2 & \beta_u^2\\
         \beta_v^1 & 0 & \lambda^1
        \end{vmatrix}\\
     &= -\lambda^1\beta_u^1(X_u^2-\lambda^2)-\lambda^1\beta_u^2(X_u^1-\lambda^1)+\lambda^1\beta_u^1\beta_u^2+\lambda^1\beta_u^2\beta_v^1.
\end{align*}
By making good use of the relation $(X_w^1-\lambda^1)(X_w^2-\lambda^2)=\lambda^1\lambda^2$, one can check that:
\[
 d_4 = \beta_u^1\beta_v^2\left[\left(X_u^2-\lambda^2\right)\left(X_v^1-\lambda^1\right)-\lambda^1\lambda^2\right]+\beta_u^2\beta_v^1\left[\left(X_u^1-\lambda^1\right)\left(X_v^2-\lambda^2\right)-\lambda^1\lambda^2\right].
\]

From $(X_w^1-\lambda^1)(X_w^2-\lambda^2)=\lambda^1\lambda^2$,
we infer
%

$$d_4=		\left(X_v^1-X_u^1\right)\left( \beta_u^1\beta_v^2\left(X_u^2-\lambda^2\right) - \beta_u^2\beta_v^1\left(X_v^2-\lambda^2\right) \right).$$
Recall that $\beta_w^j = U_w^{j,c}f_w^{j\prime}(R_c^j)$. According to proposition \ref{eq:coeff},
\[
 W_c = \mu_w \begin{pmatrix}
                \lambda^1\\
                -(X_w^1-\lambda^1)
               \end{pmatrix}.
\]
and the coefficients $\mu_w$ are positive. From this relation comes that:
\[
 \beta_u^1\beta_v^2 = -\mu_u\mu_v(X_v^1-\lambda^1)f_u^{1\prime}(R_c^1)f_v^{2\prime}(R_c^2) \text{ and } \beta_u^2\beta_v^1 = -\mu_u\mu_v(X_u^1-\lambda^1)f_u^{2\prime}(R_c^2)f_v^{1\prime}(R_c^1).
\]
For the sake of simplicity we will note $f_w^{j\prime}$ for $f_w^{j\prime}(R_c^j)$. It comes:
\[
 d_4 = \mu_u\mu_v(X_u^1-X_v^1)\left( f_u^{1\prime}f_v^{2\prime}\left(X_u^2-\lambda^2\right)\left(X_v^1-\lambda^1\right) - f_u^{2\prime}f_v^{1\prime}\left(X_u^1-\lambda^1\right)\left(X_v^2-\lambda^2\right) \right).
\]
Using once again the relation $(X_w^1-\lambda^1)(X_w^2-\lambda^2)=\lambda^1\lambda^2$ gives:
\[
 d_4 = \mu_u\mu_v\lambda^1\lambda^2\frac{X_v^1-\lambda^1}{X_u^1-\lambda^1}f_u^{2\prime}f_v^{1\prime}\left( X_u^2-X_v^1 \right)\left( \frac{f_u^{1\prime}f_v^{2\prime}}{f_u^{2\prime}f_v^{1\prime}} - \left( \frac{X_u^1-\lambda^1}{X_v^1-\lambda^1} \right)^2 \right).
\]
We are going to express the derivatives of the functions $f_w^j$ using the functions $F_w$. It starts from a realtion we already proved:
\[
 (X_w^1(R^1)-\lambda^1)(X_w^2(R^2)-\lambda^2)=\lambda^1\lambda^2 \Leftrightarrow R^2 = F_w(R^1).
\]
It comes that:
\[
 (X_w^1(R^1)-\lambda^1)(X_w^2(F(R^1))-\lambda^2)=\lambda^1\lambda^2.
\]
Derivating by $R^1$ gives:
\[
 \frac{f_w^{1\prime}(R^1)}{f_w^{j\prime}(F_w(R^1))} = -F_w'(R^1)\frac{X_w^1(R^1)-\lambda^1}{X_w^2(F_w(R^1))-\lambda^2}.
\]
Since $R_c^2 = F_1(R_c^1) = F_2(R_c^1)$ it comes that:
\[
 \frac{f_u^{1\prime}f_v^{2\prime}}{f_u^{2\prime}f_v^{1\prime}} = \frac{F_1'(R_c^1)}{F_2'(R_c^1)} \left( \frac{X_u^1-\lambda^1}{X_v^1-\lambda^1} \right)^2.
\]
Hence,
\[
 d_4 = \mu_u\mu_v\lambda^1\lambda^2f_u^{2\prime}f_v^{1\prime}\frac{X_u^1-\lambda^1}{X_v^1-\lambda^1}\left( X_u^2-X_v^1 \right)\left( \frac{F_u'(R_c^1)}{F_v'(R_c^1)}-1 \right).
\]
\end{proof}
As a direct consequence, the sign of $d_4$ is given by the sign of the quantity:
\[
 \text{sign}(d_4) = \left( X_u^2-X_v^1 \right)\left( \frac{F_u'(R_c^1)}{F_v'(R_c^1)}-1 \right).
\]
Moreover corollary \ref{cor:signe} gives us a better understanding of this sign:
\[
 \text{sign}(d_4) = \left( R_v^1-R_u^1 \right)\left( \frac{F_u'(R_c^1)}{F_v'(R_c^1)}-1 \right).
\]
Let us assume that $R_v^1-R_u^1>0$ (the proof is the same if we suppose that $R_v^1-R_u^1<0$). We will now show how the stability of the semi-trivial equilibrium $E_u$ and $E_v$ influence the 
stability of the coexistence stationary equilibrium when it exists.

If $E_u$ and $E_v$ are stable, then according to proposition \ref{prop:stab}, we have:
\[
 F_u(R_v^1) - F_v(R_v^1)>0 \text{ and } F_v(R_u^1) - F_u(R_u^1)>0.
\]
And we already know that there exists a unique intersection between the curves of $F_u$ and $F_v$ in the interval $[R_u^1,R_v^1]$.
A simple analytic consequence of these facts is that $F_v'(R_c^1) < F_u'(R_c^1)$ and since the functions $F_w$ are decreasing it comes that:
\[
 \frac{F_u'(R_c^1)}{F_v'(R_c^1)}-1 <0.
\]
Thus $d_4<0$ which implies that the unique coexistence equilibrium is unstable. 

This reasoning also proves the stability property of the coexistence stationary equilibrium in the other cases which concludes the proof.
\end{proof}

\bibliographystyle{abbrv}
\bibliography{Chemostat}

\begin{thebibliography}{10}

\bibitem{AM1980}
R.~A. Armstrong and R.~McGehee.
\newblock Competitive exclusion.
\newblock {\em The American Naturalist}, 115(2):151--170, 1980.

\bibitem{lv}
M.~Bena\"\i~m and C.~Lobry.
\newblock Lotka-{V}olterra with randomly fluctuating environments or ``{H}ow
  switching between beneficial environments can make survival harder''.
\newblock {\em Ann. Appl. Probab.}, 26(6):3754--3785, 2016.

\bibitem{mal1}
M.~Bena{\"{\i}}m, S.~{Le Borgne}, F.~Malrieu, and P.-A. Zitt.
\newblock {On the stability of planar randomly switched systems}.
\newblock {\em Ann. Appl. Probab.}, 24(1):292--311, 2014.

\bibitem{lesTANKS}
A.~Berman and R.~Plemmons.
\newblock {\em Nonnegative Matrices in the Mathematical Sciences}.
\newblock Society for Industrial and Applied Mathematics, 1994.

\bibitem{Butler1985}
G.~Butler, S.~Hsu, and P.~Waltman.
\newblock A mathematical model of the chemostat with periodic washout rate.
\newblock {\em SIAM Journal on Applied Mathematics}, 45(3):435--449, 1985.

\bibitem{CAMPILLO20112676}
F.~Campillo, M.~Joannides, and I.~Larramendy-Valverde.
\newblock Stochastic modeling of the chemostat.
\newblock {\em Ecological Modelling}, 222(15):2676 -- 2689, 2011.

\bibitem{sten1}
F.~Castella and S.~Madec.
\newblock {Coexistence phenomena and global bifurcation structure in a
  chemostat-like model with species-dependent diffusion rates}.
\newblock {\em Journal of Mathematical Biology}, 68(1):377--415, Jan 2014.

\bibitem{sten2}
F.~Castella, S.~Madec, and Y.~Lagadeuc.
\newblock {Global behavior of N competing species with strong diffusion:
  diffusion leads to exclusion}.
\newblock {\em Applicable Analysis}, 95(2):341--372, 2016.

\bibitem{eco1}
P.~Chesson.
\newblock {Mechanisms of Maintenance of Species Diversity}.
\newblock {\em Annual Review of Ecology and Systematics}, 31(1):343--366, 2000.

\bibitem{eco2}
P.~L. Chesson and R.~R. Warner.
\newblock {Environmental Variability Promotes Coexistence in Lottery
  Competitive Systems}.
\newblock {\em The American Naturalist}, 117(6):923--943, 1981.

\bibitem{grover2001}
C.~T. Codeço, J.~P. Grover, and A.~E. D.~L. DeAngelis.
\newblock Competition along a spatial gradient of resource supply: A microbial
  experimental model.
\newblock {\em The American Naturalist}, 157(3):300--315, 2001.

\bibitem{davis}
M.~H.~A. Davis.
\newblock {Piecewise-deterministic {M}arkov processes: a general class of
  nondiffusion stochastic models}.
\newblock {\em J. Roy. Statist. Soc. Ser. B}, 46(3):353--388, 1984.
\newblock With discussion.

\bibitem{grad3}
A.~Gaki, A.~Theodorou, D.~V. Vayenas, and S.~Pavlou.
\newblock {Complex dynamics of microbial competition in the gradostat}.
\newblock {\em Journal of Biotechnology}, 139(1):38--46, 2009.

\bibitem{Rapaport2008}
B.~Haegeman and A.~Rapaport.
\newblock How flocculation can explain coexistence in the chemostat.
\newblock {\em Journal of Biological Dynamics}, 2(1):1--13, 2008.
\newblock PMID: 22876841.

\bibitem{Hansen1491}
S.~Hansen and S.~Hubbell.
\newblock Single-nutrient microbial competition: qualitative agreement between
  experimental and theoretically forecast outcomes.
\newblock {\em Science}, 207(4438):1491--1493, 1980.

\bibitem{grad5}
J.~Hofbauer and J.~W.-H. So.
\newblock {Competition in the gradostat: the global stability problem}.
\newblock {\em Nonlinear Analysis: Theory, Methods \& Applications},
  22(8):1017--1031, 1994.

\bibitem{HsuWalt1993}
S.~Hsu and P.~Waltman.
\newblock On a system of reaction-diffusion equations arising from competition
  in an unstirred chemostat.
\newblock {\em SIAM Journal on Applied Mathematics}, 53(4):1026--1044, 1993.

\bibitem{chem1}
S.~B. Hsu.
\newblock Limiting behavior for competing species.
\newblock {\em SIAM J. Appl. Math.}, 34(4):760--763, 1978.

\bibitem{chem2}
S.~B. Hsu, S.~Hubbell, and P.~Waltman.
\newblock A mathematical theory for single-nutrient competition in continuous
  cultures of micro-organisms.
\newblock {\em SIAM J. Appl. Math.}, 32(2):366--383, 1977.

\bibitem{plancton1}
G.~E. Hutchinson.
\newblock The paradox of the plankton.
\newblock {\em The American Naturalist}, 95(882):137--145, 1961.

\bibitem{JSTW}
W.~Jäger, J.~W.-H. So, B.~Tang, and P.~Waltman.
\newblock Competition in the gradostat.
\newblock {\em Journal of Mathematical Biology}, 1987.

\bibitem{LENAS199461}
P.~Lenas and S.~Pavlou.
\newblock Periodic, quasi-periodic, and chaotic coexistence of two competing
  microbial populations in a periodically operated chemostat.
\newblock {\em Mathematical Biosciences}, 121(1):61 -- 110, 1994.

\bibitem{Loreau1}
M.~Loreau, N.~Mouquet, and A.~Gonzalez.
\newblock Biodiversity as spatial insurance in heterogeneous landscapes.
\newblock {\em Proceedings of the National Academy of Sciences},
  100(22):12765--12770, 2003.

\bibitem{grad1}
R.~Lovitt and J.~Wimpenny.
\newblock The gradostat: a bidirectional compound chemostat and its application
  in microbiological research.
\newblock {\em J Gen Microbiol.}, 1981.

\bibitem{mz}
F.~Malrieu and P.-A. Zitt.
\newblock On the persistence regime for lotka-volterra in randomly fluctuating
  environments.
\newblock {\em Preprint}, 2016.

\bibitem{droop1973}
D.~M. R.
\newblock Some thoughts on nutrient limitation in algae.
\newblock {\em Journal of Phycology}, 9(3):264--272, 1973.

\bibitem{grad4}
A.~Rapaport.
\newblock {Some non-intuitive properties of simple extensions of the chemostat
  model}.
\newblock {\em Ecological Complexity}, 2017.

\bibitem{rapaport2010effects}
A.~Rapaport and I.~Haidar.
\newblock Effects of spatial structure and diffusion on the performances of the
  chemostat.
\newblock {\em Mathematical Biosciences and Engineering}, 8(4):953--971, 2011.

\bibitem{plancton2}
S.~Roy and J.~Chattopadhyay.
\newblock Towards a resolution of ‘the paradox of the plankton’: A brief
  overview of the proposed mechanisms.
\newblock {\em Ecological Complexity}, 4(1):26 -- 33, 2007.

\bibitem{SW2}
H.~Smith and P.~Waltman.
\newblock {The gradostat: A model of competition along a nutrient gradient}.
\newblock {\em Microbial Ecology}, 22(1):207--226, 1991.

\bibitem{sw}
H.~Smith and P.~Waltman.
\newblock {\em The Theory of the Chemostat Dynamics of Microbial Competition}.
\newblock 1995.

\bibitem{detereco}
H.~Smith and P.~Waltman.
\newblock {Competition in the periodic gradostat}.
\newblock {\em Nonlinear Analysis: Real World Applications}, 1(1):177--188,
  2000.

\bibitem{STW}
H.~L. Smith, B.~Tang, and P.~Waltman.
\newblock Competition in an n-vessel gradostat.
\newblock {\em SIAM Journal on Applied Mathematics}, 51(5):1451--1471, 1991.

\bibitem{StrickNaim}
E.~Strickler and M.~Benaim.
\newblock Random switching between vector fields having a common zero.
\newblock {\em Preprint}, 2017.

\bibitem{WADE201664}
M.~Wade, J.~Harmand, B.~Benyahia, T.~Bouchez, S.~Chaillou, B.~Cloez, J.-J.
  Godon, B.~M. Boudjemaa, A.~Rapaport, T.~Sari, R.~Arditi, and C.~Lobry.
\newblock Perspectives in mathematical modelling for microbial ecology.
\newblock {\em Ecological Modelling}, 321:64 -- 74, 2016.

\bibitem{WANG2018341}
L.~Wang and D.~Jiang.
\newblock Ergodic property of the chemostat: A stochastic model under regime
  switching and with general response function.
\newblock {\em Nonlinear Analysis: Hybrid Systems}, 27:341 -- 352, 2018.

\bibitem{WX97}
G.~Wolkowicz and H.~Xia.
\newblock Global asymptotic behavior of a chemostat model with discrete delays.
\newblock {\em SIAM Journal on Applied Mathematics}, 57(4):1019--1043, 1997.

\end{thebibliography}
\end{document}